\newcounter{notecount}
\renewcommand{\ge}{\geqslant}
\renewcommand{\leq}{\leqslant}
\renewcommand{\geq}{\geqslant}
\newcommand{\vertiii}[1]{{\left\vert\kern-0.25ex\left\vert\kern-0.25ex\left\vert #1 
		\right\vert\kern-0.25ex\right\vert\kern-0.25ex\right\vert}}
\newcommand{\Lnorm}[2]{\left(#1, #2\right)}
\newcommand{\DGnorm}[2]{a\left(#1, #2\right)}
\newcommand{\llnorm}[1]{\big\lVert #1\big\rVert_{L_2(\Omega)}}
\newcommand{\edgelnorm}[1]{\big\lVert #1\big\rVert_{L_2(e)}}
\newcommand{\elementlnorm}[1]{\big\lVert #1\big\rVert_{L_2(E)}}
\newcommand{\ilnorm}[1]{\left\lVert #1\right\rVert_{L_\infty(0,T;L_2(\Omega))}}
\newcommand{\Llnorm}[1]{\left\lVert #1\right\rVert_{L_2(0,T;L_2(\Omega))}}
\newcommand{\hsnorm}[1]{\vertiii{#1}_{H^1(\mathcal{E}_h)}}
\newcommand{\enorm}[1]{\big\lVert #1\big\rVert_{V}}
\newcommand{\gnorm}[1]{\big\lVert #1\big\rVert_{L_2(\Gamma_N)}}
\newcommand{\hgnorm}[1]{\left\lVert #1\right\rVert_{H^1(0,T;L_2(\Gamma_N))}}
\newcommand{\ignorm}[1]{\left\lVert #1\right\rVert_{L_\infty(0,T;L_2(\Gamma_N))}}
\newcommand{\lgnorm}[1]{\left\lVert #1\right\rVert_{L_2(0,T;L_2(\Gamma_N))}}
\newcommand{\sobolevl}[1]{\vertiii{#1}_{H^0(\mathcal{E}_h)}}
\newcommand{\Gnorm}[2]{\left(#1, #2\right)_{L_2(\Gamma_N)}}
\newcommand{\Nphi}{N_\varphi}
\newcommand{\dgelliptic}{\boldsymbol R}
\newcommand{\dgv}{H^s(\mathcal{E}_h)}
\newcommand{\dgvh}{\mathcal{D}_k(\mathcal{E}_h)}
\newcommand{\Dgv}{\boldsymbol{H}^s(\mathcal{E}_h)}
\newcommand{\Dgvh}{\boldsymbol{\mathcal{D}}_k(\mathcal{E}_h)}
\newcommand{\jump}[2]{J^{\alpha_0,\beta_0}_0\left(#1,#2\right)}
\newcommand{\njump}[1]{[#1\otimes\boldsymbol{n}_e]}
\newcommand{\edgesum}{\sum\limits_{e\subset\Gamma_h\cup\Gamma_D}}
\newcommand{\elementsum}{\sum\limits_{E\in\mathcal{E}_h}}
\newcommand{\sumE}{\sum\limits_{E\in\mathcal{E}_h}\int_E}
\newcommand{\sume}{\sum\limits_{e\subset\Gamma_h\cup\Gamma_D}\int_e}
\newcommand{\nmax}{\max\limits_{0\leq n\leq N}}
\newtheorem{thm}{Theorem}[section]
\newtheorem{cor}{Corollary}[section]
\newtheorem{lem}{Lemma}[section]
\newtheorem{rmk}{Remark}[section]
\newcommand{\sumq}{\sum\limits_{q=1}^{\Nphi}}
\numberwithin{equation}{section}
\begin{document}

  \articletype{Research Article}

  \author[1]{Yongseok Jang}
  \author*[2]{Simon Shaw}
  
  \runningauthor{Y.Jang and S.Shaw}
  \affil[1]{ONERA Centre de Chatillon,
29 avenue de la Division Leclerc, 92322, Chatillon, France\\ Email: \href{mailto:yongseok.jang@onera.fr}{yongseok.jang@onera.fr} }
  \affil[2]{Department of Mathematics, Brunel University London, Kingston Lane, Uxbridge, UB8 3PH, UK\\ Email: \href{mailto:simon.shaw@brunel.ac.uk}{simon.shaw@brunel.ac.uk}}
  \title{\textit{A Priori} Analysis of a Symmetric Interior Penalty Discontinuous
  Galerkin Finite Element Method for a Dynamic Linear Viscoelasticity Model}
  \runningtitle{SIPG Finite Element Method for Viscoelasticity}
  %\subtitle{...}
  \abstract{
The stress-strain constitutive law for viscoelastic materials such as soft tissues,
metals at high temperature, and polymers, can be written as a Volterra integral equation of the
second kind with a \emph{fading memory} kernel. This integral relationship 
yields current stress for a given strain history and can be used
in the momentum balance law to derive a mathematical model for the resulting
deformation. We consider such a dynamic linear viscoelastic model problem
resulting from using a \textit{Dirichlet-Prony} series of decaying exponentials to
provide the fading memory in the Volterra kernel.
We introduce two types of \textit{internal variable} to replace the Volterra integral with a
system of auxiliary ordinary differential equations and then use a spatially discontinuous 
symmetric interior penalty Galerkin (SIPG) finite element method
and --- in time --- a Crank-Nicolson method to formulate the
fully discrete problems: one for each type of internal variable.
We present \textit{a priori} stability and error analyses without using Gr\"onwall's inequality, and with
the result that the constants in our estimates grow linearly with time rather than exponentially. 
In this sense the schemes are therefore suited to simulating long time viscoelastic response  and
this (to our knowledge) is the first time that such high quality estimates have been presented for 
SIPG finite element approximation of dynamic viscoelasticty problems. We also carry out a
number of numerical experiments using the FEniCS environment
(e.g. \url{https://fenicsproject.org})
and explain how the codes can be obtained and the results reproduced.}
  \keywords{Viscoelasticity, Generalised Maxwell Solid, 
  Symmetric Interior Penalty,
  Discontinuous Galerkin Finite Element Method,
  \textit{A priori} analysis, Internal variables}
  \classification[MSC 2010]{45D05, 65N12, 74D05, 74S05.}
  %\communicated{...}
  %\dedication{...}
  \received{...}
  \accepted{...}
  \journalname{Computational Methods in Applied Mathematics}
  %\journalyear{...}
  %\journalvolume{..}
  %\journalissue{..}
  \startpage{1}
  %\aop
  \DOI{...}

\maketitle

\begin{acknowledgement}
Y.~Jang acknowledges the support of a Brunel University London Doctoral scholarship.
\end{acknowledgement}

\begin{funding}
This research was funded, in whole or in part, by Brunel University London. A CC BY or equivalent
licence is applied to Author Accepted Manuscript (the AAM) arising from this submission,
in accordance with the University’s grant’s open access conditions.
\end{funding}

\section{Introduction} 
  The application of a \textit{nonsymmetric interior penalty Discontinuous Galerkin
  \textup{(NIPG)} Finite Element Method} (DGFEM) to a dynamic
 linear solid viscoelasticity problem with tensor-valued
 internal variable stress representation was
 presented by Rivi\`ere, Shaw and Whiteman in \cite{DGV}.
 They gave an \textit{a priori} energy error estimate by using the standard 
 Gr\"onwall inequality to deal with the time accumulation of error, and hence
 the constants in the stability and error bounds are too large to give confidence in the
 long time simulation of viscoelastic response. In this paper, we use the
 symmetric interior penalty Galerkin (SIPG) method and prove stability bounds and
 \textit{a priori} error estimates (not only in the energy norm but also in 
 the spatial $L_2$ norm) without the use of Gr\"onwall's inequality.
 We therefore obtain non-exponentially increasing bounds for temporal
 $L_\infty$-type norms.
  Furthermore, we introduce vector-valued internal variables in \textit{displacement form}
  and \textit{velocity form} (to be defined below). This has the advantage of reducing computer memory requirements in that we need only store vectors of dimension $d$, instead of
  symmetric second order tensors of dimension $d(d+1)/2$, as in \cite{DGV}. This can be significant for high fidelity 3D simulations.
 
We consider a linear homogeneous and isotropic viscoelastic solid material,
e.g. \cite{golden2013boundary}, occupying
a bounded polytopic domain, the interior of which is denoted by
$\Omega\subset\mathbb{R}^d$, and consider the deformation and stress-strain state of
this material over times $t\in[0,T]$, where $T>0$.
The deformation, $\boldsymbol{u}$, and stress, $\ushort{\boldsymbol{\sigma}}$, follow
the momentum equation,
\begin{equation}
\rho\ddot{\boldsymbol{u}}-\nabla\cdot\ushort{\boldsymbol{\sigma}}=\boldsymbol{f}
\qquad\text{on $\Omega\times(0,T]$},\label{eq:primal:visco}
\end{equation}
where overdots denote time differentiation so that $\ddot{\boldsymbol{u}}$ is acceleration,
$\rho$ is the mass density of the material (assumed constant),
$\nabla\cdot\ushort{\boldsymbol{\sigma}}$ is the divergence of stress and $\boldsymbol{f}$ is an external body force (e.g. see \cite{DGV,VE}). Similarly, $\dot{\boldsymbol{u}}$ denotes velocity.
In addition to this vector-valued governing equation, we assume a mix of essential and
natural boundary conditions so that
\begin{alignat}{2}
\boldsymbol{u}(t) &=0
&\qquad
\textrm{on }\Gamma_D\times[0,T],\label{bc:Dirichlet}
\\
\ushort{\boldsymbol{\sigma}}(t)\cdot\boldsymbol{n}&=\boldsymbol{g}_N(t)
&
\textrm{on }\Gamma_N\times[0,T],\label{bc:Neumann}
\end{alignat}
where $\Gamma_D$ is the \textit{Dirichlet} boundary (assumed to have positive surface measure),
$\Gamma_N$ is the \textit{Neumann} boundary given by
$\Gamma_N=\partial\Omega\backslash\Gamma_D$, $\boldsymbol{n}$ is an outward unit normal vector
defined a.e. on $\Gamma_N$, and $\boldsymbol{g}_N$ prescribes a surface traction on $\Gamma_N$. 
Furthermore, for initial conditions on the displacement and the velocity we take,
\begin{align}
\boldsymbol{u}(0)=\boldsymbol{u}_0
\qquad\text{and}\qquad
\dot{\boldsymbol{u}}(0)=\boldsymbol{w}_0
\label{ic}
\end{align}
for given functions $\boldsymbol{u}_0$ and $\boldsymbol{w}_0$.

To close the problem, and solve for displacement, we need a constitutive equation expressing
stress in terms of displacement. In the linear viscoelasticity model considered here this
involves a Volterra (or `fading memory') integral with the specific material
characterised by stiffness, $\ushort{\boldsymbol{D}}$, and a stress relaxation function, $\varphi$,
see e.g.~\cite{drozdov1998viscoelastic,findley2013creep,hunter1976mechanics,VE,golden2013boundary}.
The stress is then given by
\begin{equation}
\ushort{\boldsymbol{\sigma}}(t)=\ushort{\boldsymbol{D}}\varphi(t)\ushort{\boldsymbol{\varepsilon}}(0)+\int^t_0\ushort{\boldsymbol{D}}\varphi(t-s)\ushort{\dot{\boldsymbol{\varepsilon}}}(s)ds,\label{eq:consti:visco}
\end{equation}
where $\ushort{\boldsymbol{D}}$ is a fourth order positive definite tensor satisfying 
the symmeteries $D_{ijkl}=D_{jikl}=D_{ijlk}=D_{klij}$, and
$\ushort{\boldsymbol{\varepsilon}}$ is the strain defined by
\[
\varepsilon_{ij}(\boldsymbol{v})
=\frac{1}{2}
\left(\frac{\partial v_i}{\partial x_j}+\frac{\partial v_j}{\partial x_i}\right),
\qquad\text{for }i,j=1,\ldots,d.
\]
Note that in \eqref{eq:consti:visco} we use the shorthand
$\ushort{\boldsymbol{\varepsilon}}(t) = \ushort{\boldsymbol{\varepsilon}}(\boldsymbol{u}(t))$.
The form of $\varphi$ depends on which viscoelastic model is invoked. There are several (see
e.g.~\cite{drozdov1998viscoelastic,findley2013creep,golden2013boundary} and the references therein)
but here we focus on the \textit{Generalised Maxwell solid} where
\begin{align}
    \varphi(t)
    =\varphi_0+\sum_{q=1}^{N_\varphi}\varphi_qe^{-t/\tau_q}
\label{eq:stressrelax:exponential}
\end{align}
with $N_\varphi\in\mathbb{N}$, strictly positive delay times $\{\tau_q\}^{N_\varphi}_{q=1}$, and coefficients $\{\varphi_q\}^{N_\varphi}_{q=0}$ the latter of which are normalised so that
$\varphi(0)=1$. The positivity requirement excludes the case $\varphi_0=0$ (a fluid in the
sense used by Golden and Graham in \cite{golden2013boundary}): this is an important 
assumption in the arguments developed below.
 
This paper is arranged as follows. In Section~\ref{sec:preliminary} we give our notations and the preliminary background for DGFEM. In Section~\ref{sec:modelproblem} we introduce two forms
of internal variables, each of which are used to represent the Volterra (or `history')
integral, and formulate a variational problem for each form.
We then state and prove (without using Gr{\"o}nwall's inequality) stability bounds in
Section~\ref{sec:stabilityanalysis} and error bounds in Section~\ref{sec:erroranalysis},
carry out some illustrative numerical experiments in Section~\ref{sec:numerical experiments},
using FEniCS, see \cite{alnaes2015fenics} and \url{https://fenicsproject.org},
and then end with some concluding remarks in Section~\ref{sec:conclusion}.

\section{Preliminary}\label{sec:preliminary}
We use standard notation so that $L_p(\Omega),\ H^s(\Omega)$ and $W^s_p(\Omega)$ 
(with $s$ and $p$ non-negative) denote the usual Lebesgue, Hilbert and Sobolev spaces.
For any normed space $X$, $\lVert\cdot\rVert_X$ is the $X$ norm which, for inner product spaces, is
always the norm induced by the inner product. For example,
$\lVert\cdot\rVert_{L_2(\Omega)}$ is the $L_2(\Omega)$ norm, as induced by the
$L_2(\Omega)$ inner product denoted---for brevity---by $(\cdot,\cdot)$, but for
$S\subset\bar\Omega$, we use $(\cdot,\cdot)_{L_2(S)}$ for the $L_2(S)$ inner product.
For time dependent functions we expand this notation so that for $X$ a normed target space,
$f\in L_p(0,T;X)$ denotes the space of $L_p(0,T)\to X$ functions with norm
\[
\lVert f\rVert_{L_p(0,T;X)}=\left(\int^{T}_0\lVert f(t)\rVert_X^p\,dt\right)^{1/p}
\]
for $1\leq p<\infty$. When $p=\infty$ this becomes the \textit{essential supremum} norm:
\[
\lVert f\rVert_{L_\infty(0,T;X)}
=\mathop{\mathrm{ess~sup}}\limits_{0\leq t\leq T}\lVert f(t)\rVert_X .
\]
When convenient, we shall often replace the upper limit $T$ in these expressions by some other
value $t\in[0,T]$.

For inner products of vector-valued and tensor-valued functions we use the same notation
as for the scalar cases. For instance, we have
\[
\Lnorm{\boldsymbol{v}}{\boldsymbol{w}}
=\int_\Omega \boldsymbol{v}\cdot\boldsymbol{w}
\,d\Omega,
\qquad\Lnorm{\ushort{\boldsymbol{v}}}{\ushort{\boldsymbol{w}}}=\int_\Omega \ushort{\boldsymbol{v}}:\ushort{\boldsymbol{w}}\,d\Omega=\sum_{i,j=1}^{d}\int_\Omega v_{ij}w_{ij}\,d\Omega,
\]
for vector-valued functions $\boldsymbol{v}$ and $\boldsymbol{w}$, and second order tensors
$\ushort{\boldsymbol{v}}$ and $\ushort{\boldsymbol{w}}$.

\paragraph*{Meshes}
We refer to \cite{DG} for a detailed explanation of the framework of the DGFEM and here just
summarise the main points.
Assume that the closure of $\Omega$ is subdivided into closed elements $E$, where $E$ is a triangle in 2D or a
tetrahedron in 3D, and the intersection of any pair of elements is either a vertex,
an edge, a face, or empty. The diameter of $E$ is defined by
$h_E:=\sup\limits_{x,y\in E}\lVert x-y\rVert$, where $\lVert\cdot\rVert$ is the Euclidean norm,
and $|E|$ denotes the measure  (area/volume) of $E$. In a similar way, let $e$ be an edge of $E$
and use $|e|$ to denote its measure (length/area). 
Let $h$ be the maximum of the diameters $h_E$ over all the elements $E$, and define
the set $\mathcal{E}_h$ of all of those elements. Then
$|e|\leq h_E^{d-1}\leq h^{d-1}$ for all $e\subset \partial E$ for each $E\in\mathcal{E}_h$.
We further suppose that the subdivision is quasi-uniform, which means that there exists a positive
constant $C$ such that $h\leq Ch_E$ for all $E\in\mathcal{E}_h$.

Next, let $\Gamma_h$ be the set of interior edges (in 2D) or faces (in 3D) contained in
the subdivision $\mathcal{E}_h$. Then for each edge or face element $e$,
we can define a unit normal vector, $\boldsymbol{n}_e$.
If $e\subset \partial \Omega$, $\boldsymbol{n}_e$ is the outward unit normal vector. For an interior
edge $e$ such that $e\subset E_i\cap E_j$ with $i<j$, the normal vector $\boldsymbol{n}_e$ is oriented from $E_i$ to $E_j$.

\paragraph*{Test spaces}
We introduce the broken Sobolev space $H^s(\mathcal{E}_h)
=\big\{v\in L_2(\Omega)\ |\ \forall E\in \mathcal{E}_h,\ v|_{E}\in H^s(E)\big\}$ and endow it
with the broken Sobolev norm, $\vertiii{\cdot}_{H^s(\mathcal{E}_h)}$, defined by
\begin{align*}
\vertiii{v}_{H^s(\mathcal{E}_h)}=\left(\sum_{E\in\mathcal{E}_h}\lVert v\rVert_{H^s(E)}^2\right)^{1/2}.
\end{align*}
We note the following facts
$H^s(\Omega)\subset H^s(\mathcal{E}_h)$ and $H^{s+1}(\mathcal{E}_h)\subset H^s(\mathcal{E}_h)$.
These definitions and notations are extended in an obvious way to the
the vector field analogue $\Dgv$.

We define the space of polynomials of degree less than or equal to $k$ on $E$,
for $E\subset\mathbb{R}^d$, by
\begin{align*}
\mathcal{P}_k(E)
=\textrm{span}\left\{x_1^{i_1}\cdots x_d^{i_d}\ |\ \sum_{m=1}^di_m\leq k,\ \boldsymbol x \in E, i_m\in\mathbb{N}\cup\{0\} \text{ for each }m
\right\},
\end{align*}
and then define our DG finite element space as
\begin{align*}
\mathcal{D}_k(\mathcal{E}_h)=\big\{
v \in H^1(\mathcal{E}_h)\ \big|
\ v|_{E}\in\mathcal{P}_k(E)\ \textrm{for each $E\in \mathcal{E}_h$}\big\}.
\end{align*}
The analogous vector field is given by $\Dgvh:=[\dgvh]^d$.

\paragraph*{Average and Jump}
Suppose two elements $E^e_i$ and $E_j^e$ share the common edge $e$ with $i<j$ and that there is a vector
valued function $\boldsymbol{v}$ and a second order tensor
$\ushort{\boldsymbol{v}}$ on $E^e_i$ and $E_j^e$.
Then we define an average and a jump for $\boldsymbol{v}$ and $\ushort{\boldsymbol{v}}$ by
\begin{align*}
\{\boldsymbol{v}\}
=\frac{(\boldsymbol{v}|_{E_i^e})|_e+(\boldsymbol{v}|_{E^e_j})|_e}{2},
\qquad&
\{\ushort{\boldsymbol{v}}\}
=\frac{(\ushort{\boldsymbol{v}}|_{E_i^e})|_e+(\ushort{\boldsymbol{v}}|_{E^e_j})|_e}{2},
\\
[\boldsymbol{v}]
=(\boldsymbol{v}|_{E_i^e})|_e-(\boldsymbol{v}|_{E^e_j})|_e,
\qquad&
{[\boldsymbol{v}\otimes\boldsymbol{n}_e]}
=({\boldsymbol{v}}|_{E_i^e})|_e\otimes\boldsymbol{n}_e
-(\boldsymbol{v}|_{E^e_j})|_e\otimes\boldsymbol{n}_e
\end{align*}
where the normal vector $\boldsymbol{n}_e$ is oriented from $E_i^e$ to $E_j^e$
and $\otimes$ is the outer product defined, for vectors $\boldsymbol{a}$ and $\boldsymbol{b}$, by
$(\boldsymbol{a}\otimes\boldsymbol{b})_{mn}=a_{m}b_{n}$ for $m,n=1,\ldots,d$.
On the other hand, if $e\subset\partial\Omega$ and $e\subset\partial E$
\begin{align*}
\{\boldsymbol{v}\}=\boldsymbol{v}|_e,
\qquad
\{{\boldsymbol{\ushort{v}}}\}=\boldsymbol{\ushort{v}}|_e,\
\qquad
[\boldsymbol{v}]=\boldsymbol{v}|_e\cdot{\boldsymbol{n}_e},
\qquad\textrm{ and }\qquad
{[\boldsymbol{v}\otimes\boldsymbol{n}_e]}={\boldsymbol{v}}|_e\otimes\boldsymbol{n}_e.
\end{align*}
We can now introduce the jump penalty operator,
\[
\jump{\boldsymbol{v}}{\boldsymbol{w}}
=
\sum_{e\subset\Gamma_h\cup\Gamma_D}
\frac{\alpha_0}{|e|^{\beta_0}}\int_e[\boldsymbol v]\cdot[\boldsymbol w]\,de,
\]
where $\alpha_0$ and  $\beta_0$ are positive constants.

\paragraph*{Useful inequalities}
We now recall the following inequalities for use later in the \textit{a priori} analysis.
\begin{itemize}
    \item Inverse polynomial trace inequalities \cite{WARBURTON20032765}: For any $v\in\mathcal{P}_k(E)$, $\forall e\subset\partial E,$
    \begin{equation}\left\{\begin{array}{ll}
   \edgelnorm{v}&\leq C|e|^{1/2}|E|^{-1/2}\elementlnorm{v},\\
     \edgelnorm{v}&\leq Ch_E^{-1/2}\elementlnorm{v},\\
     \edgelnorm{{\nabla v\cdot\boldsymbol{n}_e}}&\leq C|e|^{1/2}|E|^{-1/2}\elementlnorm{\nabla v},\\
     \edgelnorm{\nabla v\cdot\boldsymbol{n}_e}&\leq Ch_E^{-1/2}\elementlnorm{\nabla v},
    \end{array}\right.\label{trace:inv}
\end{equation}
where $C$ is a positive constant and is independent of $h_E$ but depends on the polynomial degree $k$.

\item {Poincar\'e's Inequality \cite{brenner2003poincare,DG}}: If $\beta_0(d-1)\geq1$
and $|e|\leq1$ for every $e\subset\Gamma_h\cup\Gamma_D$, then,
\begin{align}\label{dgpoincare}
\llnorm{v}\leq C\left(\sobolevl{\nabla v}^2+\sum_{e\subset\Gamma_h\cup\Gamma_D}\frac{1}{|e|^{\beta_0}}\edgelnorm{[v]}^2\right)^{1/2},
\end{align}
for any $v\in H^1(\mathcal{E}_h)$.

\item {Inverse Inequality (or Markov Inequality) \cite{ozisik2010constants,DG}}: 
For any $E\in\mathcal{E}_h$, there is a positive constant $C$ such that
\begin{align}\label{dginverse}
    \forall v\in \mathcal{P}_k(E),\ \elementlnorm{\nabla^jv}\leq Ch_E^{-j}\elementlnorm{v},\ \forall j\in\{0,1,\ldots, k\},
\end{align}where 
\begin{align*}
    \nabla^jv=\left\{ \begin{array}{cc}
         \nabla\cdot\nabla^{j-1}v&\text{for even }j,  \\
         \nabla(\nabla^{j-1}v)&\text{for odd }j, 
    \end{array}\right. \qquad\text{and}\qquad\nabla^0v=v.
\end{align*}
\end{itemize}
Note that these three inequalities can also be applied in the obvious way to vector-valued
functions.

\paragraph*{DG bilinear form}
We define a DG bilinear form $ a:\Dgv\times \Dgv\mapsto\mathbb{R}$ for $s>3/2$ by
\begin{align}
\DGnorm{\boldsymbol v}{\boldsymbol w}
=&\sum_{E\in\mathcal{E}_h}\int_E
\ushort{\boldsymbol D}\ushort{\boldsymbol \varepsilon}(\boldsymbol  v)
:\ushort{\boldsymbol \varepsilon} (\boldsymbol w)\,dE
-\sum_{e\subset\Gamma_h\cup\Gamma_D}\int_e
\{\ushort{\boldsymbol D}\ushort{\boldsymbol \varepsilon}(\boldsymbol  v)\}
:{[\boldsymbol w\otimes\boldsymbol{n}_e]}\, de
\nonumber\\
&-\sum_{e\subset\Gamma_h\cup\Gamma_D}\int_e
\{\ushort{\boldsymbol D}\ushort{\boldsymbol \varepsilon}(\boldsymbol  w)\}
:{[\boldsymbol v\otimes\boldsymbol{n}_e]}\,de
+ J_0^{\alpha_0,\beta_0}(\boldsymbol v,\boldsymbol w),
\label{eq:DG_biform}
\end{align}
for any $\boldsymbol v,\boldsymbol w\in \Dgv$.
We also define our DG energy norm by
\[
\enorm{\boldsymbol{v}}
=\left(
\sumE\ushort{\boldsymbol{D}}\ushort{\boldsymbol{\varepsilon}}(\boldsymbol{v}):\ushort{\boldsymbol{\varepsilon}}(\boldsymbol{v})\,dE+\jump{\boldsymbol{v}}{\boldsymbol{v}}
\right)^{1/2},\qquad\textrm{for }\boldsymbol{v}\in\Dgv.
\]
Comparing these we can observe that
\begin{align}
    \DGnorm{\boldsymbol{v}}{\boldsymbol{v}}=\enorm{\boldsymbol{v}}^2
    -2\!\!\!\!\!\!\!\sume\{\ushort{\boldsymbol D}\ushort{\boldsymbol \varepsilon}(\boldsymbol  v)\}:\njump{\boldsymbol{v}}\,de.
    \label{eq:relation:dgnorm}
\end{align}
\begin{rmk}
In the DG bilinear form, the third term is called the ``\textit{interior penalty}'' term and the
last one is called the ``\textit{jump penalty}''. Depending on the sign of the interior penalty, the bilinear form
is either symmetric or nonsymmetric. In this article, we consider only the symmetric DG method and refer to \textnormal{\cite{jang2020spatially,DGV}} for an application of the nonsymmetric method for viscoelasticity.
The reason why we employ SIPG is that it requires only the standard penalisation, $\beta_0(d-1)\ge1$, for optimal
spatial error estimates, while NIPG needs the super penalisation $\beta_0(d-1)\geq3$.
Since the use of the super penalisation enforces the linear system to be more ill-conditioned, we may encounter some difficulty in solving the system with iterative solvers. For more details, we refer to \textnormal{\cite{jang2020spatially}}.
\end{rmk}
\begin{rmk}[Korn's inequality for piecewise $H^1$ vector fields \cite{brenner2004korn,DG}] 
If we have $\beta_0(d-1)\geq1$, then since $\ushort{\boldsymbol{D}}$ is symmetric positive definite and the jump penalty is defined not only on the interior edges but also on the positive measured Dirichlet boundary, Korn's inequality yields, for any $\boldsymbol{v}\in \boldsymbol{H}^1(\mathcal{E}_h)$,
\begin{align}
    \elementsum\elementlnorm{\nabla\boldsymbol{v}}^2\leq C\enorm{\boldsymbol{v}}^2,\label{eq:kordDG}
\end{align}for some positive $C$ independent of $\boldsymbol{v}$.
\end{rmk}

In CGFEM the squared energy norm, $\enorm{\boldsymbol{v}}^2$, is usually given by
the energy inner product, $\DGnorm{\boldsymbol{v}}{\boldsymbol{v}}$. Here, in DGFEM, we
see from \eqref{eq:relation:dgnorm} that this is nearly true but there is an extra term.
The following lemma allows us to deal with that term.

\begin{lem}
Suppose $\alpha_0>0$ and $\beta_0(d-1)\geq 1$. For any $\boldsymbol{v},\boldsymbol{w}\in\Dgvh$,
and for any pair $E_1, E_2\in \mathcal{E}_h$, we have
\begin{align}
    \bigg|\int_e\{\ushort{\boldsymbol D}\ushort{\boldsymbol \varepsilon}(\boldsymbol  v)\}:\njump{\boldsymbol{w}}\,de\bigg|
    \leq\frac{C}{\sqrt{\alpha_0}}\bigg(\lVert{\ushort{\boldsymbol D}\ushort{\boldsymbol \varepsilon}(\boldsymbol{v})}\rVert_{L_2(E_1)}^2+\lVert{\ushort{\boldsymbol D}\ushort{\boldsymbol \varepsilon}(\boldsymbol{v})}\rVert_{L_2(E_2)}^2+\frac{\alpha_0}{|e|^{\beta_0}}\edgelnorm{[\boldsymbol{w}]}^2\bigg),\label{eq:vector:dg:bdd1}
\end{align}
where $e$ is the shared edge of elements $E_1$ and $E_2$,
and $C$ is a positive constant independent of $\boldsymbol{v}$ and $\boldsymbol{w}$
but dependent on the inverse polynomial trace inequality's constants and the domain.
When $e\subset \Gamma_N$, \eqref{eq:vector:dg:bdd1} holds with $E_1=E_2$.
\end{lem}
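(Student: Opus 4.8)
\medskip
\noindent\textbf{Proof sketch.}
The plan is to estimate the edge integral by the Cauchy--Schwarz inequality, control the average factor by an inverse polynomial trace inequality, and then absorb the resulting mesh factor using the hypothesis $\beta_0(d-1)\ge1$ inside a weighted Young inequality that is tuned so as to produce both the claimed $\alpha_0^{-1/2}$ scaling and the correct weight $\alpha_0|e|^{-\beta_0}$ on the jump term.

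First consider an interior edge $e=E_1\cap E_2\subset\Gamma_h$. Since $\boldsymbol v\in\Dgvh$ and $\ushort{\boldsymbol D}$ has constant entries, every component of $\ushort{\boldsymbol D}\ushort{\boldsymbol\varepsilon}(\boldsymbol v)|_{E_i}$ is a polynomial of degree at most $k-1$, so the inverse polynomial trace inequalities \eqref{trace:inv} apply to it componentwise. Using the pointwise Cauchy--Schwarz inequality for the tensor contraction, then Cauchy--Schwarz on $L_2(e)$, and $|\boldsymbol n_e|=1$ (so that $\edgelnorm{\njump{\boldsymbol w}}=\edgelnorm{[\boldsymbol w]}$), together with the definition of the average, the triangle inequality and an inverse trace inequality from \eqref{trace:inv}, I obtain
\[
\Big|\int_e\{\ushort{\boldsymbol D}\ushort{\boldsymbol\varepsilon}(\boldsymbol v)\}:\njump{\boldsymbol w}\,de\Big|
\le\edgelnorm{\{\ushort{\boldsymbol D}\ushort{\boldsymbol\varepsilon}(\boldsymbol v)\}}\,\edgelnorm{[\boldsymbol w]}
\le\frac{C}{2}\sum_{i=1}^{2}h_{E_i}^{-1/2}\big\lVert\ushort{\boldsymbol D}\ushort{\boldsymbol\varepsilon}(\boldsymbol v)\big\rVert_{L_2(E_i)}\,\edgelnorm{[\boldsymbol w]}.
\]

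Next, to each term of the sum I would apply Young's inequality $ab\le\tfrac{\delta}{2}a^2+\tfrac{1}{2\delta}b^2$ with $a=h_{E_i}^{-1/2}\lVert\ushort{\boldsymbol D}\ushort{\boldsymbol\varepsilon}(\boldsymbol v)\rVert_{L_2(E_i)}$, $b=\edgelnorm{[\boldsymbol w]}$ and the element-dependent choice $\delta=h_{E_i}/\sqrt{\alpha_0}$, which gives
\[
h_{E_i}^{-1/2}\big\lVert\ushort{\boldsymbol D}\ushort{\boldsymbol\varepsilon}(\boldsymbol v)\big\rVert_{L_2(E_i)}\,\edgelnorm{[\boldsymbol w]}
\le\frac{1}{2\sqrt{\alpha_0}}\big\lVert\ushort{\boldsymbol D}\ushort{\boldsymbol\varepsilon}(\boldsymbol v)\big\rVert_{L_2(E_i)}^{2}
+\frac{\sqrt{\alpha_0}}{2h_{E_i}}\edgelnorm{[\boldsymbol w]}^{2}.
\]
The only mesh fact needed is $|e|\le h_{E_i}^{d-1}$, which together with $\beta_0(d-1)\ge1$ yields $|e|^{\beta_0}/h_{E_i}\le h_{E_i}^{\beta_0(d-1)-1}\le C$, the last step being valid because the exponent is nonnegative and $h_{E_i}$ is bounded (by $\mathrm{diam}\,\Omega$); hence the last term above is $\le\frac{C}{2\sqrt{\alpha_0}}\cdot\frac{\alpha_0}{|e|^{\beta_0}}\edgelnorm{[\boldsymbol w]}^{2}$. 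Summing over $i\in\{1,2\}$ and collecting constants delivers \eqref{eq:vector:dg:bdd1}. For a Neumann edge $e\subset\Gamma_N$ the average and jump reduce to one-sided traces from the unique element $E_1$ containing $e$; running the identical argument with $E_2:=E_1$ produces exactly the stated inequality, which is why $E_1=E_2$ is allowed there.

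I do not expect a genuine obstacle here: the estimate is a standard trace-plus-Young argument. The two points that need a little care are (i) justifying that $\ushort{\boldsymbol D}\ushort{\boldsymbol\varepsilon}(\boldsymbol v)$ is piecewise polynomial so that \eqref{trace:inv} may legitimately be used, and (ii) choosing the Young weight $\delta$ so that the constant scales as $\alpha_0^{-1/2}$ rather than the naive $\alpha_0^{-1}$ while the error term reproduces precisely the jump-penalty weight $\alpha_0|e|^{-\beta_0}$ --- and it is exactly in matching that weight that the hypothesis $\beta_0(d-1)\ge1$ enters, in the same way it is used in the Poincar\'e and Korn inequalities recalled above.
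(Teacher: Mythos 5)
Your proposal is correct and follows essentially the same route as the paper: Cauchy--Schwarz on the edge, the inverse polynomial trace inequality applied to the (piecewise polynomial) components of $\ushort{\boldsymbol D}\ushort{\boldsymbol \varepsilon}(\boldsymbol v)$, the mesh facts $|e|\le h_{E_i}^{d-1}$, $\beta_0(d-1)\ge1$ and $h_{E_i}\le\mathrm{diam}(\Omega)$, and a Young inequality tuned to produce the $\alpha_0^{-1/2}$ prefactor together with the penalty weight $\alpha_0|e|^{-\beta_0}$. The only (immaterial) difference is that you apply Young elementwise with an $h_{E_i}$-dependent weight, whereas the paper first combines the two element contributions by a discrete Cauchy--Schwarz step and then applies Young once with $\epsilon=1/\sqrt{\alpha_0}$.
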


\begin{proof}
Let $e\subset \Gamma_h$ and $e\subset E_1\cap E_2$ where $E_1,E_2\in\mathcal{E}_h$.
Recalling the definitions of the average and jump, and using the Cauchy-Schwarz inequality,
we have
\begin{align*}
    \bigg|\int_e&\{\ushort{\boldsymbol D}\ushort{\boldsymbol \varepsilon}(\boldsymbol  v)\}:\njump{\boldsymbol{w}}\,de\bigg|\leq\frac{1}{2}\left(\edgelnorm{\ushort{\boldsymbol D}\ushort{\boldsymbol \varepsilon}(\boldsymbol  v|_{E_1})}+\edgelnorm{\ushort{\boldsymbol D}\ushort{\boldsymbol \varepsilon}(\boldsymbol  v|_{E_2})}\right)\frac{|e|^{\beta_0/2}}{|e|^{\beta_0/2}}\edgelnorm{[\boldsymbol{w}]},
\end{align*}after noting that $\njump{\boldsymbol{w}}=[\boldsymbol{w}]\otimes\boldsymbol{n}_e$, since $\boldsymbol{n}_e|_{E_1}=-\boldsymbol{n}_e|_{E_2}$. The inverse polynomial trace inequality \eqref{trace:inv} implies that
\begin{align*}
\bigg|    \int_e&\{\ushort{\boldsymbol D}\ushort{\boldsymbol \varepsilon}(\boldsymbol  v)\}:\njump{\boldsymbol{w}}\,de\bigg|\leq C\left(|e|^{\beta_0/2}h^{-1/2}_{E_1}\lVert{\ushort{\boldsymbol D}\ushort{\boldsymbol \varepsilon}(\boldsymbol{v})}\rVert_{L_2(E_1)}+|e|^{\beta_0/2}h^{-1/2}_{E_2}\lVert{\ushort{\boldsymbol D}\ushort{\boldsymbol \varepsilon}(\boldsymbol{v})}\rVert_{L_2(E_2)}\right)\frac{\edgelnorm{[\boldsymbol{w}]}}{|e|^{\beta_0/2}},
\end{align*}
for a positive constant $C$. Since $|e|\leq h^{d-1}$, 
the discrete Cauchy-Schwarz inequality yields
\begin{align*}
    \bigg|\int_e&\{\ushort{\boldsymbol D}\ushort{\boldsymbol \varepsilon}(\boldsymbol  v)\}:\njump{\boldsymbol{w}}de\bigg|\\\leq&C\bigg(h^{\beta_0(d-1)-1}_{E_1}+h^{\beta_0(d-1)-1}_{E_2}\bigg)^{1/2}\bigg(\lVert{\ushort{\boldsymbol D}\ushort{\boldsymbol \varepsilon}(\boldsymbol{v})}\rVert_{L_2(E_1)}^2+\lVert{\ushort{\boldsymbol D}\ushort{\boldsymbol \varepsilon}(\boldsymbol{v})}\rVert_{L_2(E_2)}^2\bigg)^{1/2}\frac{\edgelnorm{[\boldsymbol{w}]}}{|e|^{\beta_0/2}},\\
    \leq&C\bigg(\lVert{\ushort{\boldsymbol D}\ushort{\boldsymbol \varepsilon}(\boldsymbol{v})}\rVert_{L_2(E_1)}^2+\lVert{\ushort{\boldsymbol D}\ushort{\boldsymbol \varepsilon}(\boldsymbol{v})}\rVert_{L_2(E_2)}^2\bigg)^{1/2}\frac{\edgelnorm{[\boldsymbol{w}]}}{|e|^{\beta_0/2}},
\end{align*}
because $h_{E_1}\leq\textrm{diam}(\Omega),\ h_{E_2}\leq\textrm{diam}(\Omega)$ and $\beta_0(d-1)\geq1$. Using Young's inequality we then obtain
\begin{align*}
\bigg|    \int_e \{\ushort{\boldsymbol D}\ushort{\boldsymbol \varepsilon}(\boldsymbol  v)\}:\njump{\boldsymbol{w}}de\bigg|
\leq
C\left(\frac{\epsilon}{2}
\left(
\lVert{\ushort{\boldsymbol D}\ushort{\boldsymbol \varepsilon}(\boldsymbol{v})}\rVert_{L_2(E_1)}^2+\lVert{\ushort{\boldsymbol D}\ushort{\boldsymbol \varepsilon}(\boldsymbol{v})}\rVert_{L_2(E_2)}^2
\right)+\frac{1}{2\epsilon}\frac{\edgelnorm{[\boldsymbol{w}]}^2}{|e|^{\beta_0}}\right),
\end{align*}
for any positive $\epsilon$. Taking $\epsilon=1/\sqrt{\alpha_0}$ then completes the proof.
\end{proof}

\begin{cor}\label{cor:estimate_interior}
By \eqref{eq:vector:dg:bdd1}, we can also derive
\begin{align}
    \sume\bigg|\{\ushort{\boldsymbol D}\ushort{\boldsymbol \varepsilon}(\boldsymbol  v)\}:\njump{\boldsymbol{w}}de\bigg|
    \leq\frac{C}{\sqrt{\alpha_0}}\left(\enorm{\boldsymbol{v}}^2+\jump{\boldsymbol{w}}{\boldsymbol{w}}\right),\label{eq:vector:dg:bdd2}
\end{align}
where $C$ is a positive constant independent of $\boldsymbol{v},\boldsymbol{w}\in\Dgvh$
but dependent on the inverse polynomial trace inequality's constant in \eqref{trace:inv}.
\end{cor}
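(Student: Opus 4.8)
The plan is to sum the edgewise bound \eqref{eq:vector:dg:bdd1} over all $e\subset\Gamma_h\cup\Gamma_D$ and then to identify the two kinds of term produced on the right. Observe first that, as its proof shows, the right-hand side of \eqref{eq:vector:dg:bdd1} in fact bounds $\int_e\bigl|\{\ushort{\boldsymbol D}\ushort{\boldsymbol \varepsilon}(\boldsymbol v)\}:\njump{\boldsymbol w}\bigr|\,de$ and not merely the modulus of the integral, so nothing is lost in passing to $\sume$. For an interior edge $e\subset\Gamma_h$ I take $E_1,E_2$ to be the two elements meeting along $e$, and for a boundary edge $e\subset\Gamma_D$ I take $E_1=E_2$ to be the unique element with $e\subset\partial E_1$; the lemma applies in this degenerate case by the same argument recorded there for $e\subset\Gamma_N$.

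Summing \eqref{eq:vector:dg:bdd1}, the jump contributions $\alpha_0|e|^{-\beta_0}\edgelnorm{[\boldsymbol w]}^2$ reassemble, by the definition of $J_0^{\alpha_0,\beta_0}$, into exactly $\jump{\boldsymbol w}{\boldsymbol w}$. For the element contributions, the point is that the mesh is conforming and simplicial, so each $E\in\mathcal{E}_h$ carries exactly $d+1$ faces; hence $\lVert\ushort{\boldsymbol D}\ushort{\boldsymbol \varepsilon}(\boldsymbol v)\rVert_{L_2(E)}^2$ occurs at most $2(d+1)$ times in $\sum_e\bigl(\lVert\ushort{\boldsymbol D}\ushort{\boldsymbol \varepsilon}(\boldsymbol v)\rVert_{L_2(E_1)}^2+\lVert\ushort{\boldsymbol D}\ushort{\boldsymbol \varepsilon}(\boldsymbol v)\rVert_{L_2(E_2)}^2\bigr)$, so this sum is at most $C\elementsum\lVert\ushort{\boldsymbol D}\ushort{\boldsymbol \varepsilon}(\boldsymbol v)\rVert_{L_2(E)}^2$.

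It then remains to absorb this element sum into $\enorm{\boldsymbol v}^2$. Because $\ushort{\boldsymbol D}$ is a constant symmetric positive definite fourth order tensor, factoring $\ushort{\boldsymbol D}=\ushort{\boldsymbol D}^{1/2}\ushort{\boldsymbol D}^{1/2}$ gives the pointwise bound $|\ushort{\boldsymbol D}\ushort{\boldsymbol a}|^2=|\ushort{\boldsymbol D}^{1/2}(\ushort{\boldsymbol D}^{1/2}\ushort{\boldsymbol a})|^2\leq\lVert\ushort{\boldsymbol D}^{1/2}\rVert^2\,(\ushort{\boldsymbol D}\ushort{\boldsymbol a}:\ushort{\boldsymbol a})$ for every symmetric second order tensor $\ushort{\boldsymbol a}$; applying this with $\ushort{\boldsymbol a}=\ushort{\boldsymbol \varepsilon}(\boldsymbol v)$, integrating over each $E$ and summing yields $\elementsum\lVert\ushort{\boldsymbol D}\ushort{\boldsymbol \varepsilon}(\boldsymbol v)\rVert_{L_2(E)}^2\leq C\sumE\ushort{\boldsymbol D}\ushort{\boldsymbol \varepsilon}(\boldsymbol v):\ushort{\boldsymbol \varepsilon}(\boldsymbol v)\,dE\leq C\enorm{\boldsymbol v}^2$, the final inequality holding because $\jump{\boldsymbol v}{\boldsymbol v}\geq0$. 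Combining the last two paragraphs and folding all $d$- and mesh-independent constants into $C$ gives \eqref{eq:vector:dg:bdd2}. The only delicate point I anticipate is the finite-overlap bookkeeping in the second step — checking that each element is charged only a bounded number of times — but this is immediate from the conforming simplicial structure of the mesh; everything else is the lemma together with the elementary spectral estimate for $\ushort{\boldsymbol D}$.
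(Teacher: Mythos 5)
Your proposal is correct and follows exactly the route the paper intends: the paper gives no separate proof of the corollary, treating it as the immediate consequence of summing \eqref{eq:vector:dg:bdd1} over $e\subset\Gamma_h\cup\Gamma_D$, and your write-up supplies the expected details (finite overlap of at most $2(d+1)$ edge contributions per element, and the spectral bound $\lVert\ushort{\boldsymbol D}\ushort{\boldsymbol \varepsilon}(\boldsymbol v)\rVert_{L_2(E)}^2\leq C\int_E\ushort{\boldsymbol D}\ushort{\boldsymbol \varepsilon}(\boldsymbol v):\ushort{\boldsymbol \varepsilon}(\boldsymbol v)\,dE$ to absorb the element sums into $\enorm{\boldsymbol v}^2$). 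All steps check out, including the treatment of Dirichlet boundary edges via the degenerate case $E_1=E_2$.
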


\begin{thm}[Coercivity and continuity]\label{thm:coer_conti}
Suppose $\alpha_0>0$ is sufficiently large and $\beta_0(d-1)\geq1$. Then there exist positive constants $\kappa$ and $K$ such that
\begin{align*}
    \kappa\enorm{\boldsymbol{v}}^2\leq\DGnorm{\boldsymbol{v}}{\boldsymbol{v}},\qquad\left|\DGnorm{\boldsymbol{v}}{\boldsymbol{w}}\right|\leq K\enorm{\boldsymbol{v}}\enorm{\boldsymbol{w}},\qquad\forall \boldsymbol{v},\boldsymbol{w}\in\Dgvh,
\end{align*}
where $\kappa$ and $K$ are independent of $\boldsymbol{v}$ and $\boldsymbol{w}$.
\end{thm}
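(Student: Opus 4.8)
The plan is to derive both inequalities directly from the algebraic identity \eqref{eq:relation:dgnorm}, which expresses $\DGnorm{\boldsymbol{v}}{\boldsymbol{v}}$ as $\enorm{\boldsymbol{v}}^2$ plus a controllable interior-penalty remainder, together with the bound on that remainder furnished by Corollary~\ref{cor:estimate_interior}. For \emph{coercivity}, I would start from \eqref{eq:relation:dgnorm} and apply \eqref{eq:vector:dg:bdd2} with $\boldsymbol{w}=\boldsymbol{v}$ to the sum $\sume|\{\ushort{\boldsymbol D}\ushort{\boldsymbol \varepsilon}(\boldsymbol v)\}:\njump{\boldsymbol v}\,de|$, obtaining $\DGnorm{\boldsymbol{v}}{\boldsymbol{v}}\geq\enorm{\boldsymbol{v}}^2-\frac{2C}{\sqrt{\alpha_0}}\big(\enorm{\boldsymbol{v}}^2+\jump{\boldsymbol{v}}{\boldsymbol{v}}\big)$; since $\jump{\boldsymbol{v}}{\boldsymbol{v}}\leq\enorm{\boldsymbol{v}}^2$ by the definition of the energy norm, this gives $\DGnorm{\boldsymbol{v}}{\boldsymbol{v}}\geq\big(1-\frac{4C}{\sqrt{\alpha_0}}\big)\enorm{\boldsymbol{v}}^2$, so choosing $\alpha_0$ large enough that $4C/\sqrt{\alpha_0}<1$ yields coercivity with $\kappa=1-4C/\sqrt{\alpha_0}>0$. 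This is the step where the hypothesis ``$\alpha_0$ sufficiently large'' is used and is the only genuinely delicate point; everything else is bookkeeping.

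For \emph{continuity}, I would bound each of the four terms in the definition \eqref{eq:DG_biform} of $\DGnorm{\boldsymbol{v}}{\boldsymbol{w}}$ separately by $C\enorm{\boldsymbol{v}}\enorm{\boldsymbol{w}}$. The volume term $\sumE\ushort{\boldsymbol D}\ushort{\boldsymbol \varepsilon}(\boldsymbol v):\ushort{\boldsymbol \varepsilon}(\boldsymbol w)\,dE$ is controlled by Cauchy--Schwarz in $L_2(\Omega)$ and the bound $\sumE\ushort{\boldsymbol D}\ushort{\boldsymbol \varepsilon}(\boldsymbol z):\ushort{\boldsymbol \varepsilon}(\boldsymbol z)\,dE\leq\enorm{\boldsymbol z}^2$, which holds for $\boldsymbol z\in\{\boldsymbol v,\boldsymbol w\}$ directly from the definition of $\enorm{\cdot}$ (and uses the positive-definiteness and symmetries of $\ushort{\boldsymbol D}$ for the Cauchy--Schwarz step on the $\ushort{\boldsymbol D}$-weighted inner product). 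The jump-penalty term $\jump{\boldsymbol v}{\boldsymbol w}$ is bounded by $\jump{\boldsymbol v}{\boldsymbol v}^{1/2}\jump{\boldsymbol w}{\boldsymbol w}^{1/2}\leq\enorm{\boldsymbol v}\enorm{\boldsymbol w}$ via Cauchy--Schwarz on the bilinear form $J_0^{\alpha_0,\beta_0}$. The two interior-penalty terms $\sume\{\ushort{\boldsymbol D}\ushort{\boldsymbol \varepsilon}(\boldsymbol v)\}:\njump{\boldsymbol w}\,de$ and $\sume\{\ushort{\boldsymbol D}\ushort{\boldsymbol \varepsilon}(\boldsymbol w)\}:\njump{\boldsymbol v}\,de$ are handled by Corollary~\ref{cor:estimate_interior}: the first is at most $\frac{C}{\sqrt{\alpha_0}}(\enorm{\boldsymbol v}^2+\jump{\boldsymbol w}{\boldsymbol w})$, and rather than keeping this quadratic-in-one-argument form I would instead revisit the elementwise estimate in the Lemma and insert a free Young parameter so as to pull out a genuine product $\enorm{\boldsymbol v}\enorm{\boldsymbol w}$ — concretely, by stopping at the line $\sume|\{\ushort{\boldsymbol D}\ushort{\boldsymbol \varepsilon}(\boldsymbol v)\}:\njump{\boldsymbol w}|\leq C\big(\sumE\ushort{\boldsymbol D}\ushort{\boldsymbol \varepsilon}(\boldsymbol v):\ushort{\boldsymbol \varepsilon}(\boldsymbol v)\big)^{1/2}\jump{\boldsymbol w}{\boldsymbol w}^{1/2}\leq C\enorm{\boldsymbol v}\enorm{\boldsymbol w}$, which follows by applying Cauchy--Schwarz across edges before invoking Young. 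Summing the four contributions gives $|\DGnorm{\boldsymbol{v}}{\boldsymbol{w}}|\leq K\enorm{\boldsymbol v}\enorm{\boldsymbol w}$ with $K$ depending only on the trace-inequality constants, $\ushort{\boldsymbol D}$, and the domain.

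The main obstacle is genuinely confined to the coercivity half: one must track the constant $C$ from Corollary~\ref{cor:estimate_interior} carefully enough to see that the threshold on $\alpha_0$ is an \emph{explicit} finite number (of the form $\alpha_0>16C^2$) and that $\kappa$ then does not degenerate — in particular $\kappa$ must be bounded away from $0$ uniformly in $h$, which holds because $C$ depends only on the mesh regularity, the polynomial degree $k$, and $\mathrm{diam}(\Omega)$, not on $h$ itself. Continuity, by contrast, requires no smallness and $K$ may be taken, e.g., as $2$ plus the interior-penalty contributions. I would close by remarking that both constants are independent of $h$ and of $\boldsymbol v,\boldsymbol w$, which is what is needed for the subsequent \textit{a priori} analysis.
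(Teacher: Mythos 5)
Your argument is correct and uses exactly the ingredients the paper points to — the identity \eqref{eq:relation:dgnorm} together with \eqref{eq:vector:dg:bdd1} and \eqref{eq:vector:dg:bdd2} — so it is essentially the same approach; the paper itself merely defers the details to \cite{jang2020spatially}, and your write-up supplies them, including the correct observation that one must stop before Young's inequality in the edge estimate to extract the product form $\enorm{\boldsymbol v}\enorm{\boldsymbol w}$ needed for continuity.
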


\begin{proof}
The proof follows the same arguments in \cite[Theorems 4.5 and 4.6]{jang2020spatially}. For example, the use of \eqref{eq:vector:dg:bdd1} and \eqref{eq:vector:dg:bdd2} leads us to show the coercivity and the continuity. For details, please see \cite{jang2020spatially}.
\end{proof}

\begin{rmk}
Due to the use of inverse polynomial trace inequality, the DG bilinear form will not be coercive and
continuous on the broken Sobolev space. In other words, \textnormal{Theorem \ref{thm:coer_conti}} holds
only on the finite element space. For the choice of the penalty parameter $\alpha_0$, we refer to \textnormal{\cite{hansbo2002discontinuous,wihler2006locking}}. For instance, we will take 
$\alpha_0\in[10,100]$ in the numerical experiments \textnormal{Section~\ref{sec:numerical experiments}}.
\end{rmk}

\paragraph*{DG elliptic projection}
The DG elliptic projector, $\dgelliptic$, is defined for $\boldsymbol u\in\Dgv$ and $s>3/2$ by,
\[
\dgelliptic:\Dgv\mapsto\Dgvh\ \textrm{ such that }\ \DGnorm{\boldsymbol u}{\boldsymbol v}=\DGnorm{\dgelliptic \boldsymbol u}{\boldsymbol v},\ \forall{{\boldsymbol v}}\in\Dgvh.
\]
Referring to \cite{DG,riviere2003discontinuous,houston2006hp,wihler2006locking}, for example,
we recall the following elliptic-error estimates, 
\begin{align}\label{vector:appDG}
     \enorm{\boldsymbol u-\dgelliptic \boldsymbol u}\leq{C}h^{\min(k+1,s)-1}\vertiii{\boldsymbol u}_{H^s(\mathcal{E}_h)},\\
    \label{vector:appl2}
    \llnorm{\boldsymbol u-\dgelliptic \boldsymbol u}\leq{C}h^{\min(k+1,s)}\vertiii{\boldsymbol u}_{H^s(\mathcal{E}_h)},
\end{align}
for $\boldsymbol u\in \Dgv$ with $s>3/2$ and for sufficiently large penalty parameters
$\alpha_0\textrm{ and }\beta_0\geq(d-1)^{-1}$. Here, the positive constant $C$ is independent of
$\boldsymbol{u}$ but dependent on the domain, its boundary, and the polynomial degree $k$.

We now move on to describe the model problem.

\section{Model Problem} \label{sec:modelproblem}
We recall our primal equations \eqref{eq:primal:visco}-\eqref{eq:stressrelax:exponential}
and hereafter assume the data terms are bounded and smooth so that
$\boldsymbol{f}\in C(0,T;\boldsymbol L_2(\Omega))$,
$\boldsymbol{g}_N\in C^1(0,T;\boldsymbol L_2(\Gamma_N))$,
$\boldsymbol{u}_0\in \boldsymbol H^s(\Omega)$, and
$\boldsymbol{w}_0\in \boldsymbol L_2(\Omega)$.
We first of all set up the internal variable representations of viscoelasticity and
then give the variational formulations.

\subsection{Internal Variables}
Recalling the constitutive equation \eqref{eq:consti:visco} and the stress relaxation function \eqref{eq:stressrelax:exponential} we define internal variables as
\begin{align}
\boldsymbol{\psi}_q(t)
=\int^{t}_{0} \frac{\varphi_q}{\tau_q}e^{-(t-t')/\tau_q}\boldsymbol u(t')\,dt'
\qquad\textrm{ and }\qquad
\boldsymbol{\zeta}_q(t)
=\int^{t}_{0} \varphi_qe^{-(t-t')/\tau_q}\dot{\boldsymbol u}(t')\,dt'
\label{eq:internal}
\end{align}
for $q=1,\ldots,N_\varphi$, and note that these are zero at $t=0$. Using \eqref{eq:stressrelax:exponential} in \eqref{eq:consti:visco},
and the Leibniz integral rule leads us to two alternative forms of the constitutive equation,
\begin{align}
\ushort{\boldsymbol\sigma}(\boldsymbol u(t))
&=
\ushort{{\boldsymbol {D}}}\ushort{{\boldsymbol\varepsilon}}\bigg(\varphi_0\boldsymbol u(t)+\sum_{q=1}^{N_\varphi}\boldsymbol{\zeta}_q(t)\bigg)+\sum_{q=1}^{N_\varphi} \varphi_qe^{-t/\tau_q}\ushort{{\boldsymbol {D}}}\ushort{{\boldsymbol\varepsilon}}(\boldsymbol u_0),
\label{Constitutive:velo}
\\
\text{or }\qquad
\ushort{\boldsymbol\sigma}(\boldsymbol u(t))
&= \ushort{\boldsymbol D}\ushort{\boldsymbol\varepsilon}\bigg(\boldsymbol u(t)-\sum_{q=1}^{N_\varphi}\boldsymbol{\psi}_q(t)\bigg),\label{Constitutive:disp}
\end{align}
where we noted that 
$\boldsymbol{\zeta}_q(t)+\varphi_q e^{-t/\tau_q}\boldsymbol{u}(0)
=\varphi_q\boldsymbol{u}(t) - \boldsymbol{\psi}_q(t)$.
We call $\boldsymbol{\psi}_q$ (resp. $\boldsymbol{\zeta}_q$) the \textit{displacement form}
(resp. \textit{velocity form}) of the internal variable. It is easy to check that \eqref{Constitutive:disp} and \eqref{Constitutive:velo} are equivalent by integration by parts
and remembering that $\sumq\varphi_q=1-\varphi_0$.
Using these constitutive relationships we can derive dynamic linear viscoelasticity model
problems in two forms as follows.

\paragraph*{Displacement form}
We consider \eqref{eq:primal:visco} in the form:
find $\boldsymbol{u}$, $\boldsymbol{\psi}_1$, $\boldsymbol{\psi}_2$, \ldots, such that 
\begin{align}
&\rho \ddot{\boldsymbol u}-\nabla\cdot\ushort{\boldsymbol D}\ushort{\boldsymbol\varepsilon}\left(\boldsymbol{u}-\sum_{q=1}^{N_\varphi}\boldsymbol{\psi}_q\right)=\boldsymbol f,\label{primal:internalDisp}
\\
&\tau_q\dot{\boldsymbol{\psi}}_q+\boldsymbol{\psi}_q=\varphi_q\boldsymbol{u},
\text{ with }\boldsymbol{\psi}(0)=\boldsymbol{0},
\qquad\textrm{ for }q=1,\ldots,\Nphi.\label{primal:ode:disp}
\end{align}

\paragraph*{Velocity form}
We consider \eqref{eq:primal:visco} in the form:
\begin{align}
&\rho \ddot{\boldsymbol u}-\nabla\cdot\ushort{{\boldsymbol {D}}}\ushort{{\boldsymbol\varepsilon}}\left(\varphi_0\boldsymbol u+\sum_{q=1}^{N_\varphi}\boldsymbol{\zeta}_q\right)
=
\boldsymbol f+\nabla\cdot\left(\sum_{q=1}^{N_\varphi} \varphi_qe^{-t/\tau_q}\ushort{{\boldsymbol {D}}}\ushort{{\boldsymbol\varepsilon}}(\boldsymbol u_0)\right),
\label{primal:internalVelo}
\\
&\tau_q\dot{\boldsymbol{\zeta}}_q+\boldsymbol{\zeta}_q=\tau_q\varphi_q\dot{\boldsymbol{u}}
\text{ with }\boldsymbol{\zeta}(0)=\boldsymbol{0},
\qquad\textrm{for }q=1,\ldots,\Nphi.\label{primal:ode:velo}
\end{align}
In these we note that the auxiliary equations \eqref{primal:ode:disp} and \eqref{primal:ode:velo} are derived by 
time-differentiation of each of \eqref{eq:internal}.

\begin{rmk}
The (well known) idea here is to replace the integral form in the constitutive hereditary
laws by ODE's for a set of internal variables. With these we use a time stepper rather than numerical
integration to compute the discrete solution. Furthermore, while \textnormal{\cite{DGV}} employed internal
variables as second order tensors, we have defined vector-valued internal variables.
This reduces the computer memory requirement by a factor of $d$.
\end{rmk}

\subsection{Variational Formulation}
We begin by multiplying each of \eqref{primal:internalDisp} and \eqref{primal:internalVelo}
by test functions $\boldsymbol{v}\in H^s(\mathcal{E}_h)$, for $s>3/2$, integrating
by parts over each $E$, summing over every $E$, and then collecting up terms to form the
edge average and jump terms. We then assume sufficient spatial continuity of the continuous
solution to \eqref{primal:internalDisp} and \eqref{primal:ode:velo} so that we can add the
terms involving the edge jumps of the exact solution: 
terms three and four on the right of \eqref{eq:DG_biform}. This produces the DG bilinear form
on the left hand side and gives us a weak form for each choice of the form of the internal 
variables. The procedure is standard in the DGFEM literature, and for more details in this
specific setting we refer to \textnormal{\cite{jang2020spatially}}
(where it may be useful to note that $\ushort{\boldsymbol{\sigma}}\cdot\boldsymbol{n}\cdot\boldsymbol{v}=\ushort{\boldsymbol{\sigma}}:(\boldsymbol{v}\otimes\boldsymbol{n})$).

\paragraph*{Weak forms}
Let us define linear forms by
\[
F_d(t;\boldsymbol v)
=\Lnorm{\boldsymbol f(t)}{\boldsymbol v}+
\Gnorm{\boldsymbol g_N(t)}{\boldsymbol v},
\]
and
\[F_v(t;\boldsymbol v)=\Lnorm{\boldsymbol f(t)}{\boldsymbol v}+\Gnorm{\boldsymbol g_N(t)}{\boldsymbol v}-\sumq\varphi_qe^{-t/\tau_q}\DGnorm{\boldsymbol u_0}{\boldsymbol v}.\]
Recalling \eqref{eq:DG_biform}, we now use these linear forms to formulate variational problems for
the displacement form and the velocity form. First for the displacement form of the problem\ldots 

\textbf{Displacement Form problem, (D)}
find $\boldsymbol{u}$ and $\{\boldsymbol{\psi}_q\}_{q=1}^{\Nphi}$
such that for all $\boldsymbol{v}\in\Dgv$, with $s>3/2$, we have,
\begin{gather}
    \Lnorm{\rho\ddot{\boldsymbol{u}}(t)}{\boldsymbol{v}}+\DGnorm{\boldsymbol{u}(t)}{\boldsymbol v}-\sumq\DGnorm{\boldsymbol{\psi}_q(t)}{\boldsymbol v}+\jump{\dot{\boldsymbol{ u}}(t)}{\boldsymbol{v}}=F_d(t;\boldsymbol v),\label{eq:vector:dgfem:s1:primal1}\\
    \DGnorm{\tau_q\dot{\boldsymbol{\psi}}_q(t)+{\boldsymbol{\psi}}_q(t)}{{\boldsymbol{v}}}=\DGnorm{\varphi_q\boldsymbol{u}(t)}{\boldsymbol{v}},\ \text{for each $q$,}\label{eq:vector:dgfem:s1:primal2}
\end{gather}
where $\boldsymbol{u}(0)=\boldsymbol{u}_0$, $\dot{\boldsymbol{u}}(0)=\boldsymbol{w}_0$ and $\boldsymbol{\psi}_q(0)=\boldsymbol{0}$.
And, secondly, for the velocity form\ldots

\textbf{Velocity Form problem, (V)}
find $\boldsymbol{u}$ and $\{\boldsymbol{\zeta}_q\}_{q=1}^{\Nphi}$ such that
for all $\boldsymbol{v}\in\Dgv$, with $s>3/2$, we have,
\begin{gather}
    \hspace{-0.4cm}\Lnorm{\rho\ddot{\boldsymbol{u}}(t)}{\boldsymbol{v}}+\varphi_0\DGnorm{\boldsymbol{u}(t)}{\boldsymbol v}+\sumq\DGnorm{\boldsymbol{\zeta}_q(t)}{\boldsymbol v}+\jump{\dot{\boldsymbol{ u}}(t)}{\boldsymbol{v}}=F_v(t;\boldsymbol v),\label{eq:vector:dgfem:s2:primal1}\\
    \DGnorm{\tau_q\dot{\boldsymbol{\zeta}}_q(t)+{\boldsymbol{\zeta}}_q(t)}{{\boldsymbol{v}}}=\DGnorm{\tau_q\varphi_q\dot{\boldsymbol{u}}(t)}{\boldsymbol{v}},\ \text{for each $q$,}\label{eq:vector:dgfem:s2:primal2}
\end{gather} where $\boldsymbol{u}(0)=\boldsymbol{u}_0$, $\dot{\boldsymbol{u}}(0)=\boldsymbol{w}_0$ and $\boldsymbol{\zeta}_q(0)=\boldsymbol{0}$.

\paragraph*{Fully discrete formulations}
For the time discretisation we employ a \textit{Crank-Nicolson} type finite difference
scheme \cite{Crank1996}. Let $t_n=n\Delta t$ with $\Delta t = T/N$ for some
$N\in\mathbb{N}$, and denote the fully discrete approximations to
$\boldsymbol u$ and $\dot{\boldsymbol{u}}$
by $\boldsymbol u(t_n)=\boldsymbol u^n\approx \boldsymbol{U}_h^n\in \Dgvh$ and
$\dot{\boldsymbol {u}}(t_n)=\dot{\boldsymbol {u}}^n\approx \boldsymbol W_h^n\in \Dgvh$.
Similarly, for the internal variables we introduce
$\boldsymbol{\psi}_q(t_n)\approx\boldsymbol{\Psi}_{hq}^n\in\Dgvh$
and $\boldsymbol{\zeta}_q(t_n)\approx\boldsymbol{\mathcal{S}}_{hq}^n\in\Dgvh$, for each $q$.
Furthermore, in the schemes that follow we will use the following approximations, 
\[
\frac{\ddot {\boldsymbol {u}}(t_{n+1})+\ddot {\boldsymbol {u}}(t_{n})}{2}\approx\frac{\boldsymbol W^{n+1}_h-\boldsymbol W^{n}_h}{\Delta t}
\textrm{ and }
\frac{ \boldsymbol u(t_{n+1})+ \boldsymbol u(t_{n})}{2}\approx\frac{\boldsymbol U^{n+1}_h+\boldsymbol U^n_h}{2},
\]
and we will impose the relation,
\begin{equation}\label{r1}
  \frac{\boldsymbol W^{n+1}_h+\boldsymbol W^{n}_h}{2}=\frac{\boldsymbol U^{n+1}_h-\boldsymbol U^n_h}{\Delta t}.  
\end{equation}
We can now give the fully discrete schemes.

\textbf{Displacement Form problem, $\textbf{(D)}^h$}
find $\boldsymbol{W}_h^{n}$, $\boldsymbol{U}_h^{n}$, $\boldsymbol{\Psi}_{h1}^n,\ldots,\boldsymbol{\Psi}_{h\Nphi}^n\in\Dgvh$ for $n=0,\ldots,N$ such that for all $\boldsymbol{v}\in\Dgvh$
we have for $n=0,1,\ldots,N-1$ firstly that,
\begin{gather}
     \Lnorm{\rho\frac{\boldsymbol{W}_h^{n+1}-\boldsymbol{W}_h^{n}}{\Delta t}}{\boldsymbol{v}}+\DGnorm{\frac{\boldsymbol{U}_h^{n+1}+\boldsymbol{U}_h^{n}}{2}}{\boldsymbol v}-\sumq\DGnorm{\frac{\boldsymbol{\Psi}_{hq}^{n+1}+\boldsymbol{\Psi}_{hq}^{n}}{2}}{\boldsymbol v}\nonumber\\+\jump{\frac{\boldsymbol{W}_h^{n+1}+\boldsymbol{W}_h^{n}}{2}}{\boldsymbol{v}}=\frac{1}{2}\Big(F^{n+1}_d(\boldsymbol v)+F^n_d(\boldsymbol v)\Big),
    \label{eq:vector:dgfem:s1e1}
\end{gather}
and secondly that for $q=1,2,\ldots,\Nphi$,
\begin{gather}
     \DGnorm{\tau_q\frac{\boldsymbol{\Psi}_{hq}^{n+1}-\boldsymbol{\Psi}_{hq}^{n}}{\Delta t}+\frac{\boldsymbol{\Psi}_{hq}^{n+1}+\boldsymbol{\Psi}_{hq}^{n}}{2}}{{\boldsymbol{v}}}=\DGnorm{\varphi_q\frac{\boldsymbol{U}_h^{n+1}+\boldsymbol{U}_h^{n}}{2}}{\boldsymbol{v}},\label{eq:vector:dgfem:s1e2}
\\
\intertext{with initial data}     
    \label{eq:vector:dgfem:s1e3}\DGnorm{\boldsymbol{U}_h^{0}}{\boldsymbol{v}}=\DGnorm{\boldsymbol{u}_0}{\boldsymbol{v}},
\\
\label{eq:vector:dgfem:s1e4}\Lnorm{\boldsymbol{W}_h^{0}}{\boldsymbol{v}}=\Lnorm{\boldsymbol{w}_0}{\boldsymbol{v}},
\\
\boldsymbol{\Psi}_{hq}^0=\boldsymbol{0}, \ \forall{q}=1, \ldots,\Nphi,
\label{eq:vector:dgfem:s1e5}
\end{gather}
and where $F^n_d(\boldsymbol v)=F_d(t_n;\boldsymbol v)$.

\textbf{Velocity Form problem, $\textbf{(V)}^h$}
find $\boldsymbol{W}_h^{n}$, $\boldsymbol{U}_h^{n}$, $\boldsymbol{\mathcal{S}}_{h1}^n,\ldots,\boldsymbol{\mathcal{S}}_{h\Nphi}^n\in\Dgvh$ for $n=0,\ldots,N$ such that for all
$\boldsymbol{v}\in\Dgvh$ we have for $n=0,1,\ldots,N-1$ firstly that,
\begin{gather}
    \Lnorm{\rho\frac{\boldsymbol{W}_h^{n+1}-\boldsymbol{W}_h^{n}}{\Delta t}}{\boldsymbol{v}}+\varphi_0\DGnorm{\frac{\boldsymbol{U}_h^{n+1}+\boldsymbol{U}_h^{n}}{2}}{\boldsymbol v}+\sumq\DGnorm{\frac{\boldsymbol{\mathcal{S}}_{hq}^{n+1}+\boldsymbol{\mathcal{S}}_{hq}^{n}}{2}}{\boldsymbol v}\nonumber\\
    \label{eq:vector:dgfem:s2e1}+\jump{\frac{\boldsymbol{W}_h^{n+1}+\boldsymbol{W}_h^{n}}{2}}{\boldsymbol{v}}=\frac{1}{2}\Big( F_v^{n+1}(\boldsymbol v)+ F_v^n(\boldsymbol v)\Big),
    \end{gather}
and secondly that for $q=1,2,\ldots,\Nphi$,
    \begin{gather}
    \DGnorm{\tau_q\frac{\boldsymbol{\mathcal{S}}_{hq}^{n+1}-\boldsymbol{\mathcal{S}}_{hq}^{n}}{\Delta t}+\frac{\boldsymbol{\mathcal{S}}_{hq}^{n+1}+\boldsymbol{\mathcal{S}}_{hq}^{n}}{2}}{{\boldsymbol{v}}}=\DGnorm{\tau_q\varphi_q\frac{\boldsymbol{W}_h^{n+1}+\boldsymbol{W}_h^{n}}{2}}{\boldsymbol{v}},\label{eq:vector:dgfem:s2e2}
\\
\intertext{with initial data \eqref{eq:vector:dgfem:s1e3}, \eqref{eq:vector:dgfem:s1e4},}     
%     \label{eq:vector:dgfem:s2e3}\DGnorm{\boldsymbol{U}_h^{0}}{\boldsymbol{v}}=\DGnorm{\boldsymbol{u}_0}{\boldsymbol{v}},\\
% \label{eq:vector:dgfem:s2e4}\Lnorm{\boldsymbol{W}_h^{0}}{\boldsymbol{v}}=\Lnorm{\boldsymbol{w}_0}{\boldsymbol{v}},\\
\boldsymbol{\mathcal{S}}_{hq}^0=\boldsymbol{0}, \ \forall{q}=1, \ldots,\Nphi,
\label{eq:vector:dgfem:s2e5}
\end{gather}
and where $F^n_v(\boldsymbol v)=F_v(t_n;\boldsymbol v)$.

\section{Stability Analysis}\label{sec:stabilityanalysis}
In this section we derive stability bounds for the solutions to the fully discrete displacement
and velocity forms of the problem. As the problems are linear, these stability bounds can also be
used to conclude the existence and uniqueness of the numerical solutions and,
hence, the well-posedness of the discrete problems. 

\begin{thm}[Stability bound for $\textbf{(D)}^h$]\label{thm:vector:dgfem:s1:stability}
If $\beta_0(d-1)\geq1$ and $\alpha_0$ is large enough, and if
$\boldsymbol{W}_h^{n}$, $\boldsymbol{U}_h^{n}$ and $\{\boldsymbol{\Psi}_{hq}^n\}_{q=1}^{\Nphi}$
in $\Dgvh$ satisfy the fully discrete formulation $\textnormal{\textbf{(D)}}^h$, then 
there exists a positive constant $C$ such that
\begin{gather*}
    \max_{0\leq n\leq N}\llnorm{\boldsymbol W^{n}_h}^2+\max_{0\leq n\leq N}\enorm{\boldsymbol U^{n}_h}^2+\sum_{q=1}^{N_\varphi}\max_{0\leq n\leq N}\enorm{\boldsymbol \Psi^{n}_{hq}}^2+\sum_{n=0}^{N-1}\sum_{q=1}^{N_\varphi}\frac{1}{\Delta t}\enorm{\boldsymbol \Psi^{n+1}_{hq}-\boldsymbol \Psi^{n}_{hq}}^2\\+\Delta t\sum_{n=0}^{N-1}\jump{\boldsymbol W_h^{n+1}+\boldsymbol W_h^n}{\boldsymbol W_h^{n+1}+\boldsymbol W_h^n}\\
    \leq CT^2\bigg(\llnorm{\boldsymbol w_0}^2+\enorm{\boldsymbol u_0}^2+\ilnorm{\boldsymbol f}^2+h^{-1}\hgnorm{\boldsymbol g_N}^2\bigg).
\end{gather*}
Here, $C$ is independent of the discrete solutions, $\Delta t$, and $h$, but dependent on the domain
$\Omega$, its boundary, and the material properties.
\end{thm}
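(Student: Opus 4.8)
The plan is to run a discrete energy argument in which the accumulation in time is treated by \emph{exact} summation‑by‑parts in the step index, not by a Gr\"onwall iteration, so that the data enter only through $\sum_n\Delta t\le T$ and its square. First I would test the momentum relation \eqref{eq:vector:dgfem:s1e1} with $\boldsymbol{v}=\tfrac12(\boldsymbol{W}_h^{n+1}+\boldsymbol{W}_h^{n})\in\Dgvh$ and invoke the imposed identity \eqref{r1} to rewrite this as $(\boldsymbol{U}_h^{n+1}-\boldsymbol{U}_h^{n})/\Delta t$. Then the inertial term becomes $\tfrac{\rho}{2\Delta t}(\llnorm{\boldsymbol{W}_h^{n+1}}^2-\llnorm{\boldsymbol{W}_h^{n}}^2)$, the stiffness term $\DGnorm{\tfrac12(\boldsymbol{U}_h^{n+1}+\boldsymbol{U}_h^{n})}{\boldsymbol{v}}$ becomes $\tfrac1{2\Delta t}(\DGnorm{\boldsymbol{U}_h^{n+1}}{\boldsymbol{U}_h^{n+1}}-\DGnorm{\boldsymbol{U}_h^{n}}{\boldsymbol{U}_h^{n}})$ by symmetry of $a$, the jump term becomes $\tfrac14\jump{\boldsymbol{W}_h^{n+1}+\boldsymbol{W}_h^{n}}{\boldsymbol{W}_h^{n+1}+\boldsymbol{W}_h^{n}}\ge0$, and there remains the internal‑variable coupling $-\sumq\DGnorm{\tfrac12(\boldsymbol{\Psi}_{hq}^{n+1}+\boldsymbol{\Psi}_{hq}^{n})}{(\boldsymbol{U}_h^{n+1}-\boldsymbol{U}_h^{n})/\Delta t}$. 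Multiplying through by $\Delta t$ and summing over $n=0,\dots,m-1$ telescopes the first three contributions, for each $m\in\{1,\dots,N\}$.

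The crux is the coupling term. Because, by Theorem~\ref{thm:coer_conti}, $\DGnorm{\cdot}{\cdot}$ is coercive (hence an inner product) on $\Dgvh$, the auxiliary relation \eqref{eq:vector:dgfem:s1e2} is equivalent to the identity $\tfrac{\tau_q}{\Delta t}(\boldsymbol{\Psi}_{hq}^{n+1}-\boldsymbol{\Psi}_{hq}^{n})+\tfrac12(\boldsymbol{\Psi}_{hq}^{n+1}+\boldsymbol{\Psi}_{hq}^{n})=\tfrac{\varphi_q}{2}(\boldsymbol{U}_h^{n+1}+\boldsymbol{U}_h^{n})$ holding \emph{in} $\Dgvh$. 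I would then use the ``midpoint'' summation‑by‑parts formula $\sum_{n=0}^{m-1}\DGnorm{\tfrac12(\boldsymbol{\Psi}_{hq}^{n+1}+\boldsymbol{\Psi}_{hq}^{n})}{\boldsymbol{U}_h^{n+1}-\boldsymbol{U}_h^{n}}=\DGnorm{\boldsymbol{\Psi}_{hq}^{m}}{\boldsymbol{U}_h^{m}}-\sum_{n=0}^{m-1}\DGnorm{\boldsymbol{\Psi}_{hq}^{n+1}-\boldsymbol{\Psi}_{hq}^{n}}{\tfrac12(\boldsymbol{U}_h^{n+1}+\boldsymbol{U}_h^{n})}$ (which uses $\boldsymbol{\Psi}_{hq}^0=\boldsymbol{0}$ and the symmetry of $a$), and replace $\tfrac12(\boldsymbol{U}_h^{n+1}+\boldsymbol{U}_h^{n})$ in the remaining sum by the identity above. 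Each summand then splits into $\tfrac{\tau_q}{\varphi_q}\tfrac1{\Delta t}\DGnorm{\boldsymbol{\Psi}_{hq}^{n+1}-\boldsymbol{\Psi}_{hq}^{n}}{\boldsymbol{\Psi}_{hq}^{n+1}-\boldsymbol{\Psi}_{hq}^{n}}$ plus $\tfrac1{2\varphi_q}(\DGnorm{\boldsymbol{\Psi}_{hq}^{n+1}}{\boldsymbol{\Psi}_{hq}^{n+1}}-\DGnorm{\boldsymbol{\Psi}_{hq}^{n}}{\boldsymbol{\Psi}_{hq}^{n}})$, so that, after summing over $n$ and $q$, the coupling term contributes exactly $-\sumq\DGnorm{\boldsymbol{\Psi}_{hq}^{m}}{\boldsymbol{U}_h^{m}}$, the non‑negative ``dissipation'' $\sumq\tfrac{\tau_q}{\varphi_q}\sum_{n=0}^{m-1}\tfrac1{\Delta t}\DGnorm{\boldsymbol{\Psi}_{hq}^{n+1}-\boldsymbol{\Psi}_{hq}^{n}}{\boldsymbol{\Psi}_{hq}^{n+1}-\boldsymbol{\Psi}_{hq}^{n}}$, and $\sumq\tfrac1{2\varphi_q}\DGnorm{\boldsymbol{\Psi}_{hq}^{m}}{\boldsymbol{\Psi}_{hq}^{m}}$.

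The decisive step is a completion of squares at the top level: the ``displacement/internal'' terms collect into
\[
\begin{aligned}
Q^m &:=\tfrac12\DGnorm{\boldsymbol{U}_h^{m}}{\boldsymbol{U}_h^{m}}-\sumq\DGnorm{\boldsymbol{\Psi}_{hq}^{m}}{\boldsymbol{U}_h^{m}}+\sumq\tfrac{1}{2\varphi_q}\DGnorm{\boldsymbol{\Psi}_{hq}^{m}}{\boldsymbol{\Psi}_{hq}^{m}}\\
&=\tfrac{\varphi_0}{2}\DGnorm{\boldsymbol{U}_h^{m}}{\boldsymbol{U}_h^{m}}+\sumq\tfrac{1}{2\varphi_q}\DGnorm{\boldsymbol{\Psi}_{hq}^{m}-\varphi_q\boldsymbol{U}_h^{m}}{\boldsymbol{\Psi}_{hq}^{m}-\varphi_q\boldsymbol{U}_h^{m}},
\end{aligned}
\]
where I used $\sumq\varphi_q=1-\varphi_0$. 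Since $\DGnorm{\cdot}{\cdot}$ is an inner product on $\Dgvh$ and --- crucially --- $\varphi_0>0$ (the assumption that the material is a solid, not a fluid), $Q^m\ge\tfrac{\varphi_0}{2}\DGnorm{\boldsymbol{U}_h^{m}}{\boldsymbol{U}_h^{m}}\ge0$; the triangle inequality together with coercivity then gives $Q^m\ge c\big(\enorm{\boldsymbol{U}_h^{m}}^2+\sumq\enorm{\boldsymbol{\Psi}_{hq}^{m}}^2\big)$ with $c>0$ depending only on $\varphi_0$, the $\varphi_q$ and the material. Hence the summed identity expresses $\tfrac\rho2\llnorm{\boldsymbol{W}_h^{m}}^2+Q^m$ plus the dissipation sum and $\tfrac14\sum_{n=0}^{m-1}\Delta t\,\jump{\boldsymbol{W}_h^{n+1}+\boldsymbol{W}_h^{n}}{\boldsymbol{W}_h^{n+1}+\boldsymbol{W}_h^{n}}$ in terms of the initial data $\tfrac\rho2\llnorm{\boldsymbol{W}_h^{0}}^2+\tfrac12\DGnorm{\boldsymbol{U}_h^{0}}{\boldsymbol{U}_h^{0}}$ and the forcing; its left‑hand side already dominates every term on the left of the assertion once the maximum over $m$ is taken (the dissipation and jump sums being monotone in $m$).

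It remains to bound the right‑hand side. The initial terms are controlled by stability of the $L_2$‑projection, $\llnorm{\boldsymbol{W}_h^{0}}\le\llnorm{\boldsymbol{w}_0}$ from \eqref{eq:vector:dgfem:s1e4}, and of the elliptic projection $\dgelliptic$, $\DGnorm{\boldsymbol{U}_h^{0}}{\boldsymbol{U}_h^{0}}\le C\enorm{\boldsymbol{u}_0}^2$ from \eqref{eq:vector:dgfem:s1e3} with Theorem~\ref{thm:coer_conti} and \eqref{vector:appDG}. For the forcing, writing $\Delta t\,\tfrac12(\boldsymbol{W}_h^{n+1}+\boldsymbol{W}_h^{n})=\boldsymbol{U}_h^{n+1}-\boldsymbol{U}_h^{n}$ when convenient: the body‑force part is estimated by Cauchy--Schwarz, $\llnorm{\tfrac12(\boldsymbol{W}_h^{n+1}+\boldsymbol{W}_h^{n})}\le\max_{0\le n\le N}\llnorm{\boldsymbol{W}_h^{n}}$, the crude bound $\sum_{n=0}^{m-1}\Delta t\le T$, and Young's inequality, which yields $\tfrac14\max_n\llnorm{\boldsymbol{W}_h^{n}}^2+CT^2\ilnorm{\boldsymbol{f}}^2$ --- it is precisely here, where $\sum_n\Delta t\le T$ is used in place of a discrete Gr\"onwall step, that the constant is $O(T^2)$ and not exponential. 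For the traction part the inverse trace inequality \eqref{trace:inv} bounds the $L_2(\Gamma_N)$‑norm of a discrete function by $Ch^{-1/2}$ times its $\llnorm{\cdot}$ (resp.\ $\enorm{\cdot}$) norm --- hence the $h^{-1}$ --- and, after a summation by parts in time (legitimate since $\boldsymbol{g}_N\in C^1(0,T;\boldsymbol{L}_2(\Gamma_N))$), Cauchy--Schwarz in time and Young's inequality, this contributes $\tfrac14\max_n(\cdots)+CT^2h^{-1}\hgnorm{\boldsymbol{g}_N}^2$. Taking the maximum over $m$, absorbing the $\tfrac14\max_n(\cdots)$ terms into the left side, and using that $\alpha_0$ is large enough that $\kappa>0$, delivers the stated bound. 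The genuinely delicate parts are the midpoint summation‑by‑parts rearrangement of the internal‑variable coupling and the ensuing completion of squares that makes $Q^m$ non‑negative; everything else is routine bookkeeping.
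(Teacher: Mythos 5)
Your proposal is correct and follows essentially the same route as the paper: the same test functions (up to scaling by $\Delta t$), the same telescoping and summation-by-parts treatment of the internal-variable coupling via \eqref{eq:vector:dgfem:s1e2}, and the same handling of the body-force and traction terms with $\sum_n\Delta t\le T$ replacing Gr\"onwall. The only substantive difference is cosmetic: where the paper bounds the residual cross term $2\sumq\DGnorm{\boldsymbol\Psi^m_{hq}}{\boldsymbol U^m_h}$ by Young's inequality with the tuned weights $\epsilon_q=\varphi_q+\varphi_0/(2\Nphi)$ (together with Corollary~\ref{cor:estimate_interior} to pass between $\DGnorm{\boldsymbol v}{\boldsymbol v}$ and $\enorm{\boldsymbol v}^2$), you absorb it exactly by completing the square using $\sumq\varphi_q=1-\varphi_0$ and then invoke coercivity --- an equivalent, slightly sharper version of the same step.
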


\begin{proof}
Choose $m\in\mathbb{N}$ so that $m<N$.
Take $\boldsymbol v=\Delta t(\boldsymbol W^{n+1}_h+\boldsymbol W^n_h)$ in
\eqref{eq:vector:dgfem:s1e1} and
$\boldsymbol v=2(\boldsymbol \Psi_{hq}^{n+1}-\boldsymbol \Psi_{hq}^{n})$ in \eqref{eq:vector:dgfem:s1e2}, for each $q$, and for $n=0,\ldots,m-1$, and then add the results and
sum over $n$ to get,
\begin{align}
    {\rho}&\llnorm{\boldsymbol W^{m}_h}^2+\DGnorm{\boldsymbol U^{m}_h}{\boldsymbol U^{m}_h}+\sum_{q=1}^{N_\varphi}\frac{1}{\varphi_q}\DGnorm{\boldsymbol \Psi^{m}_{hq}}{\boldsymbol \Psi^{m}_{hq}}\nonumber+\sum_{n=0}^{m-1}\sum_{q=1}^{N_\varphi}\frac{2\tau_q}{\Delta t\varphi_q}\DGnorm{\boldsymbol \Psi^{n+1}_{hq}-\boldsymbol \Psi^{n}_{hq}}{\boldsymbol \Psi^{n+1}_{hq}-\boldsymbol \Psi^{n}_{hq}}\nonumber\\&+\frac{\Delta t}{2}\sum_{n=0}^{m-1}\jump{\boldsymbol W_h^{n+1}+\boldsymbol W_h^n}{\boldsymbol W_h^{n+1}+\boldsymbol W_h^n}\nonumber\\
    =&{\rho}\llnorm{\boldsymbol W^{0}_h}^2+\DGnorm{\boldsymbol U^{0}_h}{\boldsymbol U^{0}_h}+2\sum_{q=1}^{N_\varphi}\DGnorm{\boldsymbol \Psi^{m}_{hq}}{\boldsymbol U^{m}_h}\nonumber\\&+\frac{\Delta t}{2}\sum_{n=0}^{m-1}\left( F^{n+1}_d(\boldsymbol W^{n+1}_h+\boldsymbol W^{n}_h)+F^n_d(\boldsymbol W^{n+1}_h+\boldsymbol W^{n}_h)\right)
    \label{eq:vector:dgfem:sta:s1:eq1}
\end{align}
since, by \eqref{r1},
$\Delta t(\boldsymbol W^{n+1}_h+\boldsymbol W^n_h)
=2(\boldsymbol U^{n+1}_h-\boldsymbol{U}^n_h)$ and, by \eqref{eq:vector:dgfem:s1e5},
$\boldsymbol \Psi_{hq}^0=\boldsymbol0$, and also where we noted that
\[
\DGnorm{\boldsymbol \Psi^{n+1}_{hq}+\boldsymbol \Psi^{n}_{hq}}{\boldsymbol U^{n+1}_h-\boldsymbol U^{n}_h}
=
2\DGnorm{\boldsymbol \Psi^{n+1}_{hq}}{\boldsymbol U^{n+1}_h}
-2\DGnorm{\boldsymbol \Psi^{n}_{hq}}{\boldsymbol U^{n}_h}
-\DGnorm{\boldsymbol \Psi^{n+1}_{hq}-\boldsymbol \Psi^{n}_{hq}}{\boldsymbol U^{n+1}_h+\boldsymbol U^{n}_h}
\]
which, when used with \eqref{eq:vector:dgfem:s1e2}, gives us,
\begin{gather*}
    \DGnorm{\boldsymbol \Psi^{n+1}_{hq}+\boldsymbol \Psi^{n}_{hq}}{\boldsymbol U^{n+1}_h-\boldsymbol U^{n}_h}
    =2\DGnorm{\boldsymbol \Psi^{n+1}_{hq}}{\boldsymbol U^{n+1}_h}-2\DGnorm{\boldsymbol \Psi^{n}_{hq}}{\boldsymbol U^{n}_h}
    \\-
    \frac{2\tau_q}{\Delta t\,\varphi_q}\DGnorm{\boldsymbol \Psi^{n+1}_{hq}-\boldsymbol \Psi^{n}_{hq}}{\boldsymbol \Psi^{n+1}_{hq}-\boldsymbol \Psi^{n}_{hq}}
    -\frac{1}{\varphi_q}\left(\DGnorm{\boldsymbol \Psi^{n+1}_{hq}}{\boldsymbol \Psi^{n+1}_{hq}}-\DGnorm{\boldsymbol \Psi^{n}_{hq}}{\boldsymbol \Psi^{n}_{hq}}\right)
\end{gather*}
for each $n$ and $q$. 
Using \eqref{eq:relation:dgnorm}, \eqref{eq:vector:dgfem:sta:s1:eq1} and
Theorem~\ref{thm:coer_conti} in \eqref{eq:vector:dgfem:sta:s1:eq1} we obtain,
% Using \eqref{eq:relation:dgnorm} in $\DGnorm{\boldsymbol U^{m}_h}{\boldsymbol U^{m}_h}$ and $\DGnorm{\boldsymbol \Psi^{m}_{hq}}{\boldsymbol \Psi^{m}_{hq}}$ for each $q$, respectively, and the coercivity in $\DGnorm{\boldsymbol \Psi^{n+1}_{hq}-\boldsymbol \Psi^{n}_{hq}}{\boldsymbol \Psi^{n+1}_{hq}-\boldsymbol \Psi^{n}_{hq}}$ for each $q$, we can rewrite \eqref{eq:vector:dgfem:sta:s1:eq1} as
\begin{gather}
{\rho}\llnorm{\boldsymbol W^{m}_h}^2+\enorm{\boldsymbol U^{m}_h}^2+\sum_{q=1}^{N_\varphi}\frac{1}{\varphi_q}\enorm{\boldsymbol \Psi^{m}_{hq}}^2
\nonumber+\sum_{n=0}^{m-1}\sum_{q=1}^{N_\varphi}\frac{2\kappa\tau_q}{\Delta t\varphi_q}\enorm{\boldsymbol \Psi^{n+1}_{hq}-\boldsymbol \Psi^{n}_{hq}}^2
\nonumber\\
+\frac{\Delta t}{2}\sum_{n=0}^{m-1}\jump{\boldsymbol W_h^{n+1}+\boldsymbol W_h^n}{\boldsymbol W_h^{n+1}+\boldsymbol W_h^n}
\nonumber\\
\leq\bigg|{\rho}\llnorm{\boldsymbol W^{0}_h}^2+\DGnorm{\boldsymbol U^{0}_h}{\boldsymbol U^{0}_h}+2\sum_{q=1}^{N_\varphi}\DGnorm{\boldsymbol \Psi^{m}_{hq}}{\boldsymbol U^{m}_h}
\nonumber\\
+\frac{\Delta t}{2}\sum_{n=0}^{m-1}\left( F^{n+1}_d(\boldsymbol W^{n+1}_h+\boldsymbol W^{n}_h)+F^n_d(\boldsymbol W^{n+1}_h+\boldsymbol W^{n}_h)\right)
\nonumber\\
+2\sume\{\ushort{\boldsymbol D}\ushort{\boldsymbol \varepsilon}(\boldsymbol  {U}^m_h)\}
\colon\njump{\boldsymbol{U}^m_h}de
%\nonumber\\
+\sumq\frac{2}{\varphi_q}\sume\{\ushort{\boldsymbol D}\ushort{\boldsymbol \varepsilon}(\boldsymbol  {\Psi}_{hq}^m)\}:\njump{\boldsymbol{\Psi}_{hq}^m}de\bigg|.
\label{eq:vector:dgfem:sta:s1:eq2}
\end{gather}
Now we show an upper bound for the right hand side of \eqref{eq:vector:dgfem:sta:s1:eq2}.
\begin{itemize}[$\bullet$]
    \item $\llnorm{\boldsymbol W^{0}_h}^2$: By the Cauchy-Schwarz inequality, \eqref{eq:vector:dgfem:s1e4} yields
    \[\llnorm{\boldsymbol W^{0}_h}^2=\Lnorm{\boldsymbol W^{0}_h}{\boldsymbol W^{0}_h}=\Lnorm{\boldsymbol W^{0}_h}{\boldsymbol w_0}\leq\llnorm{\boldsymbol W^{0}_h}\llnorm{\boldsymbol w_0},\]
    hence $\llnorm{\boldsymbol W^{0}_h}^2\leq\llnorm{\boldsymbol w_0}^2$.
    \item $\big|\DGnorm{\boldsymbol U^{0}_h}{\boldsymbol U^{0}_h}\big|$: Combining the coercivity and the continuity, we can derive
    \[\kappa\enorm{\boldsymbol U^{0}_h}^2\leq\DGnorm{\boldsymbol U^{0}_h}{\boldsymbol U^{0}_h}=\DGnorm{\boldsymbol U^{0}_h}{\boldsymbol u_0}\leq K\enorm{\boldsymbol U^{0}_h}\enorm{\boldsymbol u_0},\]
    by \eqref{eq:vector:dgfem:s1e3}. This implies $\enorm{\boldsymbol U^{0}_h}\leq K/\kappa\enorm{\boldsymbol u_0}$
    and so we have
    \[\big|\DGnorm{\boldsymbol U^{0}_h}{\boldsymbol U^{0}_h}\big|\leq K\enorm{\boldsymbol U^{0}_h}^2\leq\bar K\enorm{\boldsymbol u_0}^2\qquad\textrm{for $\bar K=\frac{K^3}{\kappa^2}$}.\]
    \item $\left|2\sum\limits_{q=1}^{N_\varphi}\DGnorm{\boldsymbol \Psi^{m}_{hq}}{\boldsymbol U^{m}_h}\right|$:
    The Cauchy-Schwarz inequality, Young's inequality and \eqref{eq:vector:dg:bdd2} yield
    \begin{align*}
        \bigg|2\sum_{q=1}^{N_\varphi}\DGnorm{\boldsymbol\Psi^{m}_{hq}}{\boldsymbol U^{m}_h}\bigg|\   \leq&\sum_{q=1}^{N_\varphi}(\epsilon_q+\frac{C}{\sqrt{\alpha_0}})\enorm{\boldsymbol U^{m}_h}^2+\sum_{q=1}^{N_\varphi}(\frac{1}{\epsilon_q}+\frac{C}{\sqrt{\alpha_0}})\enorm{\boldsymbol\Psi^{m}_{hq}}^2,
    \end{align*}for any positive $\epsilon_q$, $\forall q$.
    \item $\left|\frac{\Delta t}{2}\sum\limits_{n=0}^{m-1}\left( F^{n+1}_d(\boldsymbol W^{n+1}_h+\boldsymbol W^{n}_h)+F^n_d(\boldsymbol W^{n+1}_h+\boldsymbol W^{n}_h)\right)\right|$: Note that summation by parts and the fundamental theorem of calculus give
\begin{align*}
\sum_{n=0}^{m-1}\big(\boldsymbol g_N(t_{n+1})+\boldsymbol g_N(t_{n}),\boldsymbol U^{n+1}_h-\boldsymbol U^{n}_h\big)_{L_2(e)}
= &
2\big(\boldsymbol g_N(t_{m}),\boldsymbol U^{m}_h\big)_{L_2(e)}-2\big(\boldsymbol g_N(t_{0}),\boldsymbol U^{0}_h\big)_{L_2(e)}
\\
&-\sum_{n=0}^{m-1}\int^{t_{n+1}}_{t_{n}}\big(\dot {\boldsymbol g}_N(t'),\boldsymbol U^{n+1}_h+\boldsymbol U^n_h\big)_{L_2(e)}dt',
\qquad  \forall e\subset\Gamma_N.
\end{align*}
By \eqref{r1} and the Cauchy-Schwarz inequality, we therefore have
\begin{align*}
    \bigg|\frac{\Delta t}{2}&\sum_{n=0}^{m-1}\left( F^{n+1}_d(\boldsymbol W^{n+1}_h+\boldsymbol W^{n}_h)+F^n_d(\boldsymbol W^{n+1}_h+\boldsymbol W^{n}_h)\right)\bigg|\\\leq&\frac{\Delta t}{2}\sum_{n=0}^{m-1}\big(\llnorm{\boldsymbol {f}^{n+1}}+\llnorm{\boldsymbol {f}^n}\big)\llnorm{\boldsymbol W^{n+1}_h+\boldsymbol W^n_h}\\&+2\sum_{e\subset\Gamma_N}\edgelnorm{\boldsymbol  g_N^{m}}{\edgelnorm{\boldsymbol U^{m}_h}}+2\sum_{e\subset\Gamma_N}\edgelnorm{\boldsymbol  g_N^{0}}\edgelnorm{\boldsymbol U^{0}_h}\\&+\sum_{n=0}^{m-1}\int^{t_{n+1}}_{t_{n}}\sum_{e\subset\Gamma_N}\edgelnorm{\dot {\boldsymbol g}_N(t')}\edgelnorm{\boldsymbol U^{n+1}_h+\boldsymbol U^{n}_h}dt'.
\end{align*}
Since the inverse polynomial trace and Poincar\'e's inequalities imply that
\[\edgesum\edgelnorm{\boldsymbol v}^2\leq Ch^{-1}\elementsum\elementlnorm{\boldsymbol{v}}^2\leq Ch^{-1}\enorm{\boldsymbol{v}}^2, \forall \boldsymbol{v}\in\Dgvh,\]
the triangle and Young's inequalities lead us to,
\begin{align*}
    \bigg|\frac{\Delta t}{2}&\sum_{n=0}^{m-1}\left( F^{n+1}_d(\boldsymbol W^{n+1}_h+\boldsymbol W^{n}_h)+F^n_d(\boldsymbol W^{n+1}_h+\boldsymbol W^{n}_h)\right)\bigg|
    \\
    \leq&\frac{\Delta t}{4\epsilon_a}\sum_{n=0}^{m-1}\big(\llnorm{\boldsymbol {f}^{n+1}}^2+\llnorm{\boldsymbol {f}^n}^2\big)
    % \\&
    +{2\Delta t\epsilon_a}\sum_{n=0}^{m-1}\big(\llnorm{\boldsymbol W^{n+1}_h}^2+\llnorm{\boldsymbol W^{n}_h}^2\big)\\&+\frac{1}{\epsilon_b}\gnorm{\boldsymbol  g_N^{m}}^2+\frac{C\epsilon_b}{h}{\enorm{\boldsymbol U^{m}_h}^2}+\frac{1}{h}\gnorm{\boldsymbol  g_N^{0}}^2+C{\enorm{\boldsymbol U^{0}_h}^2}\\&+\frac{1}{\epsilon_b}\int^{t_{m}}_{0}\gnorm{\dot {\boldsymbol g}_N(t')}^2dt'+\frac{C\Delta t\epsilon_b}{2h}\sum_{n=0}^{m-1}(\enorm{\boldsymbol U^{n+1}_h}^2+\enorm{\boldsymbol U^{n}_h}^2),
\end{align*}for any positive $\epsilon_a$ and $\epsilon_b$.
Maximizing over time we then arrive at,
\begin{align*}
    \bigg|\frac{\Delta t}{2}&\sum_{n=0}^{m-1}\left( F^{n+1}_d(\boldsymbol W^{n+1}_h+\boldsymbol W^{n}_h)+F^n_d(\boldsymbol W^{n+1}_h+\boldsymbol W^{n}_h)\right)\bigg|
    \\
    \leq
    &
    \frac{T}{2\epsilon_a}\ilnorm{\boldsymbol {f}}^2+{4T\epsilon_a}\max_{0\leq n\leq N}\llnorm{\boldsymbol W^{n}_h}^2
    +\left(\frac{1}{\epsilon_b}+\frac{1}{h}\right)\ignorm{\boldsymbol  g_N}^2
    \\&
    +C{\enorm{\boldsymbol U^{0}_h}^2}
%    \\&
    +\frac{1}{\epsilon_b}\lgnorm{\dot {\boldsymbol g}_N}^2+\frac{C(T+1)\epsilon_b}{h}\nmax\enorm{\boldsymbol U^{n}_h}^2.
\end{align*}
\item $\left|2\sume\{\ushort{\boldsymbol D}\ushort{\boldsymbol \varepsilon}(\boldsymbol  {U}^m_h)\}:\njump{\boldsymbol{U}^m_h}de\right|$: \eqref{eq:vector:dg:bdd2} implies that
\[\left|2\sume\{\ushort{\boldsymbol D}\ushort{\boldsymbol \varepsilon}(\boldsymbol  {U}^m_h)\}:\njump{\boldsymbol{U}^m_h}de\right|\leq \frac{C}{\sqrt{\alpha_0}}\enorm{\boldsymbol{U}^m_h}^2.\]
\item $\left|\sumq\frac{2}{\varphi_q}\sume\{\ushort{\boldsymbol D}\ushort{\boldsymbol \varepsilon}(\boldsymbol  {\Psi}_{hq}^m)\}:\njump{\boldsymbol{\Psi}_{hq}^m}de\right|$: In the same manner, \eqref{eq:vector:dg:bdd2} yields
\[\left|\sumq\frac{2}{\varphi_q}\sume\{\ushort{\boldsymbol D}\ushort{\boldsymbol \varepsilon}(\boldsymbol  {\Psi}_{hq}^m)\}:\njump{\boldsymbol{\Psi}_{hq}^m}de\right|\leq\sumq\frac{C}{\varphi_q\sqrt{\alpha_0}}\enorm{\boldsymbol{\Psi}_{hq}^m}^2.\]
\end{itemize}
Collecting these results together then gives,
\begin{gather}
    {\rho}\llnorm{\boldsymbol W^{m}_h}^2+\left(1-\sumq\epsilon_q-\frac{C\Nphi}{\sqrt{\alpha_0}}\right)\enorm{\boldsymbol U^{m}_h}^2\nonumber\\+\sum_{q=1}^{N_\varphi}\left(\frac{1}{\varphi_q}-\frac{1}{\epsilon_q}-\frac{C}{\sqrt{\alpha_0}}\right)\enorm{\boldsymbol \Psi^{m}_{hq}}^2+\sum_{n=0}^{m-1}\sum_{q=1}^{N_\varphi}\frac{2\kappa\tau_q}{\Delta t\varphi_q}\enorm{\boldsymbol \Psi^{n+1}_{hq}-\boldsymbol \Psi^{n}_{hq}}^2\nonumber\\+\frac{\Delta t}{2}\sum_{n=0}^{m-1}\jump{\boldsymbol W_h^{n+1}+\boldsymbol W_h^n}{\boldsymbol W_h^{n+1}+\boldsymbol W_h^n}\nonumber\\
    \leq{\rho}\llnorm{\boldsymbol w_0}^2+(\bar K+ CK/\kappa)\enorm{\boldsymbol u_0}^2+\frac{T}{2\epsilon_a}\ilnorm{\boldsymbol {f}}^2\nonumber\\+\left(\frac{1}{\epsilon_b}+\frac{1}{h}\right)\ignorm{\boldsymbol  g_N}^2+\frac{1}{\epsilon_b}\lgnorm{\dot {\boldsymbol g}_N}^2\nonumber\\+{4T\epsilon_a}\max_{0\leq n\leq N}\llnorm{\boldsymbol W^{n}_h}^2+\frac{C(T+1)\epsilon_b}{h}\nmax\enorm{\boldsymbol U^{n}_h}^2.\label{eq:vector:dgfem:sta:s1:eq3}
\end{gather}When we set $\epsilon_q=\varphi_q+\varphi_0/(2\Nphi)>0$ for each $q$, \eqref{eq:vector:dgfem:sta:s1:eq3} gives\begin{gather}
    {\rho}\llnorm{\boldsymbol W^{m}_h}^2+\left(\frac{\varphi_0}{2}-\frac{C\Nphi}{\sqrt{\alpha_0}}\right)\enorm{\boldsymbol U^{m}_h}^2\nonumber\\+\sum_{q=1}^{N_\varphi}\left(\frac{\varphi_0}{2\Nphi\varphi_q^2+\varphi_0\varphi_q}-\frac{C}{\sqrt{\alpha_0}}\right)\enorm{\boldsymbol \Psi^{m}_{hq}}^2+\sum_{n=0}^{m-1}\sum_{q=1}^{N_\varphi}\frac{2\kappa\tau_q}{\Delta t\varphi_q}\enorm{\boldsymbol \Psi^{n+1}_{hq}-\boldsymbol \Psi^{n}_{hq}}^2\nonumber\\+\frac{\Delta t}{2}\sum_{n=0}^{m-1}\jump{\boldsymbol W_h^{n+1}+\boldsymbol W_h^n}{\boldsymbol W_h^{n+1}+\boldsymbol W_h^n}\nonumber\\
    \leq{\rho}\llnorm{\boldsymbol w_0}^2+(\bar K+ CK/\kappa)\enorm{\boldsymbol u_0}^2+\frac{T}{2\epsilon_a}\ilnorm{\boldsymbol {f}}^2\nonumber\\+\left(\frac{1}{\epsilon_b}+\frac{1}{h}\right)\ignorm{\boldsymbol  g_N}^2+\frac{1}{\epsilon_b}\lgnorm{\dot {\boldsymbol g}_N}^2\nonumber\\+{4T\epsilon_a}\max_{0\leq n\leq N}\llnorm{\boldsymbol W^{n}_h}^2+\frac{C(T+1)\epsilon_b}{h}\nmax\enorm{\boldsymbol U^{n}_h}^2.\label{eq:vector:dgfem:sta:s1:eq4}
\end{gather}
After noting that the right hand side of \eqref{eq:vector:dgfem:sta:s1:eq4} is independent of $m$,
and that $m$ is arbitrary, we can obtain
\begin{gather}
    \frac{\rho}{2}\nmax\llnorm{\boldsymbol W^{n}_h}^2
    +\left(\frac{\varphi_0}{4}-\frac{C\Nphi}{\sqrt{\alpha_0}}\right)\nmax\enorm{\boldsymbol U^{n}_h}^2
%    \nonumber\\
    +\sum_{q=1}^{N_\varphi}\left(\frac{\varphi_0}{2\Nphi\varphi_q^2+\varphi_0\varphi_q}-\frac{C}{\sqrt{\alpha_0}}\right)\nmax\enorm{\boldsymbol \Psi^{n}_{hq}}^2
    \nonumber\\
    +\sum_{n=0}^{N-1}\sum_{q=1}^{N_\varphi}\frac{2\kappa\tau_q}{\Delta t\varphi_q}\enorm{\boldsymbol \Psi^{n+1}_{hq}-\boldsymbol \Psi^{n}_{hq}}^2
%    \nonumber\\
    +\frac{\Delta t}{2}\sum_{n=0}^{N-1}\jump{\boldsymbol W_h^{n+1}+\boldsymbol W_h^n}{\boldsymbol W_h^{n+1}+\boldsymbol W_h^n}
    \nonumber\\
    \leq{\rho}\llnorm{\boldsymbol w_0}^2+(\bar K+ CK/\kappa)\enorm{\boldsymbol u_0}^2+\frac{4T^2}{\rho}\ilnorm{\boldsymbol {f}}^2
    \nonumber\\
    +\frac{1}{h}\left(\frac{4C(T+1)}{\varphi_0}+1\right)\ignorm{\boldsymbol  g_N}^2
    +\frac{4C(T+1)}{\varphi_0h}\lgnorm{\dot {\boldsymbol g}_N}^2,
    \label{eq:vector:dgfem:sta:s1:eq5}
\end{gather}where $\epsilon_a=\rho/(8T)$ and $\epsilon_b=\varphi_0h/(4C(T+1))$.
Taking $\alpha_0$ sufficiently large then completes the proof.
\end{proof}
\begin{thm}[Stability bound for $\textbf{(V)}^h$]\label{thm:vector:dgfem:s2:stability}
If $\beta_0(d-1)\geq1$ and $\alpha_0$ is large enough, and if
$\boldsymbol{W}_h^{n}$, $\boldsymbol{U}_h^{n}$
and $\{\boldsymbol{\mathcal{S}}_{hq}^n\}_{q=1}^{\Nphi}$ satisfy the fully discrete formulation 
$\textnormal{\textbf{(V)}}^h$, then there exists a positive constant $C$ such that
\begin{gather*}
    \max_{0\leq n\leq N}\llnorm{\boldsymbol W^{n}_h}^2+\max_{0\leq n\leq N}\enorm{\boldsymbol{\mathcal{S}} ^{n}_h}^2+\sum_{q=1}^{N_\varphi}\max_{0\leq n\leq N}\enorm{\boldsymbol \Psi^{n}_{hq}}^2+\sum_{n=0}^{N-1}\sum_{q=1}^{N_\varphi}{\Delta t}\enorm{\boldsymbol {\mathcal{S}} ^{n+1}_{hq}-\boldsymbol{\mathcal{S}} ^{n}_{hq}}^2\\+\Delta t\sum_{n=0}^{N-1}\jump{\boldsymbol W_h^{n+1}+\boldsymbol W_h^n}{\boldsymbol W_h^{n+1}+\boldsymbol W_h^n}\\
    \leq CT^2\bigg(\llnorm{\boldsymbol w_0}^2+\enorm{\boldsymbol u_0}^2+\ilnorm{\boldsymbol f}^2+h^{-1}\hgnorm{\boldsymbol g_N}^2\bigg).
\end{gather*}
Here, $C$ is independent of discrete solutions, $\Delta t$ and $h$ but dependent on the domain
$\Omega$, its boundary, and the material properties.
\end{thm}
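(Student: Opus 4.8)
The plan is to follow the proof of Theorem~\ref{thm:vector:dgfem:s1:stability} closely, the only structural differences being that in $\textbf{(V)}^h$ the internal variables $\boldsymbol{\mathcal{S}}_{hq}$ are driven by the velocity $\boldsymbol W_h^n$ rather than by the displacement, and that $F_v$ carries the extra term $-\sumq\varphi_qe^{-t/\tau_q}\DGnorm{\boldsymbol u_0}{\boldsymbol v}$. Fix $m\in\mathbb N$ with $m<N$. Take $\boldsymbol v=\Delta t(\boldsymbol W_h^{n+1}+\boldsymbol W_h^{n})$ in \eqref{eq:vector:dgfem:s2e1} and $\boldsymbol v=\boldsymbol{\mathcal{S}}_{hq}^{n+1}+\boldsymbol{\mathcal{S}}_{hq}^{n}$ in \eqref{eq:vector:dgfem:s2e2} for each $q$, sum over $n=0,\dots,m-1$, and add. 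By \eqref{r1}, $\Delta t(\boldsymbol W_h^{n+1}+\boldsymbol W_h^{n})=2(\boldsymbol U_h^{n+1}-\boldsymbol U_h^{n})$, so---using the symmetry of $a(\cdot,\cdot)$ and $\boldsymbol{\mathcal{S}}_{hq}^0=\boldsymbol0$---the inertia term telescopes to $\rho(\llnorm{\boldsymbol W_h^m}^2-\llnorm{\boldsymbol W_h^0}^2)$, the $\varphi_0$ term to $\varphi_0(\DGnorm{\boldsymbol U_h^m}{\boldsymbol U_h^m}-\DGnorm{\boldsymbol U_h^0}{\boldsymbol U_h^0})$, and the jump penalty produces $\tfrac{\Delta t}{2}\sum_{n=0}^{m-1}\jump{\boldsymbol W_h^{n+1}+\boldsymbol W_h^n}{\boldsymbol W_h^{n+1}+\boldsymbol W_h^n}$.

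For the internal-variable coupling term I would \emph{not} substitute $2(\boldsymbol U_h^{n+1}-\boldsymbol U_h^{n})$, but instead use \eqref{eq:vector:dgfem:s2e2} directly (with the test function $\boldsymbol{\mathcal{S}}_{hq}^{n+1}+\boldsymbol{\mathcal{S}}_{hq}^{n}$ just chosen) together with the symmetry of $a(\cdot,\cdot)$, which yields the identity
\begin{align*}
a\!\left(\tfrac{\boldsymbol{\mathcal{S}}_{hq}^{n+1}+\boldsymbol{\mathcal{S}}_{hq}^{n}}{2},\Delta t(\boldsymbol W_h^{n+1}+\boldsymbol W_h^{n})\right)
={}&\tfrac1{\varphi_q}\Big(\DGnorm{\boldsymbol{\mathcal{S}}_{hq}^{n+1}}{\boldsymbol{\mathcal{S}}_{hq}^{n+1}}-\DGnorm{\boldsymbol{\mathcal{S}}_{hq}^{n}}{\boldsymbol{\mathcal{S}}_{hq}^{n}}\Big)\\
&+\tfrac{\Delta t}{2\tau_q\varphi_q}\DGnorm{\boldsymbol{\mathcal{S}}_{hq}^{n+1}+\boldsymbol{\mathcal{S}}_{hq}^{n}}{\boldsymbol{\mathcal{S}}_{hq}^{n+1}+\boldsymbol{\mathcal{S}}_{hq}^{n}},
\end{align*}
and on summation telescopes (again using $\boldsymbol{\mathcal{S}}_{hq}^0=\boldsymbol0$) to $\tfrac1{\varphi_q}\DGnorm{\boldsymbol{\mathcal{S}}_{hq}^{m}}{\boldsymbol{\mathcal{S}}_{hq}^{m}}+\tfrac{\Delta t}{2\tau_q\varphi_q}\sum_{n}\DGnorm{\boldsymbol{\mathcal{S}}_{hq}^{n+1}+\boldsymbol{\mathcal{S}}_{hq}^{n}}{\boldsymbol{\mathcal{S}}_{hq}^{n+1}+\boldsymbol{\mathcal{S}}_{hq}^{n}}$. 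Thus---in contrast with $\textbf{(D)}^h$, where a $\boldsymbol\Psi$--$\boldsymbol U$ cross term had to be split using auxiliary parameters $\epsilon_q$---the discrete internal-variable energy at $t_m$ appears \emph{directly} on the left, together with a nonnegative dissipation; no $\epsilon_q$ are needed here. Converting each $a(\cdot,\cdot)$ to the energy norm through \eqref{eq:relation:dgnorm} and absorbing the residual interior-penalty edge terms by Corollary~\ref{cor:estimate_interior}, bounding $|\DGnorm{\boldsymbol U_h^0}{\boldsymbol U_h^0}|\le C\enorm{\boldsymbol u_0}^2$ via Theorem~\ref{thm:coer_conti} and \eqref{eq:vector:dgfem:s1e3}, and $\llnorm{\boldsymbol W_h^0}^2\le\llnorm{\boldsymbol w_0}^2$ via \eqref{eq:vector:dgfem:s1e4}---exactly as in the proof of Theorem~\ref{thm:vector:dgfem:s1:stability}---then places all the required quantities on the left with positive coefficients once $\alpha_0$ is large.

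It remains to bound $\tfrac{\Delta t}{2}\sum_{n}(F_v^{n+1}+F_v^{n})(\boldsymbol W_h^{n+1}+\boldsymbol W_h^{n})$. The $(\boldsymbol f,\cdot)$ contribution is handled by Cauchy--Schwarz, Young, and maximisation in time, producing $\tfrac{T}{2\epsilon_a}\ilnorm{\boldsymbol f}^2+4T\epsilon_a\max_n\llnorm{\boldsymbol W_h^n}^2$; the surface-traction contribution is, after replacing $\Delta t(\boldsymbol W_h^{n+1}+\boldsymbol W_h^n)$ by $2(\boldsymbol U_h^{n+1}-\boldsymbol U_h^n)$, treated verbatim as in Theorem~\ref{thm:vector:dgfem:s1:stability} (summation by parts and the fundamental theorem of calculus in time to trade $\boldsymbol g_N$ differences for a $\dot{\boldsymbol g}_N$ integral, then \eqref{trace:inv}, \eqref{dgpoincare} and Young). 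The only genuinely new piece is $-\tfrac{\Delta t}{2}\sumq\sum_n\varphi_q(e^{-t_{n+1}/\tau_q}+e^{-t_n/\tau_q})\DGnorm{\boldsymbol u_0}{\boldsymbol W_h^{n+1}+\boldsymbol W_h^n}$: writing $\Delta t(\boldsymbol W_h^{n+1}+\boldsymbol W_h^n)=2(\boldsymbol U_h^{n+1}-\boldsymbol U_h^n)$ and summing by parts in time turns it into boundary terms proportional to $\DGnorm{\boldsymbol u_0}{\boldsymbol U_h^m}$ and $\DGnorm{\boldsymbol u_0}{\boldsymbol U_h^0}$ plus a time integral of $\tfrac1{\tau_q}e^{-t/\tau_q}\DGnorm{\boldsymbol u_0}{\boldsymbol U_h^{n+1}+\boldsymbol U_h^n}$, and since $\sum_n\int_{t_n}^{t_{n+1}}\tfrac1{\tau_q}e^{-t/\tau_q}\,dt=1-e^{-t_m/\tau_q}\le1$, the continuity estimate of Theorem~\ref{thm:coer_conti} and Young's inequality control this by $C\enorm{\boldsymbol u_0}^2$ plus an arbitrarily small multiple of $\max_n\enorm{\boldsymbol U_h^n}^2$---crucially with \emph{no} exponential factor, which is exactly why Gr\"onwall is not required. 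Finally, collecting terms, choosing $\epsilon_a\sim\rho/T$ and $\epsilon_b\sim\varphi_0 h/(T+1)$ (and the small parameter in the new term) to absorb the $\max_n\llnorm{\boldsymbol W_h^n}^2$ and $\max_n\enorm{\boldsymbol U_h^n}^2$ contributions, taking $\alpha_0$ large enough to absorb the $O(\alpha_0^{-1/2})$ perturbations, and then maximising over $m$ (the right-hand side being $m$-independent), gives the stated bound; the term $\sum_{n,q}\Delta t\enorm{\boldsymbol{\mathcal{S}}_{hq}^{n+1}-\boldsymbol{\mathcal{S}}_{hq}^{n}}^2$ follows either from the dissipation already present on the left or, more crudely, from $\enorm{\boldsymbol{\mathcal{S}}_{hq}^{n+1}-\boldsymbol{\mathcal{S}}_{hq}^{n}}^2\le2\enorm{\boldsymbol{\mathcal{S}}_{hq}^{n+1}}^2+2\enorm{\boldsymbol{\mathcal{S}}_{hq}^{n}}^2$ together with the bound just obtained for $\max_n\enorm{\boldsymbol{\mathcal{S}}_{hq}^{n}}^2$. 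The main obstacle is the summation-by-parts-in-time bookkeeping for the $\boldsymbol g_N$ and the $e^{-t/\tau_q}\boldsymbol u_0$ terms, which must be arranged so that every time-accumulated quantity stays $O(T)$ (hence the final $O(T^2)$) rather than exponential in $T$; the rest is a direct transcription of the $\textbf{(D)}^h$ argument.
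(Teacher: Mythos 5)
Your proposal is correct and follows essentially the same route as the paper: the same test functions ($\Delta t(\boldsymbol W_h^{n+1}+\boldsymbol W_h^n)$ in the momentum equation and a multiple of $\boldsymbol{\mathcal{S}}_{hq}^{n+1}+\boldsymbol{\mathcal{S}}_{hq}^n$ in the internal-variable equation), the same telescoping of the coupling term into the internal-variable energy plus a nonnegative dissipation (so that, as you correctly observe, no $\epsilon_q$ splitting is needed, unlike for $\textbf{(D)}^h$), and the same summation-by-parts treatment of the $\boldsymbol g_N$ and $e^{-t/\tau_q}\boldsymbol u_0$ terms with the uniform bound $\int_0^{t_m}\tau_q^{-1}e^{-t/\tau_q}\,dt\leq 1$ replacing Gr\"onwall. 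The only caveat is that the dissipation you (and the paper) generate naturally controls $\Delta t\sum_n\enorm{\boldsymbol{\mathcal{S}}_{hq}^{n+1}+\boldsymbol{\mathcal{S}}_{hq}^n}^2$ rather than the differenced quantity in the theorem statement, and your fallback via the triangle inequality costs an extra factor of $T$; this mismatch appears to stem from the statement itself rather than from any flaw in your argument.
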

\begin{proof}
The proof follows the same arguments in Theorem \ref{thm:vector:dgfem:s1:stability}.
We take $\boldsymbol{v}=\Delta t(\boldsymbol{W}^{n+1}_h+\boldsymbol{W}^{n}_h)$ in \eqref{eq:vector:dgfem:s2e1} and $\boldsymbol{v}=\Delta t(\boldsymbol{\mathcal{S}}^{n+1}_h+\boldsymbol{\mathcal{S}}^{n+1}_h)$ in \eqref{eq:vector:dgfem:s2e2}, for each $q$, and
add the results. We then use \eqref{dgpoincare}, the inverse polynomial trace inequality, \eqref{eq:relation:dgnorm}, \eqref{eq:vector:dg:bdd2}, the Cauchy-Schwarz and Young's inequalities,
continuity and coercivity of the DG bilinear form, and maximise over discrete time just as before.
There is this additional term in $F_v(\cdot)$,
\[\sum\limits_{n=0}^{m-1}\sumq\varphi_q(e^{-t_{n+1}/\tau_q}+e^{-t_n/\tau_q})\DGnorm{\boldsymbol{u}_0}{\boldsymbol{U}^{n+1}_h-\boldsymbol{U}_h^n}
\]
which can be dealt with by using the fundamental theorem of calculus,
summation by parts and continuity. Specifically,
\begin{align*}
    \bigg|&\sum\limits_{n=0}^{m-1}\sumq\varphi_q(e^{-t_{n+1}/\tau_q}+e^{-t_n/\tau_q})\DGnorm{\boldsymbol{u}_0}{\boldsymbol{U}^{n+1}_h-\boldsymbol{U}_h^n}\bigg|
    \\=&
    \bigg|2\sumq\varphi_q e^{-t_m/\tau_q}\DGnorm{\boldsymbol{u}_0}{\boldsymbol{U}^{m}_h}-2\sumq\varphi_q e^{-t_0/\tau_q}\DGnorm{\boldsymbol{u}_0}{\boldsymbol{U}^{0}_h}
    \\&
    \qquad-\sum\limits_{n=0}^{m-1}\sumq\varphi_q(e^{-t_{n+1}/\tau_q}-e^{-t_n/\tau_q})\DGnorm{\boldsymbol{u}_0}{\boldsymbol{U}^{n+1}_h+\boldsymbol{U}_h^n}\bigg|\\
    \leq&2\big|\DGnorm{\boldsymbol{u}_0}{\boldsymbol U^{m}_h}\big|+2\big|a\big({\boldsymbol u_0},{\boldsymbol U^{0}_h}\big)\big|+2K\sum_{q=1}^{N_\varphi}\varphi_q\sum_{n=0}^{m-1}\int^{t_{n+1}}_{t_{n}}\frac{e^{-t'/\tau_q}}{\tau_q}\enorm{\boldsymbol u_0}\enorm{\boldsymbol U^{n}_h}dt',
\end{align*}since $|e^{-t/\tau_q}|\leq1,\  \forall t\geq0$ for each $q$ and $\sumq\varphi_q\leq1$.
Young's inequality, continuity and maximising over time now yield an $m$-independent upper bound.
The proof is now completed in a similar way (appropriate choice of Young's inequality constants,
and large enough $\alpha_0$) to the proof of Theorem \ref{thm:vector:dgfem:s1:stability}.
\end{proof}

\begin{rmk}
Note that in both of the stability estimates, \textnormal{Theorems~\ref{thm:vector:dgfem:s1:stability}
and~\ref{thm:vector:dgfem:s2:stability}}, the factor $h^{-1}$ appears in the traction term.
This unwelcome dependence is not unusual, see for example \textnormal{\cite{DG,DGV}}, and
it arises due to the technical arguments. However, it is not observed in practical computations. 
\end{rmk}

%\paragraph*{Gr\"onwall's inequality}
%We refer to \cite{gronwall1919note,holte2009discrete} for Gr\"onwall's inequality.
%	The discrete Gr\"onwall's inequality is given in \textnormal{\cite{holte2009discrete}} as follows.
%	If for $n\in\mathbb{N}$ and non-negative sequences $(a_n), (b_n)$ and $(g_n)$ we have
%	\[a_n\leq b_n+\sum^{n-1}_{i=0}g_ia_i,\]
%	then
%	\[a_n\leq b_n+\sum^{n-1}_{i=0}g_ib_i\exp\left({\sum\limits^{n-1}_{j=i}g_j}\right).\]Gr\"onwall inequality is commonly used in \textit{a priori} estimates for time dependent problems (see e.g. \cite{DG,DGV,thomee1984galerkin}). 
%	While this is a convenient result, the constants appearing in 
%	the final bound are exponentially large in the final time $T$. In other words, the discrete solutions will bounded with $\exp(T)$ terms. On the other hand, our stability analysis provided the upper bounds with $T^2$. Therefore, Theorems \ref{thm:vector:dgfem:s1:stability} and \ref{thm:vector:dgfem:s2:stability} convince our numerical schemes to perform for longtime period of viscoelastic response. 

\section{Error Analysis}\label{sec:erroranalysis}
In order to carry out an \textit{a priori} error analysis we take the usual approach
of splitting the error into `spatial' and `temporal' components by using the elliptic
projection. To this end, define 
\begin{gather*}
    \boldsymbol \theta(t):=\boldsymbol u(t)-\boldsymbol R\boldsymbol u(t),\quad \boldsymbol \vartheta_q(t):=\boldsymbol \psi_q(t)-\boldsymbol R\boldsymbol \psi_q(t),\quad \boldsymbol \nu_q(t):=\boldsymbol \zeta_q(t)-\boldsymbol{R}\boldsymbol \zeta_q(t), 
\\    \boldsymbol \chi^n:=\boldsymbol U_h^n-\boldsymbol R\boldsymbol u^n,\qquad 
    \boldsymbol \varpi^n:=\boldsymbol W_h^n- \boldsymbol R\dot {\boldsymbol u}^n,\quad\boldsymbol \varsigma^n_q:=\boldsymbol \Psi_{hq}^n-\boldsymbol R \boldsymbol \psi_q^n,
     \\       \boldsymbol \Upsilon^n_q:=\mathcal{\boldsymbol S}_{hq}^n-\boldsymbol R\boldsymbol \zeta_q^n,\quad\text{ and }\quad\boldsymbol {\mathcal{ E}}_1(t):=\frac{\ddot {\boldsymbol u}{(t+\Delta t)}+\ddot {\boldsymbol u}(t)}{2}-\frac{\dot {\boldsymbol u}{(t+\Delta t)}-\dot{\boldsymbol  u}(t)}{\Delta t},
\end{gather*}for $t\in[0,T]$, $q=1,\ldots,\Nphi$, and $n=0,\ldots,N$,
and note that \eqref{r1} implies that
\begin{align}
    \frac{\boldsymbol \chi^{n+1}-\boldsymbol \chi^{n}}{\Delta t}=&\frac{\boldsymbol \varpi^{n+1}+\boldsymbol \varpi^{n}}{2}-\boldsymbol {\mathcal{E}}_2^n-\boldsymbol {\mathcal{E}}_3^n\label{eq:vector:dgfem:dgr},
\end{align}
for $n=0,\ldots,N-1$ where
\begin{align*}
    \boldsymbol {\mathcal{E}}_2(t)&:=\frac{\dot{\boldsymbol  \theta}(t+\Delta t)+\dot {\boldsymbol \theta}(t)}{2}-\frac{\boldsymbol \theta(t+\Delta t)-\boldsymbol \theta(t)}{\Delta t},
    \\
    \boldsymbol {\mathcal{E}}_3(t)&:=\frac{\boldsymbol u(t+\Delta t)-\boldsymbol u(t)}{\Delta t}-\frac{\dot {\boldsymbol u}(t+\Delta t)+\dot {\boldsymbol u}(t)}{2}.
\end{align*}
Also, for a three-times time-differentiable function, $\boldsymbol v(t)$, with
$\boldsymbol v^{(3)}$ denoting the third time derivative, we have
	\begin{align*}
	\frac{\dot {\boldsymbol v}(t_{n+1})+\dot{\boldsymbol  v}(t_{n})}{2}-\frac{\boldsymbol v(t_{n+1})-\boldsymbol v(t_{n})}{\Delta t}=\frac{1}{2\Delta t}\int^{t_{n+1}}_{t_n}\boldsymbol v^{(3)}(t)(t_{n+1}-t)(t-t_n)dt.
\end{align*}
Hence, if  $\boldsymbol v^{(3)}\in{L_2(t_n,t_{n+1};\boldsymbol{X})}$, the
Cauchy-Schwarz inequality gives
	\begin{equation}
	\left\lVert{\frac{\dot{\boldsymbol  v}(t_{n+1})+\dot {\boldsymbol v}(t_{n})}{2}-\frac{\boldsymbol v(t_{n+1})-\boldsymbol v(t_{n})}{\Delta t}}\right\rVert_{X}^2\leq\frac{\Delta t^3}{4}\lVert \boldsymbol v^{(3)}\rVert_{L_2(t_n,t_{n+1};X)}^2.
	\label{eq:error:CN}
	\end{equation}

As is well known, the usual path to an error bound is to use the triangle inequality
to split the error using the spatial and temporal components introduced above. The spatial
errors are bounded using standard results for elliptic problems, and so it is the temporal error
components that demand the bulk of the effort. For our problems, this effort is contained in the 
next two lemmas: the first for $\textbf{(D)}^h$ and the second for $\textbf{(V)}^h$.

\begin{lem}
\label{lemma:vector:dgfem:error:s1}
Suppose
$\boldsymbol u\in H^2(0,T;\boldsymbol{C}^2({\Omega}))\cap W^1_\infty(0,T;\Dgv)\cap H^4(0,T;\Dgv)$ and $\beta_0(d-1)\geq 1$ for $s>3/2$. If, for $n=0,\ldots,N$, the solution to
\textnormal{$\textbf{(D)}^h$} is $\boldsymbol{U}^n_h$, $\boldsymbol{W}^n_h$,
$\boldsymbol{\Psi}^n_{h1},\ldots,\boldsymbol{\Psi}^n_{h\Nphi}$ then,
for large enough $\alpha_0$, there exists a positive constant $C$ such that\begin{gather*}
    \max_{1\leq n\leq N}\llnorm{\boldsymbol \varpi^{n}}+\max_{1\leq n\leq N}\enorm{\boldsymbol \chi^{n}}\leq CT\lVert\boldsymbol{u}\rVert_{H^4(0,T;\dgv)}(h^{r}+\Delta t^2),
\end{gather*}
where $r=\min{(k+1,s)}$ and $C$ are independent of $h$, $\Delta t$, $T$ and the numerical solution.
\end{lem}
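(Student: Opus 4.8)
\emph{Plan.} The plan is to run, at the level of the projected errors $\boldsymbol\chi^n,\boldsymbol\varpi^n,\{\boldsymbol\varsigma^n_q\}$, the same Gr\"onwall‑free energy argument that proves Theorem~\ref{thm:vector:dgfem:s1:stability}, and then to absorb the extra consistency terms generated by the exact solution. First I would derive the error equations by subtracting the Crank--Nicolson average (at $t_n$ and $t_{n+1}$) of the continuous weak form \eqref{eq:vector:dgfem:s1:primal1}--\eqref{eq:vector:dgfem:s1:primal2} from the discrete equations \eqref{eq:vector:dgfem:s1e1}--\eqref{eq:vector:dgfem:s1e2}, inserting the splittings $\boldsymbol U^n_h=\boldsymbol\chi^n+\boldsymbol R\boldsymbol u^n$, $\boldsymbol W^n_h=\boldsymbol\varpi^n+\boldsymbol R\dot{\boldsymbol u}^n$, $\boldsymbol\Psi^n_{hq}=\boldsymbol\varsigma^n_q+\boldsymbol R\boldsymbol\psi^n_q$, and using the defining identities $\DGnorm{\boldsymbol\theta(t)}{\boldsymbol v}=\DGnorm{\boldsymbol\vartheta_q(t)}{\boldsymbol v}=0$ for $\boldsymbol v\in\Dgvh$ together with the inter‑element continuity of $\boldsymbol u,\dot{\boldsymbol u}$ (from the $\boldsymbol C^2(\Omega)$ regularity, so that $J_0(\dot{\boldsymbol u},\cdot)=0$ and the edge‑average terms of the exact solution vanish). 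This leaves a momentum error equation whose right‑hand side contains only $\rho\big(\boldsymbol{\mathcal E}_1^n+\Delta t^{-1}(\dot{\boldsymbol\theta}^{n+1}-\dot{\boldsymbol\theta}^n),\boldsymbol v\big)+J_0\big((\dot{\boldsymbol\theta}^{n+1}+\dot{\boldsymbol\theta}^n)/2,\boldsymbol v\big)$, and an internal‑variable error equation with right‑hand side $\varphi_q\DGnorm{(\boldsymbol\chi^{n+1}+\boldsymbol\chi^n)/2}{\boldsymbol v}+\tau_q\DGnorm{\boldsymbol{\mathcal E}_{\psi_q}^n}{\boldsymbol v}$, where $\boldsymbol{\mathcal E}_{\psi_q}$ is the Crank--Nicolson truncation of $\boldsymbol\psi_q$ (the $\boldsymbol\vartheta_q$‑difference quotient drops out because $\DGnorm{\boldsymbol\vartheta_q}{\boldsymbol v}=0$).

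Next I would test the momentum error equation with $\boldsymbol v=\Delta t(\boldsymbol\varpi^{n+1}+\boldsymbol\varpi^n)$ and the $q$‑th internal‑variable error equation with $\boldsymbol v=2(\boldsymbol\varsigma^{n+1}_q-\boldsymbol\varsigma^n_q)$, divide the latter by $\varphi_q$, and add over $q$ and over $n=0,\dots,m-1$, exactly as in the proof of Theorem~\ref{thm:vector:dgfem:s1:stability}. The algebraic identities used there (telescoping of $\DGnorm{\boldsymbol\chi^{n+1}+\boldsymbol\chi^n}{\boldsymbol\chi^{n+1}-\boldsymbol\chi^n}$, the $\DGnorm{\boldsymbol\varsigma^{n+1}_q+\boldsymbol\varsigma^n_q}{\boldsymbol\chi^{n+1}-\boldsymbol\chi^n}$ splitting, and \eqref{eq:relation:dgnorm} with Corollary~\ref{cor:estimate_interior} and Theorem~\ref{thm:coer_conti} to pass from $a(\cdot,\cdot)$ to $\enorm{\cdot}^2$) carry over verbatim, with one modification: in place of \eqref{r1} one uses \eqref{eq:vector:dgfem:dgr}, $\Delta t(\boldsymbol\varpi^{n+1}+\boldsymbol\varpi^n)=2(\boldsymbol\chi^{n+1}-\boldsymbol\chi^n)+2\Delta t(\boldsymbol{\mathcal E}_2^n+\boldsymbol{\mathcal E}_3^n)$. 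This produces extra right‑hand cross‑terms in $\boldsymbol{\mathcal E}_2^n+\boldsymbol{\mathcal E}_3^n$; since these lie in $\Dgv$ but not in $\Dgvh$, I would replace $\boldsymbol{\mathcal E}_i$ by $\boldsymbol R\boldsymbol{\mathcal E}_i$ using symmetry of $a$ and the projection identity, noting that $\boldsymbol{\mathcal E}_2^n$ is a linear combination of grid values of $\boldsymbol\theta$ and $\dot{\boldsymbol\theta}$, whence $\DGnorm{\boldsymbol{\mathcal E}_2^n}{\cdot}=0$ on $\Dgvh$ and in fact $\boldsymbol R\boldsymbol{\mathcal E}_2^n=\boldsymbol 0$, so only $\boldsymbol R\boldsymbol{\mathcal E}_3^n$ survives and continuity of $a$ on $\Dgvh\times\Dgvh$ applies. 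Using $\boldsymbol\chi^0=\boldsymbol 0=\boldsymbol\varsigma^0_q$ (from \eqref{eq:vector:dgfem:s1e3}, \eqref{eq:vector:dgfem:s1e5}) and $\llnorm{\boldsymbol\varpi^0}\le Ch^{r}\vertiii{\boldsymbol w_0}_{H^s}$ (the $L_2$‑projection in \eqref{eq:vector:dgfem:s1e4} is a best $L_2(\Omega)$‑approximation, then compare with $\boldsymbol R\boldsymbol w_0$ and use \eqref{vector:appl2}), one arrives at an inequality of the same shape as \eqref{eq:vector:dgfem:sta:s1:eq3}, with the data terms replaced by the consistency quantities.

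The heart of the proof is the bound on those consistency quantities, and the point is that every one of them that re‑introduces a $\nmax\llnorm{\boldsymbol\varpi^n}$ or $\nmax\enorm{\boldsymbol\chi^n}$ (or $\nmax\enorm{\boldsymbol\varsigma^n_q}$) carries the saving factor $\sqrt{\Delta t\,N}=\sqrt T$. The Crank--Nicolson truncations $\boldsymbol{\mathcal E}_1^n,\boldsymbol{\mathcal E}_3^n,\boldsymbol{\mathcal E}_{\psi_q}^n$ (and hence $\boldsymbol R\boldsymbol{\mathcal E}_3^n$) obey \eqref{eq:error:CN}, so each has $X$‑norm $\le C\Delta t^{3/2}\|(\cdot)^{(3)}\|_{L_2(t_n,t_{n+1};X)}$; summing $\Delta t\sum_n\|\boldsymbol{\mathcal E}^n\|_X\,\|(\text{error})^{n}\|$ and applying the discrete Cauchy--Schwarz inequality gives $\le C\sqrt T\,\Delta t^{2}\,\|\boldsymbol u\|_{H^4(0,T;X)}\,\nmax\|(\text{error})^n\|$. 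The remaining pieces $\Delta t^{-1}(\dot{\boldsymbol\theta}^{n+1}-\dot{\boldsymbol\theta}^n)$ and $(\dot{\boldsymbol\theta}^{n+1}+\dot{\boldsymbol\theta}^n)/2$ are controlled by $\ddot{\boldsymbol\theta}$, $\dot{\boldsymbol\theta}$ in $L_2(t_n,t_{n+1};\cdot)$ and hence by $Ch^{\,r}\|\boldsymbol u\|_{H^2(0,T;H^s)}$ via \eqref{vector:appl2} (the jump‑penalty parts $J_0(\dot{\boldsymbol\theta}^{n},\cdot)$ being paired by the Cauchy--Schwarz inequality for $J_0$ against the dissipation term $\tfrac{\Delta t}{2}\sum_nJ_0(\boldsymbol\varpi^{n+1}+\boldsymbol\varpi^n,\cdot)$ kept on the left, and estimated via \eqref{vector:appDG}); while $\boldsymbol\psi_q^{(3)}$ is bounded by time derivatives of $\boldsymbol u$ using the auxiliary ODE $\tau_q\dot{\boldsymbol\psi}_q+\boldsymbol\psi_q=\varphi_q\boldsymbol u$ recursively, so all $\boldsymbol\psi_q$‑norms reduce to $\|\boldsymbol u\|_{H^4(0,T;\dgv)}$. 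Choosing $\alpha_0$ large and the Young parameters small (of order $1/T$) to absorb every $\max$‑term from the right, and observing that the resulting right‑hand side is independent of $m$, I would finally maximise over $m$ to get $\nmax\llnorm{\boldsymbol\varpi^n}^2+\nmax\enorm{\boldsymbol\chi^n}^2\le CT^2\|\boldsymbol u\|_{H^4(0,T;\dgv)}^2(h^{2r}+\Delta t^4)$ and take square roots. I expect the main obstacle to be exactly this absorption step — verifying that none of the consistency couplings forces an exponential‑in‑$T$ accumulation, i.e.\ that the Gr\"onwall‑free mechanism of Theorem~\ref{thm:vector:dgfem:s1:stability} is stable under the perturbation — together with the bookkeeping for the edge/jump‑penalty consistency terms, where the identity $\boldsymbol R\boldsymbol{\mathcal E}_2^n=\boldsymbol 0$ and the sharp elliptic estimates \eqref{vector:appDG}, \eqref{vector:appl2} are what make the argument close.
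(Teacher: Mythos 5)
Your proposal follows essentially the same route as the paper's own proof: the same elliptic-projection splitting and error equations, an equivalent choice of test functions (your $\Delta t(\boldsymbol\varpi^{n+1}+\boldsymbol\varpi^n)$ differs from the paper's $2(\boldsymbol\chi^{n+1}-\boldsymbol\chi^n)$ only by the consistency terms in \eqref{eq:vector:dgfem:dgr}, so the cross-terms merely land in $a(\cdot,\cdot)$ rather than $L_2$ form), the same telescoping/coercivity/Corollary~\ref{cor:estimate_interior} machinery, the same Crank--Nicolson truncation bounds via \eqref{eq:error:CN} with the internal-variable ODE used to reduce $\boldsymbol\psi_q$-norms to $\boldsymbol u$-norms, and the same Gr\"onwall-free closure via discrete Cauchy--Schwarz (the $\sqrt{T}$ saving) followed by Young parameters of order $1/T$ and maximisation over $m$. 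The one place you go beyond the paper's displayed argument is the jump-penalty consistency term $\jump{(\dot{\boldsymbol\theta}^{n+1}+\dot{\boldsymbol\theta}^n)/2}{\cdot}$, which the paper's error equation silently drops; be aware that bounding its amplitude by $\enorm{\dot{\boldsymbol\theta}}$ through \eqref{vector:appDG}, as you suggest, only yields $O(h^{r-1})$, so a sharper trace-type estimate of the jump of the projection error is needed at that point for the stated $h^{r}$ rate to survive.
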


\begin{proof}
The proof will follow similar arguments in the stability analysis but we additionally
use Galerkin orthogonality, the elliptic error estimates \eqref{vector:appDG} and \eqref{vector:appl2}, and the time
discretisation error \eqref{eq:error:CN}. Averaging over $t_{n+1}$ and $t_n$ and subtracting
\eqref{eq:vector:dgfem:s1:primal1} from \eqref{eq:vector:dgfem:s1e1} give
\begin{align}
    \frac{\rho}{\Delta t}&\Lnorm{\boldsymbol \varpi^{n+1}-\boldsymbol \varpi^n}{\boldsymbol v}+\frac{1}{2}\DGnorm{\boldsymbol \chi^{n+1}+\boldsymbol \chi^n}{\boldsymbol v}-\frac{1}{2}\sum_{q=1}^{N_\varphi}\DGnorm{\boldsymbol \varsigma_q^{n+1}+\boldsymbol \varsigma_q^n}{\boldsymbol v}+\frac{1}{2}\jump{\boldsymbol \varpi^{n+1}+\boldsymbol \varpi^n}{\boldsymbol v}\nonumber\\
    =&
    \frac{\rho}{\Delta t}\Lnorm{\dot{\boldsymbol \theta}^{n+1}-\dot{\boldsymbol \theta}^n}{\boldsymbol v}+\rho\Lnorm{\boldsymbol {\mathcal{ E}}^n_1}{\boldsymbol v},\label{eq:vector:dgfem:error:s1:e1}
\end{align}$\forall\boldsymbol v\in \Dgvh$ for $0\leq n\leq N-1$. In this manner, a subtraction of \eqref{eq:vector:dgfem:s1e2} from \eqref{eq:vector:dgfem:s1:primal2} gives
\begin{align}
    \frac{\tau_q}{\Delta t}&\DGnorm{\boldsymbol\varsigma_q^{n+1}-\boldsymbol \varsigma_q^n}{\boldsymbol v}+\frac{1}{2}\DGnorm{\boldsymbol \varsigma_q^{n+1}+\boldsymbol \varsigma_q^{n}}{\boldsymbol v}-\frac{\varphi_q}{2}\DGnorm{\boldsymbol \chi^{n+1}+\boldsymbol \chi^n}{\boldsymbol v}
    =\tau_q\DGnorm{\boldsymbol E_q^n}{\boldsymbol v},\label{eq:vector:dgfem:error:s1:e2}
\end{align}
by Galerkin orthogonality, for any $\boldsymbol v\in\Dgvh$, where
\[\boldsymbol E_q(t)=\frac{\dot{\boldsymbol \psi}_q(t+\Delta t)+\dot{\boldsymbol \psi}_q(t)}{2}-\frac{\boldsymbol \psi_q(t+\Delta t)-\boldsymbol \psi_q(t)}{\Delta t}\textrm{ for each $q$}.\]
Inserting $\boldsymbol{v}=2(\boldsymbol{\chi}^{n+1}-\boldsymbol{\chi}^n)$ into \eqref{eq:vector:dgfem:error:s1:e1},
with \eqref{eq:vector:dgfem:dgr}, and $\boldsymbol{v}=\boldsymbol{\varsigma}^{n+1}_q-\boldsymbol{\varsigma}^n_q$ into \eqref{eq:vector:dgfem:error:s1:e2} for each $q$, summing over $n=0,\ldots,m-1$ for $m\leq N$ gives,
\begin{align*}
    {\rho}&\llnorm{\boldsymbol \varpi^{m}}^2
    +\DGnorm{\boldsymbol \chi^{m}}{\boldsymbol \chi^{m}}
    \nonumber+\sum_{q=1}^{N_\varphi}\frac{1}{\varphi_q}\DGnorm{\boldsymbol \varsigma_q^{m}}{\boldsymbol \varsigma_q^{m}}
    \\&\nonumber
    +{\Delta t}\sum_{n=0}^{m-1}\sum_{q=1}^{N_\varphi}\frac{2\tau_q}{\varphi_q}\DGnorm{\frac{\boldsymbol \varsigma_q^{n+1}-\boldsymbol \varsigma_q^n}{\Delta t}}{\frac{\boldsymbol \varsigma_q^{n+1}-\boldsymbol \varsigma_q^n}{\Delta t}}
%    \\&
    +\frac{\Delta t}{2}\sum_{n=0}^{m-1}\jump{\boldsymbol \varpi^{n+1}+\boldsymbol \varpi^n}{\boldsymbol \varpi^{n+1}+\boldsymbol \varpi^n}\nonumber
    \\\nonumber
=&{\rho}\llnorm{\boldsymbol \varpi^0}^2+\DGnorm{\boldsymbol \chi^0}{\boldsymbol \chi^{0}}+{\rho}\sum_{n=0}^{m-1}\Lnorm{\dot{\boldsymbol \theta}^{n+1}-\dot{\boldsymbol \theta}^n}{\boldsymbol \varpi^{n+1}+\boldsymbol \varpi^n}\\\nonumber&-{2\rho}\sum_{n=0}^{m-1}\Lnorm{\dot{\boldsymbol \theta}^{n+1}-\dot{\boldsymbol \theta}^n}{\boldsymbol {\mathcal{E}}_2^n}-{2\rho}\sum_{n=0}^{m-1}\Lnorm{\dot{\boldsymbol \theta}^{n+1}-\dot{\boldsymbol \theta}^n}{\boldsymbol {\mathcal{E}}_3^n}\\\nonumber&+{2\rho}\Delta t\sum_{n=0}^{m-1}\Lnorm{\boldsymbol {\mathcal{E}}_1^n}{\boldsymbol \varpi^{n+1}+\boldsymbol \varpi^n}-{\rho}\Delta t\sum_{n=0}^{m-1}\Lnorm{\boldsymbol {\mathcal{E}}_1^n}{\boldsymbol {\mathcal{E}}_2^n}-{2\rho}\Delta t\sum_{n=0}^{m-1}\Lnorm{\boldsymbol {\mathcal{E}}_1^n}{\boldsymbol {\mathcal{E}}_3^n}\\&+{2\rho}\sum_{n=0}^{m-1}\Lnorm{\boldsymbol \varpi^{n+1}-\boldsymbol \varpi^n}{\boldsymbol {\mathcal{E}}_2^n}+{2\rho}\sum_{n=0}^{m-1}\Lnorm{\boldsymbol \varpi^{n+1}-\boldsymbol \varpi^n}{\boldsymbol {\mathcal{E}}_3^n}
\nonumber\\
&
+2\sum_{q=1}^{N_\varphi}\DGnorm{\boldsymbol \chi^m}{\boldsymbol \varsigma_q^m}+\sum_{q=1}^{N_\varphi}\frac{2\tau_q}{\varphi_q}\DGnorm{\boldsymbol E_q^{m-1}}{\boldsymbol \varsigma_q^{m}}
%\\&
-\sum_{n=0}^{m-1}\sum_{q=1}^{N_\varphi}\frac{2\tau_q}{\varphi_q}    \DGnorm{\boldsymbol E_q^{n+1}-\boldsymbol E_q^n}{\boldsymbol \varsigma_q^{n+1}},
\end{align*}
when we apply summation by parts to the summation terms from \eqref{eq:vector:dgfem:error:s1:e2}
with the fact $\boldsymbol{\varsigma}^0_q=\boldsymbol{0},\ \forall q$.
Hence the definition of DG bilinear form and its coercivity imply that
\begin{align}
    {\rho}&\llnorm{\boldsymbol \varpi^{m}}^2
    +\enorm{\boldsymbol \chi^{m}}^2
    \nonumber+\sum_{q=1}^{N_\varphi}\frac{1}{\varphi_q}\enorm{\boldsymbol \varsigma_q^{m}}^2+{\Delta t}\sum_{n=0}^{m-1}\sum_{q=1}^{N_\varphi}\frac{2\kappa\tau_q}{\varphi_q}\left\lVert{\frac{\boldsymbol \varsigma_q^{n+1}-\boldsymbol \varsigma_q^n}{\Delta t}}\right\rVert^2\\&+\frac{\Delta t}{2}\sum_{n=0}^{m-1}\jump{\boldsymbol \varpi^{n+1}+\boldsymbol \varpi^n}{\boldsymbol \varpi^{n+1}+\boldsymbol \varpi^n}\nonumber
\\
\nonumber
\leq&
\bigg|{\rho}\llnorm{\boldsymbol \varpi^0}^2+\enorm{\boldsymbol \chi^0}^2+{\rho}\sum_{n=0}^{m-1}\Lnorm{\dot{\boldsymbol \theta}^{n+1}-\dot{\boldsymbol \theta}^n}{\boldsymbol \varpi^{n+1}+\boldsymbol \varpi^n}
\\
\nonumber&
-{2\rho}\sum_{n=0}^{m-1}\Lnorm{\dot{\boldsymbol \theta}^{n+1}-\dot{\boldsymbol \theta}^n}{\boldsymbol {\mathcal{E}}_2^n}-{2\rho}\sum_{n=0}^{m-1}\Lnorm{\dot{\boldsymbol \theta}^{n+1}-\dot{\boldsymbol \theta}^n}{\boldsymbol {\mathcal{E}}_3^n}
\\
\nonumber&
+{\rho}\Delta t\sum_{n=0}^{m-1}\Lnorm{\boldsymbol {\mathcal{E}}_1^n}{\boldsymbol \varpi^{n+1}+\boldsymbol \varpi^n}-{2\rho}\Delta t\sum_{n=0}^{m-1}\Lnorm{\boldsymbol {\mathcal{E}}_1^n}{\boldsymbol {\mathcal{E}}_2^n}
-{2\rho}\Delta t\sum_{n=0}^{m-1}\Lnorm{\boldsymbol {\mathcal{E}}_1^n}{\boldsymbol {\mathcal{E}}_3^n}
\\&
+{2\rho}\sum_{n=0}^{m-1}\Lnorm{\boldsymbol \varpi^{n+1}-\boldsymbol \varpi^n}{\boldsymbol {\mathcal{E}}_2^n}+{2\rho}\sum_{n=0}^{m-1}\Lnorm{\boldsymbol \varpi^{n+1}-\boldsymbol \varpi^n}{\boldsymbol {\mathcal{E}}_3^n}
\nonumber\\
&\nonumber
+2\sum_{q=1}^{N_\varphi}\DGnorm{\boldsymbol \chi^m}{\boldsymbol \varsigma_q^m}+\sum_{q=1}^{N_\varphi}\frac{2\tau_q}{\varphi_q}\DGnorm{\boldsymbol E_q^{m-1}}{\boldsymbol \varsigma_q^{m}}
%\\&
-\sum_{n=0}^{m-1}\sum_{q=1}^{N_\varphi}\frac{2\tau_q}{\varphi_q}    
\DGnorm{\boldsymbol E_q^{n+1}-\boldsymbol E_q^n}{\boldsymbol \varsigma_q^{n+1}}
\nonumber\\&
+2\sume\{\ushort{\boldsymbol D}\ushort{\boldsymbol \varepsilon}(\boldsymbol  {\chi}^m)\}:\njump{\boldsymbol{\chi}^m}de
%\nonumber\\&
+\sumq\frac{2}{\varphi_q}\sume \{\ushort{\boldsymbol D}\ushort{\boldsymbol \varepsilon}(\boldsymbol  {\varsigma}^m_q)\}:\njump{\boldsymbol{\varsigma}^m_q}de\bigg|.
\label{eq:vector:dgfem:error:s1:e3}
\end{align}
Using the elliptic projection, \eqref{eq:vector:dgfem:s1e3}, \eqref{eq:vector:dgfem:s1e4}
and initial conditions, we have $\llnorm{\boldsymbol \varpi^0}\leq\llnorm{\dot{\boldsymbol \theta}^0}$
and $\enorm{\boldsymbol \chi^0}=0$.
Further, we note that the fundamental theorem of calculus allows us to deal with the time differences 
with expressions like
$\dot{\boldsymbol \theta}^{n+1}-\dot{\boldsymbol \theta}^n=\int^{t_{n+1}}_{t_n}\ddot{\boldsymbol{\theta}}(t')dt'$,
and then using \eqref{eq:error:CN} we can bound
$\boldsymbol{\mathcal{E}}_1^n,\boldsymbol{\mathcal{E}}_2^n,\boldsymbol{\mathcal{E}}_3^n$
and $\boldsymbol{E}_q^n$, for all $n$ and for each $q$, in an optimal way. For example, 
\begin{align*}
    \bigg|\Delta t\sum_{n=0}^{m-1}\Lnorm{\boldsymbol {\mathcal{E}}_1^n}{\boldsymbol {\mathcal{E}}_2^n}\bigg|\leq&\frac{\Delta t}{2}\sum_{n=0}^{N-1}\llnorm{\boldsymbol {\mathcal{E}}_1^n}^2+\frac{\Delta t}{2}\sum_{n=0}^{N-1}\llnorm{\boldsymbol {\mathcal{E}}_2^n}^2\\\leq&\frac{\Delta t^4}{8}\Llnorm{\boldsymbol {u}^{(4)}}^2+\frac{\Delta t^4}{8}\Llnorm{\boldsymbol {\theta}^{(3)}}^2,
\end{align*}
by the Cauchy-Schwarz inequality, Young's inequality and \eqref{eq:error:CN}.
On the other hand, the elliptic error estimates such as \eqref{vector:appl2} provide spatial error estimates for
$\boldsymbol{\theta}(t)$ and its time derivatives and, noting that the internal variables
are analogues of displacement, $\boldsymbol{u}$, we can employ elliptic error estimates
for the internal variables as well. Therefore, in the same way as for the stability estimate, using the Cauchy-Schwarz and
Young's inequalities, \eqref{eq:vector:dg:bdd2}, summation by parts, maximising in time, and choosing large enough
penalty parameters, we eventually arrive at,
\begin{align*}
        \max_{1\leq n\leq N}&\llnorm{\boldsymbol\varpi^{n}}
    +\max_{1\leq n\leq N}\enorm{\boldsymbol\chi^{n}}+
    \sum_{q=1}^{N_\varphi}\max_{1\leq n\leq N}\enorm{\boldsymbol\varsigma_q^{n}}+\left({{\Delta t}\sum_{n=0}^{N-1}\sum_{q=1}^{N_\varphi}\frac{\tau_q}{\varphi_q}\left\lVert{\frac{\boldsymbol\varsigma_q^{n+1}-\boldsymbol\varsigma_q^n}{\Delta t}}\right\rVert^2}\right)^{1/2}\\&+\left({{\Delta t}\sum_{n=0}^{N-1}\jump{\boldsymbol\varpi^{n+1}+\boldsymbol\varpi^{n}}{\boldsymbol\varpi^{n+1}+\boldsymbol\varpi^{n}}}\right)^{1/2}
%    \\
\leq
% &
CT\lVert \boldsymbol u\rVert_{H^4(0,T;\Dgv)}(h^{r}+\Delta t^2).
\end{align*}
Here $C$ is independent of $h$, $\Delta t$ and the solutions, but depends on $\rho$, $\Omega$, $\partial\Omega$ and the 
material properties. For full technical details for the proof, we refer to \cite{jang2020spatially}.
\end{proof}

Next, the analogous estimate for {$\textbf{(V)}^h$}.

\begin{lem}
\label{lemma:vector:dgfem:error:s2}
Suppose
$\boldsymbol u\in H^2(0,T;\boldsymbol C^2({\Omega}))\cap W^1_\infty(0,T;\Dgv)\cap H^4(0,T;\Dgv)$
and $\beta_0(d-1)\geq 1$ for $s>3/2$. If, for $n=0,1,\ldots,N$,
the solution to \textnormal{$\textbf{(V)}^h$} is
$\boldsymbol{U}^n_h$, $\boldsymbol{W}^n_h$, $\boldsymbol{\mathcal{S}}^n_{h1},\ldots,\boldsymbol{\mathcal{S}}^n_{h\Nphi}$ then, 
for large enough $\alpha_0$, there exists a positive constant $C$ such that
\begin{gather*}
    \max_{1\leq n\leq N}\llnorm{\boldsymbol \varpi^{n}}+\max_{1\leq n\leq N}\enorm{\boldsymbol \chi^{n}}\leq CT\lVert\boldsymbol{u}\rVert_{H^4(0,T;\dgv)}(h^{r}+\Delta t^2),
\end{gather*}
where $r=\min{(k+1,s)}$ and $C$ are independent of $h$, $\Delta t$, $T$ and the numerical solution.
\end{lem}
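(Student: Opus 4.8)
The plan is to mirror the proof of Lemma~\ref{lemma:vector:dgfem:error:s1}, adapting the energy argument to the velocity form in exactly the way that the stability proof of Theorem~\ref{thm:vector:dgfem:s2:stability} adapts that of Theorem~\ref{thm:vector:dgfem:s1:stability}. First I would derive the error equations: restrict the weak form \textbf{(V)} to $\boldsymbol v\in\Dgvh$, average \eqref{eq:vector:dgfem:s2:primal1} over the consecutive nodes $t_n,t_{n+1}$ and subtract the fully discrete equation \eqref{eq:vector:dgfem:s2e1}, and do likewise with \eqref{eq:vector:dgfem:s2:primal2} and \eqref{eq:vector:dgfem:s2e2}. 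Since $F_v$ enters both problems consistently (it is evaluated at the same nodes), it cancels on subtraction; in particular the extra term $e^{-t/\tau_q}\DGnorm{\boldsymbol u_0}{\cdot}$ in $F_v$ disappears, so the velocity form is actually \emph{simpler} here than in the stability estimate and no $h^{-1}$ factor is generated. Using the Galerkin orthogonality of $\boldsymbol R$ (which annihilates $\DGnorm{\boldsymbol\theta}{\cdot}$, $\DGnorm{\dot{\boldsymbol\theta}}{\cdot}$ and $\DGnorm{\boldsymbol\nu_q}{\cdot}$) together with \eqref{eq:vector:dgfem:dgr} to rewrite the discrete acceleration, one obtains error equations for $(\boldsymbol\varpi^n,\boldsymbol\chi^n,\boldsymbol\Upsilon_q^n)$ whose right-hand sides contain only the Crank--Nicolson truncation errors $\boldsymbol{\mathcal E}_1,\boldsymbol{\mathcal E}_2,\boldsymbol{\mathcal E}_3$, an analogous truncation error $\widetilde{\boldsymbol E}_q(t):=\tfrac12\big(\dot{\boldsymbol\zeta}_q(t+\Delta t)+\dot{\boldsymbol\zeta}_q(t)\big)-\tfrac1{\Delta t}\big(\boldsymbol\zeta_q(t+\Delta t)-\boldsymbol\zeta_q(t)\big)$ for the velocity-form internal variable, and the elliptic-projection time differences such as $\dot{\boldsymbol\theta}^{n+1}-\dot{\boldsymbol\theta}^n$.

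Next I would run the energy argument. Test the first error equation with $\boldsymbol v=\Delta t(\boldsymbol\varpi^{n+1}+\boldsymbol\varpi^n)$, rewriting this quantity via \eqref{eq:vector:dgfem:dgr} as $2(\boldsymbol\chi^{n+1}-\boldsymbol\chi^n)$ plus truncation terms so that the term $\DGnorm{\tfrac12(\boldsymbol\chi^{n+1}+\boldsymbol\chi^n)}{\cdot}$ telescopes; and test the $q$-th internal-variable error equation with the weighted function $\boldsymbol v=\tfrac{\Delta t}{\tau_q\varphi_q}(\boldsymbol\Upsilon_q^{n+1}+\boldsymbol\Upsilon_q^n)$. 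This weighting is the key feature carried over from Theorem~\ref{thm:vector:dgfem:s2:stability}: it makes $\DGnorm{\boldsymbol\Upsilon_q}{\boldsymbol\Upsilon_q}$ telescope and makes the cross-coupling contributions $\DGnorm{\boldsymbol\Upsilon_q^{n+1}+\boldsymbol\Upsilon_q^n}{\boldsymbol\varpi^{n+1}+\boldsymbol\varpi^n}$ cancel between the two equations by the symmetry of $a(\cdot,\cdot)$. Summing over $n=0,\dots,m-1$, using $\boldsymbol\chi^0=\boldsymbol0$, $\boldsymbol\Upsilon_q^0=\boldsymbol0$ and $\llnorm{\boldsymbol\varpi^0}\leq C\llnorm{\dot{\boldsymbol\theta}^0}$ (from \eqref{eq:vector:dgfem:s1e3}--\eqref{eq:vector:dgfem:s1e4}), then applying \eqref{eq:relation:dgnorm}, Corollary~\ref{cor:estimate_interior}, and the coercivity/continuity of Theorem~\ref{thm:coer_conti} (with $\alpha_0$ large) to absorb the non-coercive remainder terms, yields an energy inequality controlling $\rho\llnorm{\boldsymbol\varpi^m}^2+\enorm{\boldsymbol\chi^m}^2+\sum_q\tfrac1{\varphi_q}\enorm{\boldsymbol\Upsilon_q^m}^2$ together with the positive dissipation terms, all bounded by the consistency contributions.

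To close, I would estimate every right-hand side term by Cauchy--Schwarz and Young's inequality, using the fundamental theorem of calculus to write $\dot{\boldsymbol\theta}^{n+1}-\dot{\boldsymbol\theta}^n=\int_{t_n}^{t_{n+1}}\ddot{\boldsymbol\theta}$ (and, for the terms containing $\boldsymbol\varpi^{n+1}-\boldsymbol\varpi^n$ or $\boldsymbol\Upsilon_q^{n+1}-\boldsymbol\Upsilon_q^n$, first applying summation by parts so that only $\max$-in-time norms of the errors appear, which are then absorbed into the left-hand side), the bound \eqref{eq:error:CN} to control $\boldsymbol{\mathcal E}_1,\boldsymbol{\mathcal E}_2,\boldsymbol{\mathcal E}_3,\widetilde{\boldsymbol E}_q$ by $\Delta t^{3/2}$ times the relevant third time derivative, and the elliptic estimates \eqref{vector:appDG}--\eqref{vector:appl2} for $\boldsymbol\theta$, its time derivatives, and for $\boldsymbol\nu_q$ and $\dot{\boldsymbol\nu}_q$ (whose $H^s$-norms are controlled by those of $\boldsymbol u$ through the convolution $\boldsymbol\zeta_q(t)=\int_0^t\varphi_qe^{-(t-s)/\tau_q}\dot{\boldsymbol u}(s)\,ds$, with $\boldsymbol\zeta_q^{(3)}$ controlled by $\boldsymbol u^{(4)}$ via the ODE \eqref{primal:ode:velo}). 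Because the right-hand side is independent of $m$ and the discrete time is maximised rather than iterated, the accumulation over the $N=T/\Delta t$ steps is only polynomial in $T$ --- no Gr\"onwall inequality is used --- and collecting powers of $h$ and $\Delta t$ gives the stated bound with $r=\min(k+1,s)$, for $\alpha_0$ large enough.

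I expect the main obstacle to be the bookkeeping in the second step: getting the weighted internal-variable test function precisely right so that the $\boldsymbol\varpi$--$\boldsymbol\Upsilon_q$ coupling cancels cleanly (the velocity form couples the internal variable to the \emph{velocity} error rather than the displacement error, making the cancellation less transparent than in Lemma~\ref{lemma:vector:dgfem:error:s1}), and then handling every term containing a difference $\boldsymbol\varpi^{n+1}-\boldsymbol\varpi^n$ or $\boldsymbol\Upsilon_q^{n+1}-\boldsymbol\Upsilon_q^n$ against a truncation error by summation by parts, so that no uncontrolled per-step quantity survives and everything reduces to absorbable $\max$-in-time norms.
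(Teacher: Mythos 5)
Your proposal is correct and follows essentially the same route as the paper: average-and-subtract to get the error equations (with $F_v$ cancelling exactly, so no $h^{-1}$ appears), Galerkin orthogonality, the test functions $\Delta t(\boldsymbol\varpi^{n+1}+\boldsymbol\varpi^n)\equiv 2(\boldsymbol\chi^{n+1}-\boldsymbol\chi^n)$ modulo the \eqref{eq:vector:dgfem:dgr} corrections and a $1/(\tau_q\varphi_q)$-weighted multiple of $\boldsymbol\Upsilon_q^{n+1}+\boldsymbol\Upsilon_q^n$, telescoping plus coercivity and Corollary~\ref{cor:estimate_interior}, then \eqref{eq:error:CN}, the elliptic estimates, and maximisation in $m$ without Gr\"onwall. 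Your explicit identification of the weight $\tfrac{\Delta t}{\tau_q\varphi_q}$ that makes the $\boldsymbol\varpi$--$\boldsymbol\Upsilon_q$ coupling cancel is exactly the (implicit) normalisation behind the paper's inequality \eqref{pf:s2:error:eq2}.
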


\begin{proof}
We follow similar steps as in the proof of Lemma \ref{lemma:vector:dgfem:error:s1} for the displacement form
to show this claim. By averaging at $t=t_{n+1}$ and $t=t_n$, subtracting \eqref{eq:vector:dgfem:s2:primal1} from \eqref{eq:vector:dgfem:s2e1} with $\boldsymbol{v}=2(\boldsymbol{\chi}^{n+1}-\boldsymbol{\chi}^n)$, and subtracting
\eqref{eq:vector:dgfem:s2:primal2} from \eqref{eq:vector:dgfem:s2e2}, with
$\boldsymbol{v}=2(\boldsymbol{\Upsilon}^{n+1}_q+\boldsymbol{\Upsilon}^{n}_q)$ for each $n$ and $q$,
summing over $n=0,\ldots,m-1$ for $m\leq N$, employing summation by parts, recalling the coercivity
of $\DGnorm{\cdot}{\cdot}$, and using Galerkin orthogonality, we eventually get,
\begin{align}
{\rho}&\llnorm{\boldsymbol \varpi^{m}}^2+{\varphi_0}\enorm{\boldsymbol \chi^{m}}^2\nonumber+\sum_{q=1}^{N_\varphi}\frac{\kappa}{\varphi_q }\enorm{\boldsymbol \Upsilon_q^{m}}^2\nonumber+{\Delta t}\sum_{n=0}^{m-1}\sum_{q=1}^{N_\varphi}\frac{\kappa}{\tau_q\varphi_q}\enorm{\boldsymbol \Upsilon_q^{n+1}+\boldsymbol \Upsilon_q^n}^2\nonumber\\
&+\frac{\Delta t}{2}\sum_{n=0}^{m-1}\jump{\boldsymbol \varpi^{n+1}+\boldsymbol \varpi^n}{\boldsymbol \varpi^{n+1}+\boldsymbol \varpi^n}\nonumber\\
\leq&\bigg|{\rho}\llnorm{\boldsymbol \varpi^0}^2+{\varphi_0}\enorm{\boldsymbol \chi^0}^2\nonumber+{\rho}\sum_{n=0}^{m-1}\Lnorm{\dot{\boldsymbol \theta}^{n+1}-\dot{\boldsymbol \theta}^n}{\boldsymbol \varpi^{n+1}+\boldsymbol \varpi^n}\\&+{\rho}{\Delta t}\sum_{n=0}^{m-1}\Lnorm{\boldsymbol {\mathcal{E}}_1^n}{\boldsymbol \varpi^{n+1}+\boldsymbol \varpi^n}\nonumber-2{\rho}\sum_{n=0}^{m-1}\Lnorm{\dot{\boldsymbol \theta}^{n+1}-\dot{\boldsymbol \theta}^n}{\boldsymbol {\mathcal{E}}_2^n}\\&-2{\rho}\sum_{n=0}^{m-1}\Lnorm{\dot{\boldsymbol \theta}^{n+1}-\dot{\boldsymbol \theta}^n}{\boldsymbol {\mathcal{E}}_3^n}-2{\rho}{\Delta t}\sum_{n=0}^{m-1}\Lnorm{\boldsymbol {\mathcal{E}}_1^n}{\boldsymbol {\mathcal{E}}_2^n}\nonumber-2{\rho}{\Delta t}\sum_{n=0}^{m-1}\Lnorm{\boldsymbol {\mathcal{E}}_1^n}{\boldsymbol {\mathcal{E}}_3^n}\\&+{\Delta t}\sum_{n=0}^{m-1}\sum_{q=1}^{N_\varphi}\frac{1}{\varphi_q}\DGnorm{\boldsymbol E^n_q}{\boldsymbol \Upsilon_q^{n+1}+\boldsymbol \Upsilon_q^n}-{\Delta t}\sum_{n=0}^{m-1}\sum_{q=1}^{N_\varphi}\DGnorm{\boldsymbol {\mathcal{E}}_3^n}{\boldsymbol \Upsilon_q^{n+1}+\boldsymbol \Upsilon_q^n}\nonumber\\
&+2\varphi_0\sume\{\ushort{\boldsymbol D}\ushort{\boldsymbol \varepsilon}(\boldsymbol  {\chi}^m)\}:\njump{\boldsymbol{\chi}^m}de\bigg|,\label{pf:s2:error:eq2}\end{align}
where
\[
\boldsymbol E_q(t):=\frac{\dot{\boldsymbol \zeta}_q(t+\Delta t)+\dot{\boldsymbol \zeta}_q(t)}{2}-\frac{\boldsymbol \zeta_q(t+\Delta t)-\boldsymbol \zeta_q(t)}{\Delta t}\textrm{ for each }q.
\]
%Since $\DGnorm{\cdot}{\cdot}$ is coercive, \eqref{pf:s2:error:eq1} implies
We complete the proof by using the same techniques and results as used earlier: the Cauchy-Schwarz
and Young's inequalities, integration and summation by parts, \eqref{eq:vector:dg:bdd2}, \eqref{vector:appl2} and
\eqref{eq:error:CN}.
\end{proof}

We can now use Lemmas~\ref{lemma:vector:dgfem:error:s1} and~\ref{lemma:vector:dgfem:error:s2} to
prove the following \textit{a priori} error estimates.

\begin{thm}[Error analysis]\label{thm:vector:dgfem:fully:errorestimates}
Suppose the discrete solutions in $\Dgvh$ satisfy either \textnormal{$\textbf{(D)}^h$} or
\textnormal{$\textbf{(V)}^h$} for integer $s\geq2,$ and also that
$\boldsymbol u\in H^2(0,T;\boldsymbol C^2({\Omega}))\cap W^1_\infty(0,T;\Dgv)\cap H^4(0,T;\Dgv)$.
If Lemmas~\textnormal{\ref{lemma:vector:dgfem:error:s1}}
and~\textnormal{\ref{lemma:vector:dgfem:error:s2}} hold, then we have
\begin{alignat*}{2}
&\max_{1\leq n\leq N}\enorm{\boldsymbol u(t_n)-\boldsymbol U^n_h}
&& \leq
CT\lVert\boldsymbol{u}\rVert_{H^4(0,T;\dgv)}(h^{r-1}+\Delta t^2),
\\
&\max_{1\leq n\leq N}\llnorm{\boldsymbol u(t_n)-\boldsymbol U^n_h}
&& \leq
CT\lVert\boldsymbol{u}\rVert_{H^4(0,T;\dgv)}(h^{r}+\Delta t^2),
\\
&\max_{1\leq n\leq N}\enorm{\dot{\boldsymbol u}(t_n)-\boldsymbol W^n_h}
&& \leq
CT\lVert\boldsymbol{u}\rVert_{H^4(0,T;\dgv)}(h^{r-1}+\Delta t^2),
\\
\text{and }\qquad
&\max_{1\leq n\leq N}\llnorm{\dot{\boldsymbol u}(t_n)-\boldsymbol W^n_h}
&& \leq CT\lVert\boldsymbol{u}\rVert_{H^4(0,T;\dgv)}(h^{r}+\Delta t^2),
\end{alignat*}
where $r=\min(s+1,k)$ and $C$ is a positive constant that is independent of $h$, $\Delta t$, $T$
and the exact and discrete solutions, but depends on $\Omega$, $\partial \Omega$, and the
material coefficients. Moreover, the initial discrete errors are given by
\[\enorm{\boldsymbol u_0-\boldsymbol U^0_h}\leq
Ch^{r-1}\vertiii{\boldsymbol{u}_0}_{\dgv},\qquad\text{and}\qquad\llnorm{\boldsymbol w_0-\boldsymbol W^0_h}\leq
Ch^{r}\vertiii{\boldsymbol{w}_0}_{\dgv}.
\]
\end{thm}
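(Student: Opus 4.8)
The plan is to reduce the theorem to Lemmas~\ref{lemma:vector:dgfem:error:s1} and~\ref{lemma:vector:dgfem:error:s2} by the usual elliptic-projection splitting. Since $s\ge2$ we have $s>3/2$, so $\dgelliptic$ and the elliptic error estimates \eqref{vector:appDG}--\eqref{vector:appl2} are available, and because $\dgelliptic$ is linear and built from the time-independent form $a(\cdot,\cdot)$ it commutes with $\partial_t$; hence $\dot{\boldsymbol\theta}=\dot{\boldsymbol u}-\dgelliptic\dot{\boldsymbol u}$ and, with $\boldsymbol\theta,\boldsymbol\chi,\boldsymbol\varpi$ as in the opening of Section~\ref{sec:erroranalysis}, $\boldsymbol u(t_n)-\boldsymbol U^n_h=\boldsymbol\theta^n-\boldsymbol\chi^n$ and $\dot{\boldsymbol u}(t_n)-\boldsymbol W^n_h=\dot{\boldsymbol\theta}^n-\boldsymbol\varpi^n$. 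The spatial parts are controlled directly: \eqref{vector:appDG} and \eqref{vector:appl2}, applied to $\boldsymbol u(t_n)$ and (by the commutation) to $\dot{\boldsymbol u}(t_n)$, give $\enorm{\boldsymbol\theta^n}+\enorm{\dot{\boldsymbol\theta}^n}\le Ch^{r-1}(\vertiii{\boldsymbol u(t_n)}_{\dgv}+\vertiii{\dot{\boldsymbol u}(t_n)}_{\dgv})$ and $\llnorm{\boldsymbol\theta^n}+\llnorm{\dot{\boldsymbol\theta}^n}\le Ch^{r}(\vertiii{\boldsymbol u(t_n)}_{\dgv}+\vertiii{\dot{\boldsymbol u}(t_n)}_{\dgv})$. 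Since $\boldsymbol u\in H^4(0,T;\dgv)$ embeds into $C^1([0,T];\dgv)$, these pointwise-in-time norms are bounded uniformly in $n$ by $C\lVert\boldsymbol u\rVert_{H^4(0,T;\dgv)}$.

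With this in hand, the two displacement estimates are immediate. For the energy norm, the triangle inequality gives $\enorm{\boldsymbol u(t_n)-\boldsymbol U^n_h}\le\enorm{\boldsymbol\theta^n}+\enorm{\boldsymbol\chi^n}$; bounding $\max_n\enorm{\boldsymbol\chi^n}$ by $CT\lVert\boldsymbol u\rVert_{H^4(0,T;\dgv)}(h^{r}+\Delta t^2)$ via Lemma~\ref{lemma:vector:dgfem:error:s1} (for $\textbf{(D)}^h$) or Lemma~\ref{lemma:vector:dgfem:error:s2} (for $\textbf{(V)}^h$), and using $h^r\le h^{r-1}$ for $h\le1$, yields the claimed $h^{r-1}+\Delta t^2$ bound. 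For the $L_2$ norm the lemmas only supply the \emph{energy} norm of $\boldsymbol\chi^n$, so I would use that $\boldsymbol\chi^n\in\Dgvh$: Poincar\'e's inequality \eqref{dgpoincare} followed by Korn's inequality \eqref{eq:kordDG}, both valid since $\beta_0(d-1)\ge1$ and $|e|\le1$ for every edge, give $\llnorm{\boldsymbol\chi^n}\le C\enorm{\boldsymbol\chi^n}$, hence $\llnorm{\boldsymbol u(t_n)-\boldsymbol U^n_h}\le\llnorm{\boldsymbol\theta^n}+C\enorm{\boldsymbol\chi^n}=O(h^{r}+\Delta t^2)$. The $L_2$ estimate for the velocity follows in exactly the same way, now using the bound on $\max_n\llnorm{\boldsymbol\varpi^n}$ that the lemmas state explicitly.

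The one genuinely nontrivial step, and the step I expect to be the main obstacle, is the energy-norm estimate for the velocity: $\enorm{\dot{\boldsymbol u}(t_n)-\boldsymbol W^n_h}\le\enorm{\dot{\boldsymbol\theta}^n}+\enorm{\boldsymbol\varpi^n}$ now requires the \emph{energy} norm of $\boldsymbol\varpi^n$, which is not among the quantities recorded in Lemmas~\ref{lemma:vector:dgfem:error:s1}--\ref{lemma:vector:dgfem:error:s2}. The energy argument behind those lemmas (test the error momentum equation with $2(\boldsymbol\chi^{n+1}-\boldsymbol\chi^n)$) naturally produces the mass term $\llnorm{\boldsymbol\varpi^m}$ and the stiffness term $\enorm{\boldsymbol\chi^m}$, but not $\enorm{\boldsymbol\varpi^m}$, and a direct telescoping of \eqref{eq:vector:dgfem:dgr} is not sharp enough. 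Instead I would differentiate the scheme in discrete time --- subtract consecutive instances of the discrete momentum equation, divide by $\Delta t$, and invoke \eqref{r1} --- and run the same energy argument on the resulting Crank--Nicolson-type system, in which $\boldsymbol W^n_h$ plays the role of the displacement variable; this is where the full $H^4$-in-time regularity of $\boldsymbol u$ is used, and the new consistency terms are controlled by \eqref{eq:error:CN}. The outcome is $\max_n\enorm{\boldsymbol\varpi^n}\le CT\lVert\boldsymbol u\rVert_{H^4(0,T;\dgv)}(h^{r}+\Delta t^2)$, which combines with $\enorm{\dot{\boldsymbol\theta}^n}=O(h^{r-1})$ just as in the displacement case.

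Finally, the initial-error bounds come straight from the definitions of the starting iterates. Equation \eqref{eq:vector:dgfem:s1e3} reads $a(\boldsymbol U^0_h,\boldsymbol v)=a(\boldsymbol u_0,\boldsymbol v)$ for all $\boldsymbol v\in\Dgvh$, so by coercivity (Theorem~\ref{thm:coer_conti}) $\boldsymbol U^0_h=\dgelliptic\boldsymbol u_0$, whence $\boldsymbol u_0-\boldsymbol U^0_h=\boldsymbol\theta(0)$ and \eqref{vector:appDG} gives $\enorm{\boldsymbol u_0-\boldsymbol U^0_h}\le Ch^{r-1}\vertiii{\boldsymbol u_0}_{\dgv}$. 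Equation \eqref{eq:vector:dgfem:s1e4} makes $\boldsymbol W^0_h$ the $L_2(\Omega)$-orthogonal projection of $\boldsymbol w_0$ onto $\Dgvh$, so $\llnorm{\boldsymbol w_0-\boldsymbol W^0_h}=\inf_{\boldsymbol v\in\Dgvh}\llnorm{\boldsymbol w_0-\boldsymbol v}\le\llnorm{\boldsymbol w_0-\dgelliptic\boldsymbol w_0}\le Ch^{r}\vertiii{\boldsymbol w_0}_{\dgv}$ by \eqref{vector:appl2}. Thus essentially all of the work lives in Lemmas~\ref{lemma:vector:dgfem:error:s1}--\ref{lemma:vector:dgfem:error:s2}; assembling the theorem is the triangle inequality together with the elliptic estimates \eqref{vector:appDG}--\eqref{vector:appl2}, the finite-element norm equivalence \eqref{dgpoincare}--\eqref{eq:kordDG}, a time-Sobolev embedding, and --- for the velocity energy norm --- the differentiated-scheme refinement above.
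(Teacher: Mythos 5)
Your proposal is correct in its overall architecture and coincides with the paper's proof for everything except one step: the splitting $\boldsymbol u(t_n)-\boldsymbol U^n_h=\boldsymbol\theta^n-\boldsymbol\chi^n$, the use of Lemmas~\ref{lemma:vector:dgfem:error:s1}--\ref{lemma:vector:dgfem:error:s2} together with \eqref{vector:appDG}--\eqref{vector:appl2}, the Poincar\'e bound $\llnorm{\boldsymbol\chi^n}\le C\enorm{\boldsymbol\chi^n}$ for the $L_2$ displacement estimate, and the identification of $\boldsymbol U^0_h=\dgelliptic\boldsymbol u_0$ and of $\boldsymbol W^0_h$ as the $L_2$ projection for the initial errors are all exactly what the paper does. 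Where you diverge is the velocity energy norm, which you rightly flag as the only nontrivial assembly step. The paper does \emph{not} differentiate the scheme in discrete time; it bounds $\enorm{\boldsymbol\varpi^n}$ by the inverse inequality \eqref{dginverse} plus the jump term,
\begin{equation*}
\enorm{\boldsymbol\varpi^n}\le Ch^{-1}\llnorm{\boldsymbol\varpi^n}+\left(\jump{\boldsymbol{\varpi}^n}{\boldsymbol{\varpi}^n}\right)^{1/2},
\end{equation*}
and then invokes the $L_2$ bound on $\boldsymbol\varpi^n$ and the observation that the jump-penalty contribution is already controlled in \eqref{eq:vector:dgfem:error:s1:e3} and \eqref{pf:s2:error:eq2}. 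This is a one-line argument needing no further regularity, but it converts $O(h^{r}+\Delta t^2)$ into $O(h^{r-1}+h^{-1}\Delta t^2)$, so the stated $h^{r-1}+\Delta t^2$ bound implicitly absorbs an $h^{-1}\Delta t^2$ term (a mild space--time coupling); also, strictly speaking the lemmas control $\Delta t\sum_n\jump{\boldsymbol\varpi^{n+1}+\boldsymbol\varpi^n}{\boldsymbol\varpi^{n+1}+\boldsymbol\varpi^n}$ rather than the pointwise-in-$n$ jump term the argument uses. Your alternative --- differencing consecutive instances of the discrete momentum equation and rerunning the energy argument with $\boldsymbol W^n_h$ in the role of the displacement --- avoids the inverse-inequality loss entirely and would give a genuinely uniform $O(h^{r}+\Delta t^2)$ bound for $\enorm{\boldsymbol\varpi^n}$, but it costs more: the differenced consistency terms involve quantities like $\boldsymbol{\mathcal E}_1^{n+1}-\boldsymbol{\mathcal E}_1^n$ and $(F_d^{n+2}-F_d^n)/\Delta t$, whose optimal control requires one more time derivative of $\boldsymbol u$ (and of the data) than the stated $H^4(0,T;\dgv)$ hypothesis supplies, so you would either need to strengthen the regularity assumption or handle those terms suboptimally. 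In short: your route trades the paper's hidden $h^{-1}\Delta t^2$ for an extra derivative in time; both lead to the stated estimate, and the rest of your argument matches the paper's.
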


\begin{proof}
Using the triangle inequality with Lemmas~\ref{lemma:vector:dgfem:error:s1}
and~\ref{lemma:vector:dgfem:error:s2}, and the DG elliptic error estimates, we get,
for any $n\in\{1,\ldots,N\}$, that
\begin{align}
\enorm{\boldsymbol u(t_n)-\boldsymbol U^n_h}\leq&\enorm{\boldsymbol \theta(t_n)}+\enorm{\boldsymbol  \chi^n}\leq CT\lVert\boldsymbol{u}\rVert_{H^4(0,T;\dgv)}(h^{r-1}+\Delta t^2).\label{pf:thm:error:eq1}
\end{align}
Recalling Poincar\'e's inequality \eqref{dgpoincare} indicates that
$\llnorm{\boldsymbol{v}}\leq C\enorm{\boldsymbol{v}}$ for any ${\boldsymbol{v}}\in\Dgvh$. Hence we have
\begin{align}
\llnorm{\boldsymbol u(t_n)-\boldsymbol U^n_h}
% \leq&
% \llnorm{\boldsymbol \theta(t_n)}+\llnorm{\boldsymbol  \chi^n}
% \nonumber\\
% \leq&
\leq\llnorm{\boldsymbol \theta(t_n)}+C\enorm{\boldsymbol  \chi^n}
% \nonumber\\
% \leq&
\leq 
CT\lVert\boldsymbol{u}\rVert_{H^4(0,T;\dgv)}(h^{r-1}+\Delta t^2).
\label{pf:thm:error:eq2}
\end{align}
In a similar way, we have
\begin{align}
\llnorm{\dot{\boldsymbol u}(t_n)-\boldsymbol W^n_h}
\leq&
\llnorm{\dot{\boldsymbol \theta}(t_n)}+\llnorm{\boldsymbol  \varpi^n}
% \nonumber\\
% \leq&
\leq CT\lVert\boldsymbol{u}\rVert_{H^4(0,T;\dgv)}(h^{r}+\Delta t^2).
\label{pf:thm:error:eq3}
\end{align}
On the other hand, as seen in \eqref{eq:vector:dgfem:error:s1:e3} and \eqref{pf:s2:error:eq2},
the jump penalty term of $\boldsymbol{\varpi}^n$ is also bounded by the same upper bound. In other words, we have
\[
\left(\jump{\boldsymbol{\varpi}^n}{\boldsymbol{\varpi}^n}\right)^{1/2}=O(h^r+\Delta t^2).
\]
Using this argument with the inverse inequality \eqref{dginverse}, we can derive energy norm error estimates
for the velocity vector field and get,
\begin{align}
    \enorm{\dot{\boldsymbol u}(t_n)-\boldsymbol W^n_h}
    \leq&
    \enorm{\dot{\boldsymbol \theta}(t_n)}+\enorm{\boldsymbol  \varpi^n}
%    \nonumber\\
    \leq
%    &
    \enorm{\dot{\boldsymbol \theta}(t_n)}+Ch^{-1}\llnorm{\boldsymbol  \varpi^n}
    +\left(\jump{\boldsymbol{\varpi}^n}{\boldsymbol{\varpi}^n}\right)^{1/2}
    \nonumber\\
    \leq&
    CT\lVert\boldsymbol{u}\rVert_{H^4(0,T;\dgv)}(h^{r-1}+\Delta t^2)
    \label{pf:thm:error:eq4}.
\end{align}
Since $n$ is arbitrary and the the right hand sides of \eqref{pf:thm:error:eq1}-\eqref{pf:thm:error:eq4} are independent of $n$, the proof is completed. Furthermore, to show the discrete errors for $n=0$, we want to use the triangular inequalities, DG elliptic error estimates and initial (DG elliptic/$L_2$) projections, e.g. \eqref{eq:vector:dgfem:s1e3} and \eqref{eq:vector:dgfem:s1e4}. Hence we have
\begin{align*}
    \enorm{\boldsymbol u_0-\boldsymbol U^0_h}=\enorm{\boldsymbol \theta(0)-\boldsymbol \chi^0}\leq\enorm{\boldsymbol \theta(0)}+\enorm{\boldsymbol \chi^0}=\enorm{\boldsymbol \theta(0)}\leq
Ch^{r-1}\vertiii{\boldsymbol{u}_0}_{\dgv},
\end{align*}by the fact $\enorm{\boldsymbol \chi^0}=0$ and \eqref{vector:appDG}. Also, since $\llnorm{\boldsymbol \varpi^0}\leq\llnorm{\dot{\boldsymbol \theta}(0)}$, \eqref{vector:appl2} leads to
\begin{align*}
    \llnorm{\boldsymbol w_0-\boldsymbol W^0_h}=\llnorm{\dot{\boldsymbol \theta}(0)-\boldsymbol \varpi^0}\leq\llnorm{\dot{\boldsymbol \theta}(0)}+\llnorm{\boldsymbol \varpi^0}\leq2\llnorm{\dot{\boldsymbol \theta}(0)}\leq
Ch^{r}\vertiii{\boldsymbol{w}_0}_{\dgv}.
\end{align*}
\end{proof}

\section{Numerical Experiments}\label{sec:numerical experiments}
In this section we use the FEniCS environment, see \cite{alnaes2015fenics} and
\url{https://fenicsproject.org}, to give the results of some numerical experiments
that were carried out to demonstrate the convergence rates proven above:
we present tables of numerical errors, as well as convergence rates.
The python codes are available at Jang's Github
(\url{https://github.com/Yongseok7717/Visco_Int_CG}),
and can be used to reproduce the results that are given below.
Alternatively, these results can also be reproduced by using docker. In this case 
the commands to run are:
\begin{verbatim}
docker pull jangyour/fenics_fem
docker run -ti jangyour/fenics_fem
cd  visco/Int_DG; . main.sh
\end{verbatim}

\paragraph*{An exact solution}
As is common, we use an artificial, or \emph{manufactured}, exact solution in order to
calculate the errors. Let this manufactured strong solution be
\[\boldsymbol u(x,y,t)=\left[\begin{array}{c}
      xye^{1-t}\\
      \cos(t)\sin(xy)
\end{array}\right]\in C^{\infty}(0,T;\boldsymbol C^{\infty}(\Omega))\]where $\Omega=[0,1]\times[0,1]$ and $T=1$. We use two internal variables with coefficients and time delays
$\varphi_0=0.5,\ \varphi_1=0.1,\ \varphi_2=0.4,\ \tau_1=0.5,\ \tau_2=1.5$ and (because
we are not constrained by the physical reality here) we assume for simplicity an
identity fourth order tensor as our $\ushort{\boldsymbol{D}}$
so that $\ushort{\boldsymbol{D}}\ushort{\boldsymbol{\varepsilon}}=\ushort{\boldsymbol{\varepsilon}}$.
We define the Dirichlet boundary as
\[
\Gamma_D=\{(x,y)\in\partial\Omega\ |\ x=0\textrm{ or }y=0\}
\]
and then $\boldsymbol{u}=\boldsymbol{0}$ on $\Gamma_D$, and the other data are readily computed.

\paragraph*{Numerical results}
We present numerical errors to demonstrate the convergence rates given by the theorems and for this
we define $\boldsymbol{e}^n=\boldsymbol{u}(t_n)-\boldsymbol{U}_h^n$ 
and $\tilde{\boldsymbol{e}}^n=\boldsymbol{w}(t_n)-\boldsymbol{W}_h^n$ for $n=0,\ldots,N$. According to Theorem \ref{thm:vector:dgfem:fully:errorestimates}, we have the $L_2$ norm error as well as DG energy norm error such that
\[
\enorm{\boldsymbol{e}^n}=O(h^{k}+\Delta t^{2}),\text{ and }\llnorm{{\boldsymbol{e}}^n}=O(h^{k+1}+\Delta t^{2}),
\]
since $s=\infty$. Also, we have the same rates of convergence for the velocity error.
By recalling Korn's inequality \eqref{eq:kordDG}, we note that the broken $H^1$ norm is bounded by the energy norm.
Therefore, we have broken $H^1$ error estimates such that
\[
\hsnorm{\boldsymbol{e}^n}+\hsnorm{\tilde{\boldsymbol{e}}^n}\leq C(h^{k}+\Delta t^{2}),
\]
for some positive constant $C$. Since the DG energy norm depends on the penalty parameters
$\alpha_0$ and $\beta_0$, we hereafter consider the errors in the broken $H^1$ norm instead.

Firstly, we set $\Delta t=h$ and define the numerical convergence rate $d_c$ by 
\[
d_c\ =\ \frac{\log(\text{error of }h_1)-\log(\text{error of }h_2)}{\log(h_1)-\log(h_2)}.
\]
We can see in Figure \ref{fig:convergentOrder} that $d_c$ is 1 for both the displacement and velocity form errors when $k=1$,
and that it suggests quadratic convergence orders for $L_2$ norm error or when $k=2$, regardless of which internal variable form
is used.

\begin{figure}[ht]
	\centering
	\includegraphics[width=0.65\linewidth]{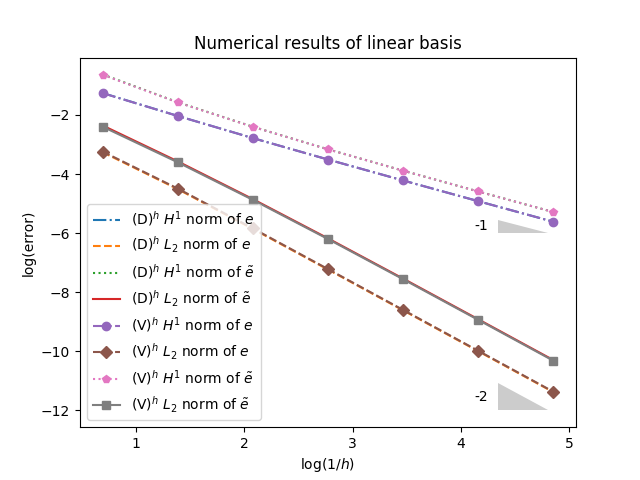}
	\includegraphics[width=0.65\linewidth]{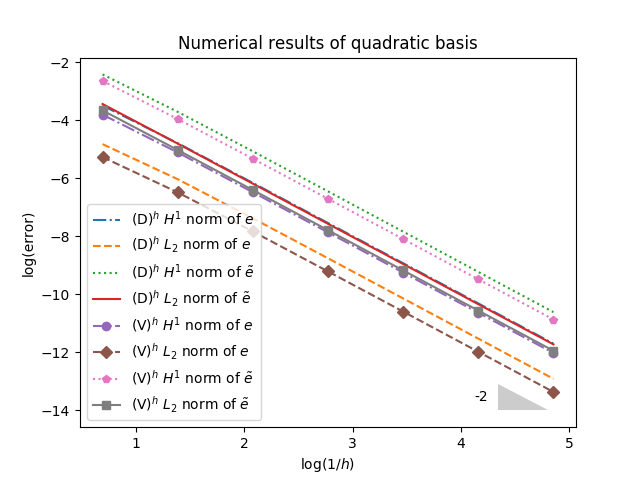}
	\caption{Numerical convergence order: \textbf{linear} (top) and \textbf{quadratic} (bottom) polynomial basis}
	\label{fig:convergentOrder}
\end{figure}

Secondly, if we take $\Delta t \ll h$ we can render the `time error' negligible in comparison to
the `space error' and isolate the spatial convergence rates.
Some example results for this are shown in Tables \ref{table:fixedTime:H1} and \ref{table:fixedTime:L2},
where we see that the numerical rates follow the spatial convergence order of $d_c=k$ (resp. $d_c=k+1$) in the $H^1$ (resp. $L_2$) norm.
We can also see that there is no significant difference between the numerical schemes $\textbf{(D)}^h$ and
$\textbf{(V)}^h$, in terms of convergence rates as well as the size of the errors.

Thirdly, in Table \ref{table:fixed_h} we give some results for when the spatial error became negligibly small to the temporal error, to observe second order accuracy in time regardless of spatial norms, displacement/velocity fields and internal variable forms.
All of these results are consistent with the claims in the theorems.

\begin{table}[H]
	\centering
	\begin{tabular}{cc|cc|cc}
			\multirow{2}{*}{$k$}&\multirow{2}{*}{$h$}&\multicolumn{2}{c|}{\textbf{Displacement form}}&\multicolumn{2}{c}{\textbf{Velocity form}}\\		
		&&$\hsnorm{ e^N}$&$\hsnorm{\tilde e^N}$ &$\hsnorm{ e^N}$ &$\hsnorm{\tilde e^N}$ \\\midrule
		\multirow{4}{*}{1}
		&1/4&1.298e-01&1.951e-01&1.298e-01&1.951e-01\\
		&1/8&6.177e-02 (1.07)&8.741e-02 (1.16)&6.177e-02 (1.07)&8.741e-02 (1.16)\\
		&1/16&2.993e-02 (1.05)&4.130e-02 (1.08)&2.993e-02 (1.05)&4.130e-02 (1.08)\\
		&1/32&1.473e-02 (1.02)&2.001e-02 (1.04)&1.473e-02 (1.02)&2.001e-02 (1.04)\\\hline
	\multirow{4}{*}{2}
		&1/4&3.168e-03&4.996e-03&3.168e-03&4.996e-03\\
		&1/8&8.030e-04 (1.98)&1.284e-03 (1.96)&8.030e-04 (1.98)&1.284e-03 (1.96)\\
		&1/16&2.008e-04 (2.00)&3.256e-04 (1.98)&2.008e-04 (2.00)&3.256e-04 (1.98)\\
		&1/32&5.010e-05 (2.00)&8.206e-05 (1.99)&5.010e-05 (2.00)&8.206e-05 (1.99)\\
	\end{tabular}
	\caption{$H^1$ norm of numerical errors (orders) when $\Delta t=1/2048$}\label{table:fixedTime:H1}
\end{table} \vspace{-1.2cm}
\begin{table}[H]
	\centering
	\begin{tabular}{cc|cc|cc}
			\multirow{2}{*}{$k$}&\multirow{2}{*}{$h$}&\multicolumn{2}{c|}{\textbf{Displacement form}}&\multicolumn{2}{c}{\textbf{Velocity form}}\\		
		&&$\llnorm{ e^N}$&$\llnorm{\tilde e^N}$ &$\llnorm{ e^N}$ &$\llnorm{\tilde e^N}$ \\\midrule
		\multirow{4}{*}{1}
		&1/4&1.067e-02&2.293e-02&1.067e-02&2.293e-02\\
		&1/8&2.808e-03 (1.93)&6.691e-03 (1.78)&2.808e-03 (1.93)&6.691e-03 (1.78)\\
		&1/16&7.094e-04 (1.98)&1.182e-03 (1.88)&7.094e-04 (1.98)&1.182e-03 (1.88)\\
		&1/32&1.781e-04 (1.99)&4.686e-04 (1.95)&1.781e-04 (1.99)&4.686e-04 (1.95)\\\hline
	\multirow{4}{*}{2}
		&1/4&8.362e-05&1.496e-04&8.362e-05&1.496e-04\\
		&1/8&1.011e-05 (3.05)&1.861e-05 (3.01)&1.011e-05 (3.05)&1.861e-05 (3.01)\\
		&1/16&1.231e-06 (3.04)&2.315e-06 (3.01)&1.231e-06 (3.04)&2.315e-06 (3.01)\\
		&1/32&1.515e-07 (3.02)&2.906e-07 (3.00)&1.514e-07 (3.02)&2.902e-07 (3.00)\\
	\end{tabular}
	\caption{$L_2$ norm of numerical errors (orders) when $\Delta t=1/2048$}\label{table:fixedTime:L2}
\end{table} \vspace{-1.2cm}
\begin{table}[H]
	\centering
	\begin{tabular}{cc|cc|cc}
			\multirow{2}{*}{Norm\kern-0.8em}&\multirow{2}{*}{$\Delta t$\kern-0.5em}&\multicolumn{2}{c|}{\textbf{Displacement form}}&\multicolumn{2}{c}{\textbf{Velocity form}}\\		
		&&${ e^N}$&${\tilde e^N}$ &${ e^N}$ &${\tilde e^N}$ \\\midrule
		\multirow{4}{*}{$H^1$}&1/2&2.696e-02&9.579e-02&1.766e-02&7.348e-02\\
		&1/4&7.445e-03 (1.86)&2.461e-02 (1.96)&4.879e-03 (1.86)&1.880e-02 (1.97)\\
		&1/8&1.894e-03 (1.97)&6.169e-03 (2.00)&1.2429e-03 (1.97)&4.712e-03 (2.00)\\
		&1/16&4.747e-04 (2.00)&1.549e-04 (2.00)&3.117e-04 (2.00)&1.181e-03 (2.00)\\\hline
		\multirow{4}{*}{$L_2$}&1/2&8.121e-02&3.222e-02&5.256e-03&2.586e-02\\
		&1/4&2.410e-03 (1.75)&8.221e-03 (1.97)&1.534e-03 (1.78)&6.601e-03 (1.97)\\
		&1/8&6.252e-04 (1.95)&2.067e-03 (1.99)&3.974e-04 (1.95)&1.659e-03 (1.99)\\
		&1/16&1.577e-04 (1.99)&5.178e-04 (2.00)&1.001e-04 (1.99)&4.155e-04 (2.00)\\
	\end{tabular}
	\caption{Time convergence errors (orders) for fixed $h=1/128$ and $k=2$}\label{table:fixed_h}
\end{table}

\paragraph*{Penalty parameters}
For well-posedness as well as optimal convergence, we need to have `large enough' penalty parameters and following
studies of elasticity problems, \cite{houston2006hp,wihler2006locking}, we took $\alpha_0=10$ and $\beta_0=1$ for
the computations above. If $\alpha_0$ is small our numerical schemes will lose stability and convergence. 
This is illustrated in Table \ref{table:smallalpha} where we see that the numerical errors diverged
when $\alpha_0=0.1$. On the other hand, in order to observe the requirements of the stability and error analyses above
we need $\beta_0(d-1)\geq 1$ but, a too large $\beta_0$ will result in the assembled global matrix being
ill-conditioned. Therefore, taking $\beta_0=1/(d-1)$ seems to be the most reasonable choice.
In fact, the condition number of the assembled system matrix in the interior penalty method is of order $O(h^{-(\beta_0+1)})$ \cite{jang2020spatially,DG}, and hence we may
encounter difficulty in solving linear systems for large $\beta_0$ if we use iterative solvers.
For more details, we refer to \cite{jang2020spatially,DG}.

\begin{table}[H]
	\centering
	\begin{tabular}{c|ccc|ccc}
	&\multicolumn{3}{c|}{\textbf{Displacement form}}&\multicolumn{3}{c}{\textbf{Velocity form}}\\
	$h$&$1/2$&$1/4$&$1/8$&$1/2$&$1/4$&$1/8$\\
		\midrule
		Error&2.262&9.527e+03&1.262e+14&2.286&9.364e+03&1.266e+14\\
	\end{tabular}
	\caption{$\llnorm{ e^N}$ for $\alpha_0=0.1$ where $\Delta t= h$ and $k=1$}
	\label{table:smallalpha}
\end{table}

\section{Conclusion and Discussion}\label{sec:conclusion}
We have presented an \textit{a priori} stability and error analysis of the DGFEM for
linear dynamic viscoelasticity with two types of internal variables used to replace
the fading memory Volterra integral. We have given illustrative numerical results, 
explained how these results can be reproduced, and we have observed that both of the schemes
(for each type of internal variable) behaves similarly. The numerics are
consistent with the predicted convergence rates.

The main achievement of this work is that we have been able to give the stability
and error bounds without recourse to Gr\"onwall's inequality, with the result that the
stability and error bounds depend explicitly on the final time linearly, and not exponentially,
(along with the time dependence in the exact solution norms).
This is a significant improvement over the estimates given in \cite{DGV}.
Moreover, the schemes presented here use displacement or velocity
internal variables rather than stresses, and so --- compared to \cite{DGV} --- for the schemes
presented here, there is a modest reduction in the computer memory requirements. 

We assumed throughout that $\varphi_0>0$ in \eqref{eq:stressrelax:exponential}, which is 
reasonable for materials with long term stiffness (`solids' as defined by Golden and Graham
in \cite{golden2013boundary}).
However, for `fluids', under the specific condition $\varphi_0=0$, the velocity form will yield an
interesting alternative model.
%reduced order model and hence it will give memory saving advantage.
In particular, with $\boldsymbol{u}_0=\boldsymbol{0}$ for  simplicity,
we can rewrite \eqref{primal:internalVelo} as  
\[
\rho \dot{\boldsymbol w}-\sum_{q=1}^{N_\varphi}\nabla\cdot\left(\ushort{{\boldsymbol {D}}}\ushort{{\boldsymbol\varepsilon}}\big(\boldsymbol{\zeta}_q\big)\right)=\boldsymbol f,
\]
where we have set $\dot{\boldsymbol{u}}=\boldsymbol{w}$. The internal variable still evolves according to the
ODE, \eqref{primal:ode:velo}, which holds at every point in space, and so in the numerical scheme there is no need
to introduce displacement. This results in a reduced memory requirement.
We plan to explore this interesting variant in a future study.

\bibliographystyle{acm}
\bibliography{ref}
\end{document}